\documentclass[leqno,11pt]{amsart}

\usepackage{geometry}



\usepackage[all,cmtip]{xy}
\usepackage{tikz-cd}

\usepackage[all]{xy} 



\usepackage{comment}

\usepackage{mathtools}
\usepackage{amsmath, amssymb, amsfonts, latexsym, mdwlist, amsthm}
\usepackage{subfig}
\usepackage{graphicx}
\usepackage{wrapfig}

\usepackage[bookmarks, colorlinks, breaklinks, pdftitle={},
pdfauthor={}]{hyperref}
\hypersetup{linkcolor=blue,citecolor=blue,filecolor=black,urlcolor=blue}




\usepackage{tikz}
\usetikzlibrary{calc,trees,positioning,arrows,chains,shapes.geometric,%
    decorations.pathreplacing,decorations.pathmorphing,shapes,%
    matrix,shapes.symbols}

\tikzset{
>=stealth',
  punktchain/.style={
    rectangle,
    rounded corners,
    draw=black, thick,
    minimum height=3em,
    text centered,
    on chain},
  line/.style={draw, thick, <-},
  element/.style={
    tape,
    top color=white,
    bottom color=blue!50!black!60!,
    minimum width=8em,
    draw=blue!40!black!90, very thick,
    text width=10em,
    minimum height=3.5em,
    text centered,
    on chain},
  every join/.style={->, thick,shorten >=1pt},
  decoration={brace},
  tuborg/.style={decorate},
  tubnode/.style={midway, right=2pt},
}

\usepackage{paralist}
\setdefaultenum{(a)}{(i)}{}{}
\usepackage{enumitem} 
\usepackage{graphicx}

\usetikzlibrary{patterns}

\renewcommand\_{^{}_}
\renewcommand\;{\hspace{.6pt}}
\newcommand\To{\longrightarrow}
\newcommand\into{\hookrightarrow}
\newcommand{\Into}{\ensuremath{\lhook\joinrel\relbar\joinrel\rightarrow}}
\newcommand\Onto{\longrightarrow\hspace{-5.5mm}\longrightarrow}
\newcommand\Mapsto{\ensuremath{\shortmid\joinrel\relbar\joinrel\rightarrow}}
\newcommand{\rt}[1]{\xrightarrow{\ #1\ }}
\newcommand\PP{\mathbb P}
\newcommand\C{\mathbb C}
\newcommand\Q{\mathbb Q}
\newcommand\R{\mathbb R}
\newcommand\N{\mathbb N}
\newcommand\Z{\mathbb Z}
\newcommand\J{\mathsf J\;}

\newcommand\cA{\mathcal A}
\newcommand\cD{\mathcal D}
\newcommand\cH{\mathcal H}
\newcommand\cO{\mathcal O}

\newcommand\cM{\mathcal M}

\newcommand\ch{\operatorname{ch}}

\newcommand\Hom{\operatorname{Hom}}
\newcommand\Pic{\operatorname{Pic}}
\newcommand\Ext{\operatorname{Ext}}
\newcommand\rk{\operatorname{rank}}
\newcommand\Coh{\operatorname{Coh}}
\newcommand\cok{\operatorname{coker}}
\newcommand\js{\operatorname{JS}}
\renewcommand\v{\mathsf v}
\newcommand\Ab{\mathcal A_{\;b}}
\newcommand\nubw{\nu\_{b,w}}
\newcommand\vno{v_{n_{\raisebox{-1pt}{\scalebox{0.6}{$0$}}}}}
\newcommand\im{\operatorname{im}}
\newcommand\ext{\mathcal E\hspace{-1pt}xt}
\renewcommand\hom{\mathcal H\hspace{-1pt}om}
\newcommand\udot{^\bullet}

\newcommand\onto{\to\hspace{-3mm}\to}
\newcommand\so{\operatorname{\Longrightarrow}}
\renewcommand\({\big(}
\renewcommand\){\big)}
\renewcommand\={\ =\ }
\newcommand\arXiv[1]{\href{http://arxiv.org/abs/#1}{arXiv:#1}}
\newcommand\mathAG[1]{\href{http://arxiv.org/abs/math/#1}{math.AG/#1}}

\newcommand\w{\operatorname{w}}

\newcommand\cl{\operatorname{cl}\;}

\newcommand\beq[1]{\begin{equation}\label{#1}}
\newcommand\eeq{\end{equation}}
\newcommand\beqa{\begin{eqnarray*}}
\newcommand\eeqa{\end{eqnarray*}}

\makeatletter
\newtheorem*{rep@theorem}{\rep@title}
\newcommand{\newreptheorem}[2]{%
\newenvironment{rep#1}[1]{%
 \def\rep@title{#2 \ref{##1}}%
 \begin{rep@theorem}}%
 {\end{rep@theorem}}}
\makeatother

\newtheorem{Thm}{Theorem}[section]
\newreptheorem{Thm}{Theorem}
\newtheorem{Thm*}{Theorem}
\newtheorem{Prop}[Thm]{Proposition}

\newtheorem{Lem}[Thm]{Lemma}

\newreptheorem{Cor}{Corollary}
\newtheorem{Con}[Thm]{Conjecture}

\newtheorem{thm-int}{Theorem}

\theoremstyle{definition}
\newtheorem{Def-s}[Thm]{Definition}
\newtheorem{Def}[Thm]{Definition}
\newtheorem{Rem}[Thm]{Remark}




\newcommand{\ignore}[1]{}

\begin{document}

\title{Rank $r$ DT theory from rank $1$}
\author{S. Feyzbakhsh and R. P. Thomas}
\maketitle
\begin{abstract}
Fix a Calabi-Yau 3-fold $X$ satisfying the Bogomolov-Gieseker conjecture of Bayer-Macr\`i-Toda, such as the quintic 3-fold.

We express Joyce’s generalised DT invariants counting Gieseker semistable sheaves of any rank $r$ on $X$ in terms of those counting sheaves of rank 1. By the MNOP conjecture they are therefore determined by the Gromov-Witten invariants of $X$.
\end{abstract}
\bigskip

Let $X$ be a smooth projective Calabi-Yau 3-fold satisfying the Bogomolov-Gieseker conjecture of Bayer-Macr\`i-Toda \cite{BMT}. We show that the higher rank or ``nonabelian" DT theory of $X$ is governed by its rank one ``abelian" theory. (``Nonabelian" and ``abelian" refer to the gauge groups $U(r)$ and $U(1)$ respectively.) This can be thought of as a 6-dimensional analogue of the correspondence between nonabelian Donaldson theory and abelian Seiberg-Witten theory for smooth 4-manifolds.

Combined with the MNOP conjecture \cite{MNOP}, now proved for most Calabi-Yau 3-folds \cite{PP}, this expresses any DT invariant $\J(v)$ entirely in terms of the Gromov-Witten invariants of $X$. Here $\J(v)\in\Q$ denotes Joyce-Song's generalised DT invariant \cite{JS} counting Gieseker semistable sheaves on $X$ of numerical K-theory class $v$ of rank $r>0$.

\begin{Thm*}\label{1} Let $(X,\cO_X(1))$ be a Calabi-Yau 3-fold satisfying the conjectural Bogomolov-Gieseker inequality of \cite{BMT}. Then for fixed $v$ of any rank $r>0$,
\beq{formu}
\J(v)\=F\(\J(\alpha_1),\J(\alpha_2),\dots\)
\eeq
is a universal polynomial in invariants $\J(\alpha_i)$, with all $\alpha_i$ of rank 1. If $X$ also satisfies the MNOP conjecture then we can replace the $\J(\alpha_i)$ by the Gromov-Witten invariants of $X$.\footnote{In particular if $X_1,\,X_2$ are symplectomorphic and both satisfy the BG and MNOP conjectures then their DT theories coincide. It has long been expected that DT invariants should be symplectic invariants.}
\end{Thm*}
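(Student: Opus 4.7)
The plan is to perform wall-crossing in the family of Bayer-Macr\`i-Toda weak stability conditions $\nubw$ on $D^b(X)$, whose entire machinery---bounded discriminants of semistable objects, locally finite wall-and-chamber structure, HN filtrations, and deformation invariance of counting invariants---is unlocked precisely by the BG inequality we are assuming. The overall argument is by induction on the rank $r$ of $v$.

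For the setup, fix $v$ with $\rk(v) = r > 0$. In the large-volume chamber ($b$ suitably negative, $w$ very large), $\nubw$-semistability on the relevant tilted heart $\cA_b \subset D^b(X)$ coincides with Gieseker semistability for torsion-free sheaves of class $v$. Hence $\J(v)$ equals the Joyce-Song generalised invariant $\J^{\nubw}(v)$ computed in this chamber, and we can work entirely inside tilt stability from this point on.

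The main step is to move $(b,w)$ along a carefully chosen path, crossing walls one at a time, until arriving at a chamber in which no $\nubw$-semistable object of class $v$ exists. Because BG bounds the discriminant of every potential HN factor at every wall, only finitely many classes $v_1, \dots, v_k$ with $\sum v_i = v$ can ever arise. Joyce-Song's universal wall-crossing formula for weak stability conditions then gives, across each wall, an identity of the form
\[
\J^{\nu_-}(v) = \J^{\nu_+}(v) + \sum_{k \ge 2} c_{v_1, \dots, v_k} \prod_i \J^{\nu_0}(v_i),
\]
with universal rational coefficients $c_{v_1, \dots, v_k}$. If the path terminates in a chamber with $\J^{\nubw}(v) = 0$, iterating this identity expresses $\J(v)$ as a universal polynomial in invariants $\J^{\nu}(v_i)$ of classes with $\rk(v_i) < r$ (automatic, since $k \ge 2$ and the positive $\rk(v_i)$ sum to $r$). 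A symmetric large-volume argument applied to each $v_i$ converts these back to Gieseker invariants $\J(v_i)$, and by induction on rank they are polynomials in rank-$1$ Joyce-Song invariants. The base case $r = 1$ is vacuous, and the MNOP clause is a direct substitution, rank-$1$ invariants being PT/MNOP invariants and hence, by \cite{MNOP,PP}, polynomials in the Gromov-Witten invariants of $X$.

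The hard part, I expect, is two-fold. First, constructing a path in the $(b,w)$-plane that actually terminates in a chamber emptied of $\nubw$-semistable objects of the full class $v$, while keeping all intermediate HN factor classes under uniform control; this requires a careful exploitation of the nested semicircular walls that BG carves out, so that the rank is provably driven down at every wall crossing. Second, handling strictly semistable phenomena throughout via the Joyce-Song logarithmic/combinatorial framework, which in this weak-stability setting demands verification that the support property, openness of semistability, and boundedness of moduli all transfer from Gieseker to tilt stability along the path---again something that only BG makes accessible.
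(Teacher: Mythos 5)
There is a genuine gap, and it sits exactly at the step you label ``automatic'': the claim that in each wall-crossing term the classes $v_i$ all have $\rk(v_i)<r$ because $k\ge2$ and the ranks sum to $r$. In the tilted heart $\cA_{\;b}$ the Harder--Narasimhan factors of an object of class $v$ need not have positive rank --- the heart contains rank $0$ torsion sheaves and shifted torsion-free sheaves of negative rank --- so a destabilising sequence for a rank $r$ sheaf can perfectly well have factors of ranks $r$ and $0$ (e.g.\ a torsion quotient supported on a divisor). The rank therefore does \emph{not} strictly drop at every wall, and your induction on $r$ has no base case covering the rank $0$, dimension $2$ classes that inevitably appear as factors: $r=1$ being ``vacuous'' does not help with them. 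This is not a technicality one can wave away; controlling precisely these rank $0$ dimension $2$ contributions is the entire content of the present paper.

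The paper's actual route is structured to avoid this. The reduction from rank $r$ to lower rank is \emph{not} done by directly wall-crossing the class $v$; it is done via Joyce--Song pairs $\cO_X(-n)\to F$, whose cones have rank $r-1$, and this was already carried out in \cite{FT1,FT2,FT3}, terminating in invariants of rank $0$ and pure dimension $2$. What remains --- and what Sections \ref{shvs}--\ref{wcross} prove as Theorem \ref{2} --- is to handle a rank $0$ dimension $2$ class $\v$ by wall-crossing the rank $-1$ complex of class $\vno=\v-[\cO_X(-n_0)]$ from the region below $\ell_f$ (emptied by the BG inequality) up to the large volume chamber (where Toda's results identify the semistable objects with stable pairs). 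Here the induction is on $\ch_1.H^2$ of the rank $0$ factors, not on rank, and the crucial inputs are Propositions \ref{c' negative} and \ref{T}: every wall produces one rank $0$ sheaf $E_0$ with $\ch_1(E_0).H^2$ strictly smaller than $\ch_1(\v).H^2$ (except on the Joyce--Song wall, where $E_1\cong T(-n_0)[1]$ and the class $\v$ itself appears with the computable coefficient $\bar\chi(\v(n_0))\cdot\#H^2(X,\Z)_{\mathrm{tors}}$), which is what lets one solve for $\J(\v)$. A final DT/PT wall crossing \cite{BrDTPT} and the tilt-to-Gieseker comparison of \cite[Section 5]{FT3} convert everything to rank $1$ DT invariants. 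Your sketch contains none of this mechanism for driving down the degree of the rank $0$ factors, so as written it cannot close.
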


The coefficients of $F$ depend only on $H^*(X,\Q)$ as a graded ring with pairing, $\ch(v)$, the Chern classes of $X$, and the class $H:=c_1(\cO_X(1))$ used to define Gieseker stability. There are countably many terms in the formula \eqref{formu} but only finitely many are nonzero.

\begin{Thm*}\label{2}
If $H^1(\cO_X)=0$ then Theorem \ref{1} also holds for classes $v$ of rank $r=0$.
\end{Thm*}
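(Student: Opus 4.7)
\textit{Strategy.} The plan is to reduce the rank--$0$ case to Theorem~\ref{1} by framing torsion sheaves with $\cO_X$. Given a rank $0$ class $v$, any short exact sequence
\[
0\to\cO_X\to E\to T\to 0
\]
with $T$ torsion of class $v$ produces a rank 1 sheaf $E$ of class $\alpha:=[\cO_X]+v$, accessible by Theorem~\ref{1}. The hypothesis $H^1(\cO_X)=0$ makes $\cO_X$ simple and rigid, so it is a stable object with $\J([\cO_X])$ explicitly $1$, and extensions are classified cleanly by $\Ext^1(T,\cO_X)$.

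\textit{Pair wall-crossing.} Following Joyce-Song, introduce pair invariants $P(v)$ counting pairs $(E,s)$ with $E$ Gieseker semistable rank 1 of class $\alpha$ and $0\ne s\colon\cO_X\to E$. A weak stability condition interpolating between Gieseker stability and the chamber in which $\cO_X$ splits off yields two complementary wall-crossing expressions for $P(v)$:
\[
\sum_{v_1+\dots+v_k=v}c\_{v_1,\dots,v_k}\prod_{i=1}^k\J(v_i)\=P(v)\=G\(\J(\alpha),\J([\cO_X]),\dots\),
\]
the left sum ranging over decompositions of $v$ into rank $0$ classes and the right-hand side being a universal polynomial in rank $1$ Joyce invariants (plus $\J([\cO_X])=1$). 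The $k=1$, $v_1=v$ term isolates a nonzero multiple of $\J(v)$, so one can solve for $\J(v)$ as a universal polynomial in $\J(\alpha)$, $\J([\cO_X])$ and $\J(v_j)$ for rank $0$ classes $v_j$ strictly simpler than $v$.

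\textit{Induction and main obstacle.} Iterate, inducting on a size ordering of rank $0$ classes (e.g.\ lexicographically on $(\dim\operatorname{supp}(v),\ch_{\bullet}(v))$), with base case the skyscraper classes for which $\J$ is a direct topological computation. Theorem~\ref{1} applied to the rank $1$ invariant $\J(\alpha)$ and the induction hypothesis applied to the $\J(v_j)$ then express $\J(v)$ as a universal polynomial in rank $1$ Joyce invariants alone. The technical core of the argument is twofold: (a) setting up the pair wall-crossing inside the $\nubw$-framework used to prove Theorem~\ref{1}, so that the same BMT inequality controls both formulas simultaneously; and (b) verifying nonvanishing of the leading coefficient $c_v$ in the left sum. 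Both steps hinge on $H^1(\cO_X)=0$: without it, $\cO_X$ acquires positive-dimensional deformations and cannot serve as a rigid framing, so the split side of the wall fails to separate the framing from the object being framed, and the polynomial identity above degenerates.
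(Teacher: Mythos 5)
Your overall shape (Joyce--Song-type pairs, a wall-crossing identity read in two ways, induction on the ``size'' of the rank $0$ class) is the right family of ideas, but the proposal has two genuine gaps, one in each half of your displayed identity.

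First, the framing itself. The paper's Joyce--Song pair for a rank $0$ sheaf $F$ of class $v$ is a section $s\in H^0(F(n))$ with $n\gg0$, i.e.\ a map $\cO_X(-n)\to F$; since $F$ has rank $0$ this map has both kernel and cokernel, so the relevant object is the rank $-1$ two-term \emph{complex} $E=\mathrm{Cone}(\cO_X(-n)\to F)[-1]$ of class $v-[\cO_X(-n)]$ --- not a rank $1$ sheaf. Your untwisted framing $0\to\cO_X\to E\to T\to0$ discards the large twist, and with it the vanishing $H^{\ge1}(F(n))=0$ that makes the pair moduli space a projective bundle over the stable locus, makes the leading wall-crossing coefficient the explicitly nonzero number $(-1)^{\chi(v(n))-1}\chi(v(n))$, and underlies the Behrend-function identities of Joyce--Song. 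The space $\Ext^1(T,\cO_X)$ classifying your extensions has jumping dimension (for torsion $T$ there is no Serre-duality vanishing of $\Ext^{2}(T,\cO_X)\cong H^1(T)^*$), so ``extensions are classified cleanly'' is exactly what fails; and a Gieseker semistable rank $1$ sheaf of class $[\cO_X]+v$ with a section has quotient supported on a divisor in a fixed linear system, so these pairs do not see all semistable torsion sheaves of class $v$. In particular the nonvanishing of your leading coefficient $c_v$, which you flag as a point to verify, genuinely can fail without the twist.

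Second, and more seriously, the right-hand equality $P(v)=G\big(\J(\alpha),\J([\cO_X]),\dots\big)$ is asserted rather than proved, and it is the entire content of the theorem. There is no single chamber ``in which $\cO_X$ splits off'' whose crossing yields an expression of the pair invariant in rank $1$ Joyce invariants; that degeneration gives only the left-hand sum (the Joyce--Song pair-to-DT identity within one Gieseker-type stability). To get the other expression the paper must wall-cross the rank $-1$ complex through the whole $(b,w)$-plane of weak stability conditions: it starts below the line $\ell_f$ forced by the Bogomolov--Gieseker inequality, where the moduli space is empty and the invariant is $0$; it then crosses finitely many walls, at each of which Propositions \ref{c' negative}, \ref{T} and \ref{prop.c' positive} show the destabilising factors are a rank $0$ sheaf of \emph{strictly smaller} $\ch_1.H^2$ (handled by the induction on $c$) together with a rank $-1$ complex (handled by a second induction); and it ends in the large volume chamber where Toda's theorem identifies the semistable rank $-1$ objects with PT stable pairs, whence DT/PT converts everything to rank $1$ ideal-sheaf counts. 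This chain --- the emptiness statement, the degree-drop at every wall, and the large-volume identification --- is the technical core you defer to in your final paragraph, and without it the proposed identity has no proof.
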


\subsection*{Joyce-Song pairs}
We fix any smooth complex projective threefold $(X,\cO_X(1))$ (not necssarily Calabi-Yau). A \emph{Joyce-Song stable pair} $(F,s)$ of class $v$ on $X$ consists of
\begin{itemize}
\item a rank $r$ semistable sheaf $F$ of class $v$, a fixed $n\gg0$, and
\item $s\in H^0(F(n))$ which factors through no semi-destabilising subsheaf of $F$.
\end{itemize}
For us ``semistable" will refer to a specific weak stability condition of \cite{BMT, BMS} (in contrast to the Gieseker stability used by Joyce-Song). When $r\ge1$ this is enough to ensure that $s$ is \emph{injective}, so it makes sense to consider its cokernel $E$,
\beq{JSs}
0\To\cO_X(-n)\rt{s}F\To E\To0,
\eeq
of class $v_n:=v-[\cO_X(-n)]$. This allows us to relate the counting of the semistable sheaves $F$ of rank $r$ to the counting of sheaves $E$ of rank $r-1$. We can then use wall crossing to try to move from weakly semistable $E,F$ to Gieseker semistable $E,F$. In this way we expressed rank $r\ge1$ DT invariants $\J(v)$ in terms of invariants $\J$ of ranks $\le r-1$ for
\begin{itemize}
\item ideal sheaves of curves (so $r=1$) in \cite{FT1},
\item arbitrary sheaves $F$ of rank $r=1$ in \cite{FT2}, and
\item arbitrary sheaves $F$ of arbitrary rank $r>1$ in \cite{FT3}.
\end{itemize}
This does not give Theorem 1, however. Instead, by inducting on $r=\rk(v)$, it expresses $\J(v)$ in terms of invariants of \emph{ranks 1 and 0}. Or, applying it one more time to the rank 1 invariants, it expresses $\J(v)$ in terms of counts of Gieseker semistable sheaves of \emph{rank 0 and pure dimension 2} \cite[Theorem 1]{FT3}. Thus to prove Theorem \ref{1} it therefore remains\footnote{We may assume $H^1(\cO_X)=0$ in Theorem \ref{1}; otherwise the Jac$\;(X)$ action on moduli spaces of sheaves forces $\J(v)=0$ for all classes $v$ of $\rk>0$. We focus on Theorem \ref{2} for classes $v$ of $\dim=2$ . For the $\dim v=1$ case see Section \ref{dim1}. For $\dim v=0$ \cite[Equation 6.19]{JS} expresses $\J(v)$ in terms of $e(X)=c_3(X)$.} to prove Theorem \ref{2} for \emph{rank 0 dimension 2} classes $v$, i.e. those with $\ch_1(v).H^2>0$.

In this paper we go one step further, from rank 0 to $-1$. We replace rank 0 sheaves $F$ by the cones $E\in\cD(X)$ on their Joyce-Song pairs. These are nontrivial complexes as $s$ has both kernel and cokernel. The exact sequence \eqref{JSs} is replaced by the exact triangle
\beq{trangl}
\cO_X(-n)\rt{s}F\To E\To \cO_X(-n)[1].
\eeq
By further wall crossing, dualising and shifting by $[1]$ we can then relate the semistable complexes $E$ to rank 1 ideal sheaves to prove Theorem 2 for rank 0 dimension 2 classes. 

\subsection*{Wall crossing} Most of the work is in Section \ref{shvs}, which applies to \emph{any} smooth projective 3-fold satisfying the Bogomolov-Gieseker conjecture. We use the weak stability conditions of \cite{BMT, BMS}, finding the walls of instability for objects of class
$$
v_n\=[E]\=[F]-[\cO_X(-n)]\=v-[\cO_X(-n)].
$$
The most important we call its \emph{Joyce-Song wall} $\ell_{\js}$ on which the slopes of $E,\,F$ and $\cO_X(-n)[1]$ coincide.
Below $\ell_{\js}$ the exact triangle \eqref{trangl} destabilises $E$, while above it such complexes become semistable. The same holds when we replace $\cO_X(-n)[1]$ by $T(-n)[1]$ for any line bundle $T$ with torsion first Chern class,
	\beq{Pic0}
	T\ \in\ \Pic\_0(X)\ :=\ \big\{L\in\Pic(X)\ \colon\ c_1(L)\,=\,0\,\in\,H^2(X,\Q)\big\}.
	\eeq

We show that all other walls of instability for the class $[E]$ are similar: whenever $E$ is destabilised below the wall it is by a triangle $E_0\to E\to E_1$ with $E_0$ a rank 0 sheaf and $E_1$ a rank $-1$ complex (with cohomology in degrees $-1$ and $0$ only), while above the wall it is extensions in the opposite direction that are unstable.

\subsection*{Wall crossing formula} From Section \ref{wcross} we assume $K_X\cong\cO_X$ and $H^1(\cO_X)=0$. Thus Joyce-Song's generalised DT invariants are defined and satisfy a wall crossing formula.

The wall crossing formula calculates the change in the invariants counting semistable objects in class $[E]$, as we cross any of the walls described above, in terms of the counting invariants of the classes $[E_0]$ and $[E_1]$ (and of the classes of semistable factors of $E_0,\,E_1$ of the same slope). For instance suppose for simplicity that, on crossing the Joyce-Song wall $\ell_{\js}$, linear combinations of $v$ and $\big[\cO_X(-n)[1]\big]$ are the only classes whose slope crosses that of $v_n$, and that $v$ is primitive. Then the wall crossing formula is
\beq{1stwcf}
\J_{b,w_+}(v_n)\=\J_{b,w_-}(v_n) +(-1)^{\chi(v(n))-1}\chi(v(n))\cdot\#H^2(X,\Z)_{\mathrm{tors}}\cdot\J_{b,w_+}(v),
\eeq
where $(b,w_{\pm})$ are points just above and below the wall $\ell_{\js}$ in the space of weak stability conditions, the $\J_{b,w}$ are the corresponding invariants counting semistable objects, and $\#H^2(X,\Z)_{\mathrm{tors}}$ is the invariant counting the objects $T(-n)[1]$ for $T\in\Pic\_0(X)$. Finally $(-1)^{\chi(v(n))-1}\chi(v(n))$ counts the extensions \eqref{trangl} by taking the signed topological Euler characteristic of the space $\PP\(H^0(F(n))\)$ of all of them.

\subsection*{From small to large volume} The Bogomolov-Gieseker inequality gives a region in the space of weak stability conditions in which there are no semistable objects $E$. Conversely, by work of Toda \cite{TodaBG}, there is a region (``the large volume chamber") in which the rank $-1$ objects $E$ (or $E_1$) are semistable if and only if the rank 1 complex $E^\vee\otimes(\det E)^{-1}[1]$ is a \emph{stable pair} in the sense of \cite{PT}. Using the wall crossing formula over the walls described above we move from the first of these regions (where the invariants vanish) to the second (where they equal stable pair invariants).

Just as \eqref{1stwcf} relates $\J_{b,w}(v)$ to the rank $-1$ invariants $\J_{b,w}(v_n)$, the eventual result is an analogous --- but more complicated --- formula relating invariants counting rank 0 sheaves such as $F$ and $E_0$ to stable pair invariants counting rank $-1$ complexes such as $E$ and $E_1$. The crucial fact, proved in Propositions \ref{c' negative} and \ref{T}, is that the degree $\ch_1\!.\;H^2$ of the support of these rank 0 sheaves is $\le\ch_1(v).H^2$ with equality only on $\ell_{\js}$. So working by induction on $\ch_1\!.\;H^2$ we may assume the lower order terms --- counting rank 0 sheaves $E_0$ with $\ch_1\!.\;H^2<\ch_1(v).H^2$ --- have already been expressed in terms of stable pair invariants. We thus conclude that so too is the invariant counting the sheaves $F$ \eqref{trangl} in class $v$.

By a further wall crossing already carried out in \cite{BrDTPT, TodaDTPT}, stable pair invariants can be expressed in terms of the rank 1 DT invariants counting ideal sheaves. And a final wall crossing from \cite[Section 5]{FT3} expresses the counts $\J(v)$ of Gieseker semistable sheaves of rank 0 and dimension 2 in terms of those which are semistable in the large volume chamber.\medskip

Our main technique for finding walls of instability is the Bogomolov-Gieseker conjecture of \cite{BMT, BMS}. This is now proved for many 3-folds, including some Calabi-Yau 3-folds \cite{BMS, Ko20, Li, MP}. The restricted set of weak stability conditions handled in \cite{Ko20, Li} are sufficient for our purposes, as we check carefully on the quintic 3-fold in Section \ref{Chunyi}. 
The MNOP conjecture is also proved for the quintic in \cite{PP}. So all DT invariants of the quintic 3-fold are given by universal formulae in its Gromov-Witten invariants.

%

\subsection*{Outlook}
Having dealt with the counting of sheaves, one could try to extend our result to counts of arbitrary Bridgeland semistable complexes of sheaves. After a possible shift to ensure rank $\ge0$ this reduces to another wall crossing problem --- assuming the space of stability conditions is connected --- to reach stability conditions dominated by the weak stability conditions considered in this paper and thus return to the problem of counting sheaves.

Applying Joyce's new wall crossing formula for Fano 3-folds \cite[Theorem 7.69]{Jo} to the results of Section \ref{shvs} should hopefully result in a Fano version of Theorem \ref{1} with insertions.

Kontsevich and Soibelman's refinements of $\J(v)$ \cite{KS1, KS2} should be well defined now their integrality conjecture has been proved \cite{BenSven} and orientation data has been shown to exist on Calabi-Yau 3-folds \cite{JU}. By using their wall crossing formula \cite{KS1} in place of \cite{JS} it should be possible to prove an analogue of Theorem \ref{1} for these invariants too.

Theorem \ref{1} is an abstract existence theorem for a universal expression $F$; it is not a practical route to concrete formulae. For explicit formulae for related invariants in special cases see \cite{Ob} (for reduced DT invariants of $K3\times E$), \cite[Equation 1.6]{FMR}, \cite[Equation 5.30]{DNPZ}, \cite{AK} (for K-theoretic counts of quotients of a trivial bundle on noncompact toric Calabi-Yau 3-folds) and \cite{TodaBG,F21} (for explicit forms of Theorem \ref{2} expressing counts of dimension 2 sheaves --- with restricted Chern classes ---
in terms of curve counts).

\subsection*{Acknowledgements}
We are grateful to Arend Bayer, Tom Bridgeland, Dominic Joyce, Davesh Maulik, Rahul Pandharipande and Yukinobu Toda for generous help and discussions about DT theory and stability conditions over many years. We also thank an anonymous referee.

We acknowledge the support of an EPSRC postdoctoral fellowship EP/T018658/1, an EPSRC grant EP/R013349/1 and a Royal Society research professorship.

\setcounter{tocdepth}{1}
\tableofcontents

\section{Weak stability conditions}\label{weak}
Let $(X, \cO(1))$ be a smooth polarised complex projective threefold with bounded derived category of coherent sheaves $\cD(X)$ and Grothendieck group $K(\mathrm{Coh}(X))$. Dividing by the kernel of the Mukai pairing gives the numerical Grothendieck group
\beq{Kdef}
K(X)\ :=\ \frac{K(\mathrm{Coh}(X))}{\ker\chi(\ \ ,\ \ )}\,.
\eeq
Notice that $K(X)$ is torsion-free, isomorphic to its image in $H^*(X,\Q)$ under the Chern character. Denoting $H=c_1(\cO_X(1))$, for any $v\in K(X)$ we set\footnote{Note $\ch\_H$ meant something different in \cite{FT3}.}
\beqa
\ch\_H(v) &:=& \Big(\!\ch_0(v),\ \tfrac1{H^3}\ch_1(v).H^2,\ \tfrac1{H^3}\ch_2(v).H,\ \tfrac1{H^3}\ch_3(v)\;\!\Big)\ \in\ \Q^4,\\
\ch_H^{\le2}(v) &:=& \Big(\!\ch_0(v),\ \tfrac1{H^3}\ch_1(v).H^2,\ \tfrac1{H^3}\ch_2(v).H\;\!\Big)\ \in\ \Q^3.
\eeqa

Scaling the usual definition by $H^3$, we define the $\mu\_H$-slope of a coherent sheaf $E$ to be
$$
\mu\_H(E)\ :=\ \left\{\!\!\!\begin{array}{cc} \frac{\ch_1(E).H^2}{\ch_0(E)H^3} & \text{if }\ch_0(E)\ne0, \\
+\infty & \text{if }\ch_0(E)=0. \end{array}\right.
$$
Associated to this slope every sheaf $E$ has a Harder-Narasimhan filtration. Its graded pieces have slopes whose maximum we denote by $\mu_H^+(E)$ and minimum by $\mu_H^-(E)$.

For any $b \in \mathbb{R}$, let $\cA_{\;b}\subset\cD(X)$ denote the abelian category of complexes
	\begin{equation}\label{Abdef}
	\Ab\ =\ \big\{E^{-1} \xrightarrow{\ d\ } E^0 \ \colon\ \mu_H^{+}(\ker d) \leq b \,,\  \mu_H^{-}(\cok d) > b \big\}. 
	\end{equation}
In particular, setting $\ch^{bH}(E):=\ch(E)e^{-bH}$, each $E\in\cA_{\;b}$ satisfies
\beq{65}
\ch_1(E).H^2-bH^3\ch_0(E)\=\ch_1^{bH}(E).H^2\=\ch_1^{bH}(\cok d)-\ch_1^{bH}(\ker d)\ \ge\ 0,
\eeq
with $\ge0$ replaced by $>0$ when $\cok d=\cH^0(E)$ has dimension $\ge2$.
By \cite[Lemma 6.1]{Br} $\cA_{\;b}$ is the heart of a t-structure on $\cD(X)$. We denote its positive cone by
\beq{CAb}
C(\Ab)\ :=\ \Big\{\sum\nolimits_ia_i[E_i]\ \colon\ a_i\,\in\,\N,\ E_i\,\in\,\Ab\Big\}\ \subset\ K(X).
\eeq
For any $w>\frac12b^2$, we have on $\cA_{\;b}$ the slope function
\begin{equation}\label{noo}
\nubw(E)\ =\ \left\{\!\!\!\begin{array}{cc} \frac{\ch_2(E).H - w\ch_0(E)H^3}{\ch_1^{bH}(E).H^2}
 & \text{if }\ch_1^{bH}(E).H^2\ne0, \\
+\infty & \text{if }\ch_1^{bH}(E).H^2=0. \end{array}\right.
\end{equation}
By \cite{BMT}\footnote{We use notation from \cite{FT2}; in particular the rescaling \cite[Equation 6]{FT2} of \cite{BMT}'s slope function.} $\nubw$ defines a Harder-Narasimhan filtration on $\cA_{\;b}$, and so a \emph{weak stability condition} on $\cD(X)$.

\begin{Def}\label{df1}
Fix $w>\frac12b^2$. Given an injection $F\into E$ in $\cA_{\;b}$ we call $F$ a \emph{destabilising subobject of} $E$ if and only if
\beq{seesaw1}
\nubw(F)\ \ge\ \nubw(E/F),
\eeq
and \emph{strictly destabilising} if $>$ holds. 
We say $E\in\cD(X)$ is $\nubw$-(semi)stable if and only if
\begin{itemize}
\item $E[k]\in\cA_{\;b}$ for some $k\in\Z$, and
\item 
$E[k]$ contains no (strictly) destabilising subobjects.
\end{itemize}
\end{Def}
It is important to note that under these conventions an object can be \emph{destabilised} even if it is \emph{semistable} --- in fact, if and only if it is strictly semistable. That is, our ``destabilised" is what others might call ``semi-destabilised" (as we did on page 1).

Also note that we cannot replace \eqref{seesaw1} by $\nubw(F)\ge\nubw(E)$; this is implied by \eqref{seesaw1} but does not imply it. So for instance the sequence $I_p\into\cO_X\onto\cO_p$, for $p$ a point of $X$, does not destabilise $\cO_X$ even though $\nubw(I_p)=\nubw(\cO_X)$.

\begin{Rem}\label{heart}
Given $(b,w) \in \mathbb{R}^2$ with $w> \frac12b^2$, the argument in \cite[Propostion 5.3]{Br.stbaility} describes $\cA_{\;b}$. It is the extension-closure of the set of $\nubw$-stable two-term complexes $E = \{E^{-1} \to E^0\}$ in $\cD(X)$ satisfying the following conditions on the denominator and numerator of $\nubw$ \eqref{noo}:
\begin{enumerate}
    \item $\ch_1^{bH}(E).H^2 \geq 0$, and
    \item $\ch_2(E).H - w\ch_0(E)H^3 \geq 0$ if $\ch_1^{bH}(E).H^2 = 0$. 
\end{enumerate}
\end{Rem}

By \cite[Theorem 3.5]{BMS} any $\nubw$-semistable object $E \in \mathcal{D}(X)$ satisfies
\beq{BOG}
    \Delta_H(E)\ :=\ \left(\ch_1(E).H^2\right)^2 -2(\ch_2(E).H)\ch_0(E)H^3\ \geq\ 0.
\eeq
Therefore, if we plot the $(b,w)$-plane simultaneously with the image of the projection map
\begin{eqnarray*}
	\Pi\colon\ K(X) \smallsetminus \big\{E \colon \ch_0(E) = 0\big\}\! &\longrightarrow& \R^2, \\
	E &\ensuremath{\shortmid\joinrel\relbar\joinrel\rightarrow}& \!\!\bigg(\frac{\ch_1(E).H^2}{\ch_0(E)H^3}\,,\, \frac{\ch_2(E).H}{\ch_0(E)H^3}\bigg),
\end{eqnarray*}
as in Figure \ref{projetcion}, then $\nubw$-semistable objects $E$ lie outside the
 open set
\begin{equation}\label{Udef}
U\ :=\ \Big\{(b,w) \in \mathbb{R}^2 \colon w > \tfrac12b^2  \Big\}
\end{equation}
while $(b,w)$ lies inside $U$.
\begin{figure}[h]
	\begin{centering}
		\definecolor{zzttqq}{rgb}{0.27,0.27,0.27}
		\definecolor{qqqqff}{rgb}{0.33,0.33,0.33}
		\definecolor{uququq}{rgb}{0.25,0.25,0.25}
		\definecolor{xdxdff}{rgb}{0.66,0.66,0.66}
		
		\begin{tikzpicture}[line cap=round,line join=round,>=triangle 45,x=1.0cm,y=0.9cm]
		
		\draw[->,color=black] (-4,0) -- (4,0);
		\draw  (4, 0) node [right ] {$b,\,\frac{\ch_1\!.\;H^2}{\ch_0H^3}$};


		\fill [fill=gray!40!white] (0,0) parabola (3,4) parabola [bend at end] (-3,4) parabola [bend at end] (0,0);
		
		\draw  (0,0) parabola (3.1,4.27); 
		\draw  (0,0) parabola (-3.1,4.27); 
		\draw  (3.8 , 3.6) node [above] {$w= \frac{b^2}{2}$};

		\draw[->,color=black] (0,-.8) -- (0,4.7);
		\draw  (1, 4.1) node [above ] {$w,\,\frac{\ch_2\!.\;H}{\ch_0H^3}$};
		
		\draw [dashed, color=black] (-2.3,1.5) -- (-2.3,0);
		\draw [dashed, color=black] (-2.3, 1.5) -- (0, 1.5);
		\draw [color=black] (-2.6, 1.36) -- (1.3, 3.14);
		
		\draw  (-2.8, 1.8) node {$\Pi(E)$};
		\draw  (-1, 3) node [above] {\Large{$U$}};
		\draw  (0, 1.5) node [right] {$\frac{\ch_2(E).H}{\ch_0(E)H^3}$};
		\draw  (-2.3 , 0) node [below] {$\frac{\ch_1(E).H^2}{\ch_0(E)H^3}$};
		\begin{scriptsize}
		\fill (0, 1.5) circle (2pt);
		\fill (-2.3,0) circle (2pt);
		\fill (-2.3,1.5) circle (2pt);
		\fill (1,3) circle (2pt);
		\draw  (1.2, 2.96) node [below] {$(b,w)$};
		
		\end{scriptsize}
		
		\end{tikzpicture}
		
		\caption{$(b,w)$-plane and the projection $\Pi(E)$ of a $\nubw$-semistable object $E\in\Ab$ with $\ch_0(E)<0$}
		
		\label{projetcion}
		
	\end{centering}
\end{figure}
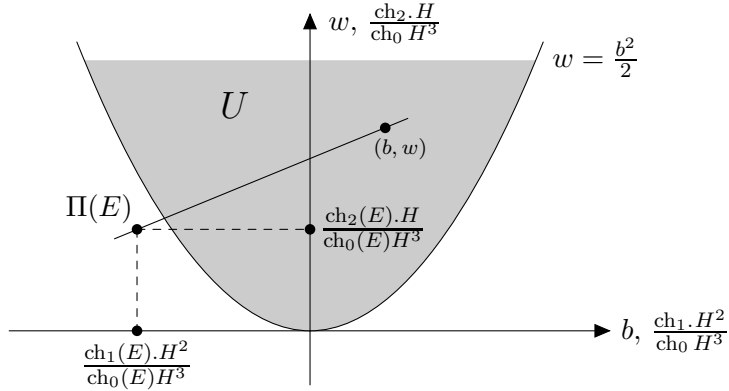
Shifting if necessary we may assume $E\in\Ab$, then by \eqref{65} we see $\Pi(E)$ lies on or to the left of the vertical line through $(b,w)$ if $\ch_0(E)<0$, to the right if $\ch_0(E)>0$, and at infinity if $\ch_0(E)=0$. The slope $\nubw(E)$ of $E$ is the gradient of the line connecting $(b,w)$ to $\Pi(E)$ (or $\ch_2(E).H\big/\!\ch_1(E).H^2$ if $\rk(E)=0$).

Any object $E\in\cD(X)$ gives the space of weak stability conditions a wall and chamber structure by \cite[Proposition 12.5]{BMS}, as rephrased in \cite[Proposition 4.1]{FT1} for instance.


\begin{Prop}[\textbf{Wall and chamber structure}]\label{prop. locally finite set of walls}
	Fix $v\in K(X)$ with $\Delta_H(v)\ge0$ and $\ch_H^{\le2}(v)\ne0$. There exists a set of lines $\{\ell_i\}_{i \in I}$ in $\mathbb{R}^2$ such that the segments $\ell_i\cap U$ (called ``\emph{walls of instability}") are locally finite and satisfy 
	\begin{itemize*}
	    \item[\emph{(}a\emph{)}] If $\ch_0(v)\ne0$ then all lines $\ell_i$ pass through $\Pi(v)$.
	    \item[\emph{(}b\emph{)}] If $\ch_0(v)=0$ then all lines $\ell_i$ are parallel of slope $\frac{\ch_2(v).H}{\ch_1(v).H^2}$.
	   		\item[\emph{(}c\emph{)}] The $\nubw$-(semi)stability of any $E\in\cD(X)$ of class $v$ is unchanged as $(b,w)$ varies within any connected component (called a ``\emph{chamber}") of $U \smallsetminus \bigcup_{i \in I}\ell_i$.
		\item[\emph{(}d\emph{)}] For any wall $\ell_i\cap U$ there is a map $f\colon F\to E$ in $\cD(X)$ such that
\begin{itemize}
\item for any $(b,w) \in \ell_i \cap U$, the objects $E,\,F$ lie in the heart $\cA_{\;b}$,
\item $E$ is $\nubw$-semistable of class $v$ with $\nubw(E)=\nubw(F)=\,\mathrm{slope}\,(\ell_i)$ constant on the wall $\ell_i \cap U$, and
\item $f$ is an injection $F\into E $ in $\cA_{\;b}$ which strictly destabilises $E$ for $(b,w)$ in one of the two chambers adjacent to the wall $\ell_i$.
\hfill$\square$

\end{itemize} 
	\end{itemize*} 
\end{Prop}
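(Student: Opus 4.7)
The plan is to translate $\nu\_{b,w}$-semistability into the geometry of Figure \ref{projetcion}: the slope $\nu\_{b,w}(E)$ equals the gradient of the line joining $(b,w)$ to $\Pi(E)$ (using the convention that when $\ch_0(E)=0$ the point $\Pi(E)$ is at infinity in the direction of $\big(\ch_1(E).H^2,\ch_2(E).H\big)$, so the slope is the constant $\ch_2(E).H/\ch_1(E).H^2$). Under this translation, if $F\into E$ is a destabilising subobject in $\cA_{\;b}$ with $\nu\_{b,w}(F)=\nu\_{b,w}(E)$, then the points $(b,w)$, $\Pi(F)$ and $\Pi(E)$ are collinear. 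Since $\Pi(E)=\Pi(v)$ is determined by the class, this gives (a) when $\ch_0(v)\ne0$, and when $\ch_0(v)=0$ the equation $\nu\_{b,w}(E)=\ch_2(v).H/\ch_1(v).H^2$ defines a line of exactly the slope in (b).

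Next I would pin down the candidate set of walls. For each numerical class $u\in K(X)$ with $\Delta_H(u)\ge0$ and $u,\,v-u$ both potentially in $C(\cA_{\;b})$ for some $b$, the equation $\nu\_{b,w}(u)=\nu\_{b,w}(v)$ cuts out one line $\ell_u$ in $\R^2$ (or is vacuous), and every actual wall must be one of these. So the index set $I$ is at worst countable.

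The main work is local finiteness of walls inside $U$. Fix a compact subset $K\subset U$. For any $(b,w)\in K$ and any destabilising subobject $F\into E$ with $E$ of class $v$ in $\cA_{\;b}$, both $F$ and $E/F$ lie in $\cA_{\;b}$, so by \eqref{65} their values of $\ch_1^{bH}(\,\cdot\,).H^2$ are nonnegative and sum to $\ch_1^{bH}(v).H^2$, which is bounded on $K$. Combined with the Bogomolov-Gieseker inequality \eqref{BOG} applied to both $F$ and $E/F$ (they are $\nu\_{b,w}$-semistable on the wall, since we may choose $F$ to be a Jordan-Hölder factor), this bounds $\ch_H^{\le2}(F)$ to a bounded region of $\Q^3$. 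The integrality of Chern characters then gives finitely many possibilities, hence finitely many walls meeting $K$. This is the step where one has to be most careful with the degenerate cases $\ch_0(F)=0$ or $\ch_0(E/F)=0$, as the corresponding points $\Pi$ lie at infinity; these are handled by the rank-$0$ term in the definition of $\nu\_{b,w}$ and the positivity \eqref{65}. This is also the technical obstacle, already carried out in \cite[Proposition~12.5]{BMS}.

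Part (c) is then formal: on a connected chamber, no Jordan-Hölder factor of $E$ in $\cA_{\;b}$ can change slope past another, so (semi)stability is locally constant; combined with the identification of $\cA_{\;b}$ across nearby $b$ via Remark \ref{heart}, this extends across the chamber. For (d), pick $(b_0,w_0)\in\ell_i\cap U$ on a wall and a $\nu\_{b_0,w_0}$-semistable $E$ of class $v$. Moving to a nearby $(b,w)$ in an adjacent chamber, $E$ either remains semistable or acquires a maximal destabilising subobject $F\into E$ by its Harder-Narasimhan filtration with $\nu\_{b,w}(F)>\nu\_{b,w}(E)$; letting $(b,w)\to(b_0,w_0)$, boundedness of candidate classes lets us pass to a limit class with $\nu\_{b_0,w_0}(F)=\nu\_{b_0,w_0}(E)$, and the corresponding map $f\colon F\to E$ then has all the required properties.
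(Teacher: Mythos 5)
The paper offers no argument for this Proposition at all: it is stated with a closing box and attributed to \cite[Proposition 12.5]{BMS} as rephrased in \cite[Proposition 4.1]{FT1}, so your sketch cannot be ``the same proof'' or ``a different proof'' — it is an outline of the standard argument that those references carry out, and you yourself defer the technical core to \cite[Proposition 12.5]{BMS}. That is a legitimate thing to do here, and the geometric translation (collinearity of $(b,w)$, $\Pi(F)$, $\Pi(v)$ giving (a) and (b); constancy of slope orderings on chambers giving (c); the Harder--Narasimhan filtration just off the wall giving the destabilising $F$ in (d)) is correct and matches how the paper uses the result.

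One caution if you intend the local-finiteness step to stand on its own rather than as a pointer to \cite{BMS}: the inequalities you invoke — nonnegativity and additivity of $\ch_1^{bH}(\,\cdot\,).H^2$ from \eqref{65} together with $\Delta_H\ge0$ \eqref{BOG} for each factor — do not by themselves confine $\ch_H^{\le2}(F)$ to a bounded region, because they leave $\ch_0(F)$ unbounded (and with it $\ch_1(F).H^2$ and $\ch_2(F).H$). You need one further input: either the upper bound $\Delta_H(F)\le\Delta_H(v)$ of \cite[Corollary 3.10]{BMS}, which is exactly the second inequality of \eqref{inq} that the paper uses in its proof of the adjacent Proposition \ref{large}, or the geometric observation that $\Pi(F)$ must lie on the wall-line and outside $U$, which fails for $|\ch_0(F)|\gg0$ since then $\Pi(F)$ approaches the point $(b,w)\in U$ along that line. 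Either fix closes the gap; without one of them the ``finitely many candidate classes'' conclusion does not follow.
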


We can improve on this local finiteness by showing we have finiteness as $w\to\infty$. This gives, for each fixed $v\in K(X)$, a \emph{large volume chamber} $\subseteq U$ in which there are no walls for $v$, so the $\nubw$-(semi)stability of objects of class $v$ is independent of $w\gg0$. It is the subset of $U$ above the uppermost walls described below, or all of $U$ if there are no walls.

When $\rk(v)\ne0$ we temporarily denote the half of $U$ to the left or right of $\Pi(v)$ by
$$
U_<\,:=\,\Big\{(b,w)\in U\,\colon\,b\,<\,\tfrac{\ch_1(v).H^2}{\ch_0(v)H^3}\Big\}\ \text{ and }\  U_>\,:=\,\Big\{(b,w)\in U\,\colon\, b\,>\,\tfrac{\ch_1(v).H^2}{\ch_0(v)H^3}\Big\}.
$$
\begin{Prop}[\textbf{Large volume chamber}]\label{large}
Fix $v\in K(X)$ with $\Delta_H(v)\ge0$.

If $\rk(v)=0$ (respectively $\rk(v)\ne0$) and there is a wall of instability for $v$ in $U$ (respectively $U_<$ or $U_>$), then there is an uppermost such wall.
\end{Prop}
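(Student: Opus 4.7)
By Proposition \ref{prop. locally finite set of walls} the walls inside $U$ are locally finite, so the claim fails only if there is a sequence of distinct walls $\ell_1,\ell_2,\ldots$ admitting points $(b_i,w_i)\in\ell_i\cap U$ with $w_i\to\infty$. I would argue by contradiction, bounding the destabilisers that can produce such a sequence.

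For each $\ell_i$, Proposition \ref{prop. locally finite set of walls}(d) yields a destabilising injection $F_i\into E_i$ in $\cA_{\;b}$, with $E_i$ of class $v$ and both $F_i$ and $E_i/F_i$ $\nubw$-semistable on $\ell_i$; so both satisfy the Bogomolov-Gieseker inequality \eqref{BOG} and both lie in the heart $\cA_{\;b}$ for every $(b,w)\in\ell_i\cap U$. The key geometric input is that as $w_i\to\infty$, the projection of $\ell_i\cap U$ to the $b$-axis is an interval whose length tends to $\infty$ by the shape of the parabola $w=\tfrac12b^2$ (growing like $\sqrt{w_i}$ in the rank-$0$ case, where $\ell_i$ is parallel of fixed slope; and similarly in the nonzero-rank case as $\ell_i$ swings toward vertical from $\Pi(v)$). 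Applying \eqref{65} to $F_i$ and $E_i/F_i$ across this widening interval forces
\[
0\ \le\ \ch_1(F_i).H^2-b\ch_0(F_i)H^3\ \le\ \ch_1(v).H^2-b\ch_0(v)H^3
\]
for $b$ ranging over an interval of unbounded length, which by integrality of $\ch_0$ allows only finitely many values of $\ch_0(F_i)$. The wall identity $\nubw(F_i)=\nubw(v)$ along $\ell_i$ then bounds $\ch_1(F_i).H^2$, and finally the two BG inequalities $\Delta_H(F_i),\Delta_H(E_i/F_i)\ge0$ bound $\ch_2(F_i).H$. So $\ch_H^{\le 2}(F_i)$ ranges over a finite set.

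Since each wall is determined by $v$ together with $\ch_H^{\le 2}(F_i)$ --- as the line through $\Pi(v)$ and $\Pi(F_i)$ when $\rk(v)\ne0$, or as the parallel of slope $\ch_2(v).H/\ch_1(v).H^2$ through $\Pi(F_i)$ when $\rk(v)=0$ --- only finitely many walls can appear in such a sequence, the desired contradiction. The main obstacle is the ``degenerate" case $\ch_0(F_i)=0$ (or $\ch_0(F_i)=\ch_0(v)$), where $\Pi(F_i)$ sits at infinity and the slope $\nubw(F_i)$ is independent of $(b,w)$; such a destabiliser already pins its wall to a single specific line (determined by the fixed slope of $F_i$), so these contribute only finitely many walls as well.
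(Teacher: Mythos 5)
Your overall strategy --- bound the numerical data $\ch_H^{\le2}$ of the destabilising factors and conclude that only finitely many walls can occur above any height --- is the same in spirit as the paper's, but there is a genuine gap at the point where you bound $\ch_2(F_i).H$. You invoke only the inequalities $\Delta_H(F_i)\ge0$ and $\Delta_H(E_i/F_i)\ge0$ from \eqref{BOG}. For a rank-zero class the discriminant is $(\ch_1.H^2)^2\ge0$, which is vacuous, and for the complementary factor of nonzero rank the inequality $\Delta_H\ge0$ only says that its projection $\Pi$ lies on or below the parabola $\partial U$ --- it gives a \emph{one-sided} bound on $\ch_2$ and does not prevent $\Pi$ from escaping to $-\infty$ inside a bounded vertical strip. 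Concretely, when $\rk(v)\ne0$ and $\ch_0(F_i)=0$, nothing in your argument excludes a sequence of rank-zero destabilisers with $\ch_0$ and $\ch_1.H^2$ fixed but $\ch_2(F_i).H$ unbounded; the corresponding walls (lines through $\Pi(v)$ of slope $\ch_2(F_i).H/\ch_1(F_i).H^2$) then march up to $w=+\infty$. Your closing remark that such a destabiliser ``pins its wall to a single specific line, so these contribute only finitely many walls'' is a non sequitur: each destabiliser pins one wall, but you have not shown there are finitely many such destabiliser classes, which is exactly what is at stake.

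The missing ingredient is the \emph{upper} bound on the discriminants of the factors, $0\le\Delta_H(v_i)<\Delta_H(v)$, from \cite[Corollary 3.10]{BMS} or \cite[Lemma 3.2]{FT3}. This is the engine of the paper's proof: working on a fixed vertical line $b=b_0$ (where all the twisted Chern characters $\ch^{b_0H}$ lie in fixed lattices), the two-sided bound on $\Delta_H(v_i)$ together with the heart condition \eqref{65} forces the product $\big(\!\ch_2^{b_0H}(v_i).H\big)\ch_0(v_i)$ into a finite set, after which an integrality argument --- using the complementary factor of nonzero rank to handle exactly your degenerate case --- pins down $\ch_2^{b_0H}(v_i).H$ and $\ch_0(v_i)$ separately. (A smaller issue: your claim that ``the wall identity $\nubw(F_i)=\nubw(v)$ bounds $\ch_1(F_i).H^2$'' is not justified as stated; what bounds $\ch_1(F_i).H^2$ is the heart condition \eqref{Abdef} evaluated at a suitable $b$, not the equality of slopes.) Without the upper discriminant bound the argument cannot be completed along the lines you propose when $\rk(v)\ne0$.
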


\begin{proof}
By the results of Proposition \ref{prop. locally finite set of walls} it is sufficient to show that for a fixed $b_0\in\Q$ not equal to $\frac{\ch_1(v).H^2}{\ch_0(v)H^3}$ there are only finitely many points $(b_0,w)$ which lie on walls of instability for the class $v$. 

A point $(b_0,w)$ on a wall of instability for $v$ gives a decomposition $v=v_1+v_2$ with
\beq{inq}
0\ \le\ \ch_1^{b_0H}(v_i).H^2\ \le\ \ch_1^{b_0H}(v).H^2\ \text{ and }\ 
0\ \le\ \Delta_H(v_i)\ <\ \Delta_H(v),
\eeq
the first by \eqref{Abdef} and the second by \cite[Corollary 3.10]{BMS} or \cite[Lemma 3.2]{FT3}. We will show this means there are only finitely many points $\(\!\ch_0(v_i),\,\ch_1^{bH}(v_i).H^2,\,\ch_2^{bH}(v_i).H\)\in\Q^3$ corresponding to such decompositions. There are therefore only finitely many $\Pi(v_i)\in\Q^2$, which by Proposition \ref{prop. locally finite set of walls} means only finitely many walls, as required. \medskip

Since $b_0\in\frac1N\Z$ for some $N\in\N$, the bounds \eqref{inq} imply there are only finitely many values $\ch_1^{b_0H}(v_i).H^2\in\frac1N\Z$ can take, and only finitely many integers $\Delta_H(v_i)$ can take. Furthermore, rewriting $\Delta_H$ as
$$
\Delta_H\=\(\!\ch_1^{b_0H}\!\!.H^2\)^2-2\(\!\ch_2^{b_0H}\!\!.H\)\ch_0H^3,
$$
we see that
\beq{fint}
(\ch_2^{b_0H}(v_i).H)\ch_0(v_i)\,\text{ takes only finitely many values.}
\eeq
If $\ch_0(v_i)=0$ for $i=1,2$ then both slopes $\nubw(v_i)$ are constant in $w$, contradicting the existence of a wall. So without loss of generality $\ch_0(v_1)\ne0$, which by \eqref{fint} and bounded denominators means that $\ch_2^{b_0H}(v_1).H$ takes only finitely many values. The same therefore also applies to 
$$
\ch_2^{b_0H}(v_2).H\=\ch_2^{b_0H}(v).H-\ch_2^{b_0H}(v_1).H.
$$
Finally if $\ch_2^{b_0H}(v_i)=0$ for $i=1,2$ then a simple calculation shows both $\Pi(v_i)$ lie on the line $w=b_0b-\frac12b_0^2$. Therefore so does $\Pi(v)$, so by 
Proposition \ref{prop. locally finite set of walls} this is the line of instability. But it is tangent to $\partial U$ at $(b_0,\frac12b_0^2)$, so does not pass through $U$. Thus $\ch_2^{b_0H}(v_1)\ne0$ without loss of generality and by \eqref{fint} there are only finitely many values for $\ch_0(v_1)$ and $\ch_0(v_2)=\ch_0(v)-\ch_0(v_1)$.
\end{proof}

In this paper, we always assume $X$ satisfies the conjectural Bogomolov-Gieseker inequality of Bayer-Macr\`i-Toda \cite{BMT}.  In the form of \cite[Conjecture 4.1]{BMS}, rephrased in terms of the rescaling \cite[Equation 6]{FT2}, it is the following.

\begin{Con}[\textbf{Bogomolov-Gieseker inequality}]\label{conjecture}
For any $(b,w)\in U$ and $\nubw$-semistable $E\in\cD(X)$, we have the inequality
\begin{equation}\label{quadratic form}
    B_{b, w}(E)\ :=\ (2w-b^2)\Delta_H(E) + 4\big(\!\ch_2^{bH}(E).H\big)^2 -6\(\!\ch_1^{bH}(E).H^2\)\ch_3^{bH}(E)\ \geq\ 0.
\end{equation}
\end{Con}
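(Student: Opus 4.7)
The statement is the Bayer-Macrì-Toda conjecture; in this generality it is open and is only known for certain 3-folds (abelian, some Fano, the quintic, \ldots) by the works cited at the close of the introduction. So what I can sketch is the framework that has succeeded in those known cases, rather than a complete argument.

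The skeleton is induction on the nonnegative integer $\Delta_H(E)$, which is available by \eqref{BOG}. The base case $\Delta_H(E)=0$ amounts to identifying the ``numerically thin'' $\nubw$-semistable objects --- shifts of twists of line bundles, and pure sheaves supported in low dimension --- and checking $B_{b,w}(E)\ge0$ on that short list by direct computation (for line bundles one has $\ch_3=\tfrac{1}{6}c_1^3$, which makes the inequality explicit).

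For the inductive step, fix a would-be counterexample $E$ with $\nubw$-semistability and $B_{b,w}(E)<0$. Using Proposition \ref{prop. locally finite set of walls} I would vary $(b,w)$ within the chamber where $E$ remains $\nubw$-semistable. Approaching a wall of instability produces semistable factors $E_i$ with strictly smaller $\Delta_H(E_i)$ by \cite[Corollary 3.10]{BMS}, which fall under the inductive hypothesis; approaching the boundary parabola $w=\tfrac12 b^2$ degenerates the weak stability condition toward a stability on a surface in $|H|$, where one hopes to invoke a restriction theorem and the classical Bogomolov-Gieseker inequality on that surface.

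The main obstacle is that $B_{b,w}$ is \emph{not} additive across short exact sequences in $\Ab$: the cubic term $\ch_3^{bH}$ and the squared $\ch_2^{bH}$ mix numerators and denominators of $\nubw$ in a way that does not split cleanly across a Jordan-H\"older filtration. To propagate the inductive bound across walls one therefore needs an auxiliary quadratic form (the ``support property'' form of Bayer-Macrì-Stellari) that does transform well under tilting and dominates $B_{b,w}$ on the classes that appear. Constructing this form, and verifying its positivity on every class that can arise as a semistable factor on one of the walls of instability, is where the difficulty concentrates, and where the existing proofs diverge from one class of 3-folds to another.
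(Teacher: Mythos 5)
You have correctly diagnosed the situation: the statement is a \emph{conjecture}, not a theorem of the paper. The paper never proves \eqref{quadratic form}; it is imposed as a standing hypothesis (``In this paper, we always assume $X$ satisfies the conjectural Bogomolov-Gieseker inequality of Bayer-Macr\`i-Toda''), and the only verification offered anywhere is the check in Section \ref{Chunyi} that the \emph{restricted} form proved by Li for the quintic \cite{Li} suffices for the particular applications of \eqref{quadratic form} made in Section \ref{shvs}. So there is no proof in the paper against which to compare your sketch, and recognising that the statement is open in general is the right answer.

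Your outline of how the known cases go --- induction on $\Delta_H$ starting from the $\Delta_H=0$ objects classified in \cite{BMS}, controlling semistable factors across walls via \cite[Corollary 3.10]{BMS}, degenerating toward $\partial U$ to invoke restriction to a surface in $|H|$, and the non-additivity of $B_{b,w}$ as the central obstruction --- is a fair description of the strategies in \cite{BMS, Ko20, Li, MP}, but it is a research programme rather than an argument, as you acknowledge. For the purposes of this paper nothing more is required: Conjecture \ref{conjecture} is an input, and the paper's theorems are conditional on it (unconditional only for 3-folds such as the quintic where enough of it is known).
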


Multiplying out and cancelling we find that $B_{b, w}$ is actually linear in $(b,w)$:
\beq{boglinear}
\tfrac12B_{b, w}(E)\=\(C_1^2-2C_0C_2\)w+\(3C_0C_3-C_1C_2\)b+(2C_2^2-3C_1C_3),
\eeq
where $C_i:=\ch_i(E).H^{3-i}$.
The coefficient of $w$ is $\ge0$ by \eqref{BOG}. When it is $>0$ the Bogomolov-Gieseker inequality \eqref{quadratic form} says that $E$ can be $\nubw$-semistable only above the line $\ell_f(E)$ defined by the equation $B_{b, w}(E)=0$. When $\ch_0(E)\neq0\neq\ch_1(E).H^2$ we can rearrange to see $\ell_f(E)$ is the line through the points $\Pi(E)$ and 
\begin{equation}\label{pi'}
\Pi'(E)\ :=\, \left(\frac{2\ch_2(E).H}{\ch_1(E).H^2}\,, \ \frac{3\ch_3(E)}{\ch_1(E).H^2}\right).    
\end{equation}

\section{Walls}\label{shvs}
Throughout the whole of this Section we fix a rank 0 class $\v\in K(X)$ with
\beq{vdef}
\ch\_H(\v)\=(0,\,c,\,s_0,\,d_0),
\eeq
where $c>0$. Then we pick $n_0\gg0$ as follows. There are finitely many explicit inequalities of the form $O\(n_0^i\)<O(n_0^{i+1})$ in this Section, starting at \eqref{slope} and ending with \eqref{lastn}. We fix
\beq{nzero}
n_0\=n_0(c,s_0,d_0)\ \gg\ 0
\eeq
sufficiently large that all of these inequalities hold. 
Then set $\vno:=\v-[\cO_X(-n_0)]$, so
\begin{equation}\label{class vn}
\ch\_H(\vno)\=\Big(\!-1, \ c+n_0,\ s_0-\tfrac12n_0^2 , \ d_0 +\tfrac16n_0^3\Big).
\end{equation}
This Section studies stability conditions $\nubw$ with $b$ strictly to the \emph{right} of $\Pi(\vno)$, i.e.
\beq{right}
b\ >\ -(c+n_0),\quad\text{so that}\quad \nubw(\vno)\ <\ +\infty.
\eeq

By \eqref{pi'} all walls for class $\vno$ lie on or above the line $\ell_f=\ell_f(\vno)$ which passes through 
\begin{equation*}
\Pi(\vno) = \Big(\!-(n_0+c) ,\ -s_0+\tfrac12n_0^2 \Big) \quad\text{and}\quad \Pi'(\vno) = \left(\frac{2s_0-n_0^2}{n_0+c}\,, \ \frac{3d_0 +\frac12n_0^3}{n_0+c}  \right).
\end{equation*} 	
The equation of $\ell_f$ then works out to be 
\beq{lfeq}
4w \= \left(\!-n_0 + \frac{n_0(6s_0 +c^2) +4(cs_0+3d)}{2cn_0 +c^2+2s_0}  \right)\!b + n_0^2
+ \frac{n_0^2(6s_0 -c^2) + 4(3dn_0 + 3dc-2s_0^2)}{2cn_0 +c^2+2s_0}\,.
\eeq 
Since $c\in\frac1{H^3}\Z$ this gives, for $n_0\gg0$,
\begin{equation}\label{slope}
\mathrm{slope}\;(\ell_f)\ >\ -\frac{n_0}{4} -|s_0|H^3.
\end{equation}
Setting $w=\frac12b^2$ in \eqref{lfeq} gives a quadratic equation with roots the values $b_1^f<b_2^f$ of $b$ at the two intersection points $\ell_f\cap\partial U$,
\beq{b1b2}
b_1^f\=-n_0 + \tfrac13c + O\(\tfrac1{n_0}\) \quad \text{and} \quad b_2^f\=\tfrac12n_0 -\tfrac1{12}c + \tfrac{3s_0}{2c} + O\(\tfrac1{n_0}\).
\eeq
The large distance $\frac32n_0+O(1)$ between these points enables us to restrict the form of certain $\nubw$-semistable objects, such as those of class $\vno$, and their semistable factors. So fix data
\begin{itemize}
\item a $\nubw$-semistable object $E\in\Ab$ of rank $-1$ and $\ch_1(E).H^2\le(n_0+c)H^3$,
\item a sequence $E'\into E\onto E''$ in $\Ab$ with all objects of the same $\nubw$-slope, where
\item $(b,w)$ lies to the right of $\Pi(E)$, and the line $\ell$ joining them is on or above $\ell_f\cap U$.\footnote{Throughout the paper this phrase will mean that $w$ is larger on $\ell$ than on $\ell_f$ whenever $b\in\big[b_1^f,b_2^f\big]$ \eqref{b1b2}.}
\end{itemize}
We allow either of $E',E''$ to be 0, in which case we could take the other to be \mbox{$E$ of class $\vno$.}

\begin{Lem}\label{lem.destabilising objects}
There is an ordering $E_0,E_1$ of $E',E''$ such that
\begin{itemize}
    \item $E_0$ is a sheaf of rank $0$; if it is nonzero then $\ch_1(E_0).H^2>0$, 
    \item $\rk\!\(\cH^{-1}(E_1)\)=1,\ \rk\!\(\cH^0(E_1)\)=0$ and $\ch_1(E_1).H^2\ge\(n_0-\frac c3\)H^3$.
\end{itemize}
\end{Lem}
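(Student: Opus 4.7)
The approach combines the Bogomolov--Gieseker inequality for $E'$ and $E''$ with the collinearity of their $\Pi$-images on $\ell$ and the hypothesis that $\ell$ lies on or above $\ell_f=\ell_f(\vno)$ inside $U$. A standard seesaw in $\cA_{\;b}$ shows that both $E'$ and $E''$ are $\nu_{b,w}$-semistable of the same slope as $E$: any subobject of $E'$ is a subobject of $E$, and any $F\hookrightarrow E''$ with $\nubw(F)>\nubw(E'')$ pulls back to a subobject of $E$ of slope $>\nubw(E)$. Consequently both satisfy $\Delta_H\ge 0$ and $B_{b,w}\ge 0$ by \eqref{BOG} and Conjecture~\ref{conjecture}. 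Writing $r'=\ch_0(E')$ and $r''=\ch_0(E'')$ we have $r'+r''=-1$, and whenever $r_i\ne 0$ the point $\Pi(E_i)$ lies on $\ell$ on the appropriate side of $(b,w)$.

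The core of the argument is the \emph{rank dichotomy} $\{r',r''\}=\{-1,0\}$. Suppose by symmetry $r''\ge 1$, so $r'\le -2$. Then $\Pi(E'')$ sits on $\ell$ at horizontal distance $\ch_1^{bH}(E'').H^2/(r''H^3)$ right of $(b,w)$, and $\Pi(E')$ at distance $\ch_1^{bH}(E').H^2/(|r'|H^3)$ left; the two numerators sum to $\ch_1^{bH}(E).H^2$, itself controlled by $\ch_1(E).H^2\le(n_0+c)H^3$. The BG inequalities $B_{b,w}(E'),\,B_{b,w}(E'')\ge 0$ force the lines $\ell_f(E')$ and $\ell_f(E'')$ to pass on or below $(b,w)$, pinning down their slopes in terms of the positions of the $\Pi(E_i)$. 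Combining with the collinearity of $\Pi(E'),\,\Pi(E),\,\Pi(E'')$ on $\ell$ and the hypothesis that $\ell$ stays above $\ell_f$ on the interval $[b_1^f,b_2^f]$ of width $\tfrac32 n_0+O(1)$ from \eqref{b1b2}, and invoking the slope bound \eqref{slope}, yields a numerical contradiction for $n_0\gg 0$. Labeling $E_0$ as the rank $0$ piece and $E_1$ as the rank $-1$ piece, the dichotomy is established.

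For the cohomological shape it remains to verify $\cH^{-1}(E_0)=0$ and $\rk\cH^{-1}(E_1)=1,\ \rk\cH^0(E_1)=0$. Apply the canonical short exact sequence $0\to\cH^{-1}(E_i)[1]\to E_i\to\cH^0(E_i)\to 0$ in $\cA_{\;b}$ to $E_i=E_0$ and $E_i=E_1$ in turn: if $\rk\cH^{-1}(E_i)$ exceeds its prescribed value, then $\rk\cH^0(E_i)$ does too, and after passing to a $\mu_H$-semistable sub or quotient sheaf of suitable $\nubw$-slope one obtains another same-slope decomposition of $E_i$ in $\cA_{\;b}$ whose ranks either violate the rank dichotomy or produce a BG-violating line through $(b,w)$. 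Finiteness of $\nu_{b,w}(E_0)=\nu_{b,w}(E)$ then gives $\ch_1(E_0).H^2\ne 0$, hence $>0$ by ampleness of $H$. Finally $\ch_1(E_1).H^2=\ch_1(E).H^2-\ch_1(E_0).H^2$, combined with an upper bound on $\ch_1(E_0).H^2$ extracted from BG applied near $b=b_1^f$, delivers $\ch_1(E_1).H^2\ge(n_0-\tfrac c3)H^3$.

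The principal obstacle is the second paragraph: the rank-dichotomy contradiction must be extracted sharply from the interplay of BG for $E',\,E''$, collinearity on $\ell$, and the large width $\tfrac32 n_0+O(1)$ of $[b_1^f,b_2^f]$, via the asymptotics \eqref{slope}--\eqref{b1b2}. Once the dichotomy is in place, the cohomological structure and the degree bounds reduce to systematic tracking of Chern characters along $\ell$ against the BG lines $\ell_f(E_i)$.
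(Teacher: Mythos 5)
There is a genuine gap, concentrated in your third paragraph. Your rank dichotomy $\{\ch_0(E'),\ch_0(E'')\}=\{0,-1\}$ (even granting it) says nothing about the ranks $r_i=\rk\cH^{-1}(E_i)$: a rank $0$ object of $\Ab$ can a priori have $\cH^{-1}$ and $\cH^0$ both of rank $k>0$, and a rank $-1$ object can have ranks $k+1$ and $k$. Your proposed fix --- pass to a $\mu\_H$-semistable sub/quotient sheaf ``of suitable $\nubw$-slope'' to get ``another same-slope decomposition of $E_i$'' contradicting the dichotomy --- does not work as stated: sub- and quotient sheaves of $\cH^{-1}(E_i)$ or $\cH^0(E_i)$ have no reason to share the $\nubw$-slope of $E_i$, and in any case the dichotomy you proved constrains decompositions of $E$, not of $E_i$. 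The mechanism that actually kills the extra cohomology is more elementary and does everything at once: since $\ell$ lies on or above $\ell_f\cap U$, the objects $E_i$ lie in $\Ab$ for \emph{every} $b\in(b_1^f,b_2^f)$, so the tilt conditions \eqref{Abdef} give $\ch_1\!\(\cH^{-1}(E_i)\).H^2\le b_1^f\,r_iH^3$ and $\ch_1\!\(\cH^0(E_i)\).H^2\ge b_2^f\(\!\ch_0(E_i)+r_i\)H^3$. Subtracting, summing over $i$, and comparing the coefficient of $n_0$ against $\ch_1(E).H^2\le(n_0+c)H^3$ forces $r_0+r_1\le1$ because the interval has width $b_2^f-b_1^f=\tfrac32n_0+O(1)$; then $r_1\ge1$ gives $r_0=0$, $r_1=1$, the long exact sequence of cohomology sheaves gives $\rk\cH^0(E_i)=0$, and $\cH^{-1}(E_0)=0$ follows from \eqref{Abdef}. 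Note this uses only the classical slope conditions on $\ker d$ and $\cok d$ --- the conjectural inequality $B_{b,w}(E_i)\ge0$ and the lines $\ell_f(E_i)$, which constrain $\ch_3$, are neither needed nor obviously helpful for a statement about $\ch_0$ and $\ch_1$; the only input from the BG conjecture is that $\ell$ sits above $\ell_f(\vno)$, which is a hypothesis.

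Your final step is also misrouted: the bound $\ch_1(E_1).H^2\ge\(n_0-\tfrac c3\)H^3$ should not be extracted from ``an upper bound on $\ch_1(E_0).H^2$ from BG'' (the upper bound $\ch_1(E_0).H^2\le\tfrac43cH^3$ is in fact a \emph{consequence} of the bound on $E_1$, used later in Lemma \ref{rk0ss}, so your logic runs backwards). It falls out directly from the two displayed inequalities above with $r_1=1$ and $\ch_0(E_1)+r_1=0$: then $\ch_1(E_1).H^2>-b_1^fH^3=\(n_0-\tfrac13c\)H^3+O\(\tfrac1{n_0}\)$, and integrality of $\ch_1(E_1).H^2$ and $n_0$ together with $c\in\tfrac1{H^3}\Z$ upgrade this to the stated $\ge$.
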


\begin{proof}
Since $\rk(E)=-1$, one of the objects $E',E''$ has rank$\,<0$; call it $E_1$. The other $E_0$ has rank$\,\ge0$.
Set $r_i:=\rk\!\(\cH^{-1}(E_i)\)\ge0$. Since $\ell$ lies on or above $\ell_f\cap U$, it lies inside $U$ for $b$ in the interval $(b_1^f,b_2^f)$. Thus $E_i\in\Ab$ by Proposition \ref{prop. locally finite set of walls}. By \eqref{Abdef} this gives
	\begin {eqnarray} \label{chr1}
	\ch_1\!\(\cH^{-1}(E_i)\).H^2 &\le& \Big[\!-n_0+\tfrac13c+O\(\tfrac1{n_0}\)\Big]r_iH^3 \\
	\text{and}\quad \label{chr2}
	\ch_1\!\(\cH^0(E_i)\).H^2 &\ge& \Big[\tfrac12n_0-\tfrac1{12}c+\tfrac{3s_0}{2c}+O\(\tfrac1{n_0}\)\Big](\ch_0(E_i)+r_i)H^3.
	\end{eqnarray}
	In particular, subtracting gives
	\beq{devon}
	\ch_1(E_i).H^2\ \ge\ \Big[\tfrac12n_0+O(1)
	\Big]\ch_0(E_i)H^3+\Big[\tfrac32n_0+O(1)
	\Big]r_iH^3.
	\eeq	
	Adding over $i=0,1$ gives $\ch_1(E).H^2$, so
	\beq{chr3}
	(n_0 +c)H^3\ \ge\ -\Big[\tfrac12n_0+O(1)\Big]H^3+\Big[\tfrac32n_0+O(1)\Big](r_0+r_1)H^3.
	\eeq
	Taking the coefficient of $n_0$ gives $1\ge r_0+r_1$. But $r_1\ge1$, so in fact $r_0=0$ and $r_1=1$. From the exact sequence
	\beq{LES}
	0\to\cH^{-1}(E')\To\cH^{-1}(E)\To\cH^{-1}(E'')\To\cH^0(E')\To\cH^0(E)\To\cH^0(E'')\to0
	\eeq
we deduce $\rk\!\(\cH^{-1}(E)\)=1$ while $\cH^0(E_0),\,\cH^0(E_1),$ $\cH^0(E)$ and $\cH^{-1}(E_0)$ are all sheaves of rank 0. By the definition \eqref{Abdef} of $\Ab$ the latter implies $\cH^{-1}(E_0)=0$, so $E_0$ is a sheaf of rank 0. Moreover $\nubw(E_0)=\nubw(E)<+\infty$, so $\ch_1(E_0).H^2>0$ if $E_0\ne0$.\medskip

Finally substituting $i=1,\ r_i=1$ and $\ch_0(E_1)+r_i=0$ into (\ref{chr1}, \ref{chr2}) gives
\[
\ch_1(E_1).H^2\=\big[\!\;\ch_1\!\(\cH^0(E_1)\)-\ch_1\!\(\cH^{-1}(E_1)\)\big].H^2\ >\ \(n_0-\tfrac13c\)H^3+O\(\tfrac1{n_0}\).
\]
Since $n_0,\,\ch_1(E_1).H^2 \in \Z$ and $c\in\frac1{H^3}\Z$, we conclude $\ch_1(E_1).H^2 \geq (n_0-\tfrac13c\)H^3$.
\end{proof}

It turns out $E_0$ is already semistable in its large volume chamber.

\begin{Lem}\label{rk0ss}
There are no walls for $E_0$ which lie on or above $\ell_f\cap U$.  
\end{Lem}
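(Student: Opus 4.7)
Since $E_0$ has rank $0$ and $\ch_1(E_0).H^2>0$, Proposition \ref{prop. locally finite set of walls}(b) tells us every wall for $E_0$ is a line of slope $m_{E_0}:=\ch_2(E_0).H\big/\ch_1(E_0).H^2$. Moreover, because $E_0\in\{E',E''\}$ appears as a Jordan–Hölder factor of the $\nubw$-semistable $E$ at $(b,w)\in\ell$, we have $\nubw(E_0)=\nubw(E)$; the left side equals $m_{E_0}$ (it is constant on $U$ for rank $0$ objects), while the right side equals the slope of the line joining $(b,w)$ to $\Pi(E)$, i.e.\ the slope of $\ell$. Thus slope$(\ell)=m_{E_0}$ and \emph{every wall for $E_0$ is parallel to $\ell$}.

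I will argue by contradiction: suppose a wall $\ell'$ for $E_0$ lies on or above $\ell_f\cap U$, with destabilising sub $F_0\hookrightarrow E_0$ in $\mathcal{A}_{b'}$ at a point $(b',w')\in\ell'\cap U$, strict on one of the two adjacent chambers. First I bound the rank of $F_0$: the cohomology long exact sequence of $0\to F_0\to E_0\to G_0\to 0$ in $\mathcal{A}_{b'}$, together with $\cH^{-1}(E_0)=0$ (as $E_0$ is a sheaf), forces $\cH^{-1}(F_0)=0$, so $\ch_0(F_0)=\rk\cH^0(F_0)\geq 0$. The case $\ch_0(F_0)=0$ is ruled out since then $\nubw(F_0)$ would be constant on $U$ and the equation $\nubw(F_0)=m_{E_0}$ would either be identically true or false, giving no genuine wall. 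Hence $\ch_0(F_0)>0$.

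Next I track how $\nubw(F_0)$ varies across parallel lines. A direct parametrisation shows that along a line at $w$-offset $\epsilon$ from $\ell'$,
\[
\nubw(F_0)-m_{E_0}\=-\,\frac{\epsilon\cdot\ch_0(F_0)H^3}{\ch_1^{bH}(F_0).H^2},
\]
with denominator strictly positive (otherwise $F_0$ would be torsion, contradicting $\ch_0(F_0)>0$). So $\nubw(F_0)>m_{E_0}$ strictly below $\ell'$, and $<m_{E_0}$ strictly above. If $\ell$ lies strictly below $\ell'$, then at $(b,w)\in\ell$ we get $\nubw(F_0)>m_{E_0}=\nubw(E_0)$, strictly destabilising $E_0$ at $(b,w)$ — but $E_0$ inherits $\nubw$-semistability from $E$ via the given exact sequence, a contradiction.

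The remaining case, which I expect to be the main obstacle, is when $\ell$ is on or above $\ell'$. Since $E_0$ is $\nubw$-semistable on $\ell$, the BG inequality \eqref{quadratic form} places $\ell$ on or above the line $\ell_f(E_0)$ (which also has slope $m_{E_0}$); so $\ell_f(E_0)\leq\ell'\leq\ell$. To close out this case I would apply BG (Conjecture \ref{conjecture}) to both $F_0$ and $G_0$ at $(b',w')$ and combine it with the hypothesis that $\ell$ is on or above $\ell_f\cap U$ — together with the quantitative formulae for $\ell_f$ from \eqref{lfeq}--\eqref{b1b2} and the bound $\ch_1(E_0).H^2\leq\tfrac{4c}{3}H^3$ extracted from Lemma \ref{lem.destabilising objects}. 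The resulting numerical inequalities should force $\Pi(F_0)$ (and hence $\ell'$) to lie strictly below $\ell_f\cap U$, contradicting the assumption on $\ell'$ and completing the proof.
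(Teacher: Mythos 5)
Your reduction to the key case is sound as far as it goes: walls for the rank-$0$ class $[E_0]$ are parallel lines of slope $\ch_2(E_0).H/\ch_1(E_0).H^2$, a destabilising subobject $F_0$ of a sheaf in $\cA_{\;b'}$ is itself a sheaf, and if both factors have rank $0$ then both slopes are constant in $(b,w)$ and there is no wall. But your argument stops exactly where the work begins. The case you yourself flag as ``the main obstacle'' --- ruling out $\ch_0(F_0)>0$ for a wall on or above $\ell_f\cap U$ --- is left as an unexecuted plan (``apply BG to $F_0$ and $G_0$ \dots should force \dots''), and the mechanism you propose is not the one that closes it. The paper does not apply the Bogomolov--Gieseker inequality to the factors at all. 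It uses only membership in the hearts: a wall $\ell_0$ lying on or above $\ell_f\cap U$ meets $\partial U$ at $b$-values $b_1<b_2$ with $b_2-b_1\ge b_2^f-b_1^f=\frac32 n_0+O(1)$ by \eqref{b1b2}, and the destabilising factors $F_i$ lie in $\cA_{\;b}$ for \emph{every} $b\in(b_1,b_2)$. Running the inequalities \eqref{chr1}--\eqref{chr3} over this whole interval gives
$$
\ch_1(E_0).H^2\ \ge\ b_2\rk(E_0)H^3+(b_2-b_1)(r_0+r_1)H^3, \qquad r_i:=\rk\!\(\cH^{-1}(F_i)\),
$$
which against the bound $\ch_1(E_0).H^2\le\frac43cH^3$ (the one input you did extract from Lemma \ref{lem.destabilising objects}) forces $r_0=r_1=0$, i.e.\ both factors are rank-$0$ sheaves --- precisely the configuration you discarded as ``no genuine wall''. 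That single computation is the whole proof; no case division on the relative position of $\ell$ and $\ell'$ is needed. It is not clear your BG-based plan can be completed as stated: $\Delta_H(F_0)\ge0$ places $\Pi(F_0)$ outside $U$, but by itself this does not push the line of slope $\ch_2(E_0).H/\ch_1(E_0).H^2$ through $\Pi(F_0)$ below $\ell_f\cap U$.

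There is also a soft spot in the case you do treat. A wall for $E_0$ in the sense of Proposition \ref{prop. locally finite set of walls} is a wall for the \emph{class} $[E_0]$: the object that is semistable on $\ell'$ and destabilised on one side need not be your particular $E_0$, and the semistability of $E_0$ itself at $(b,w)\in\ell$ is not among the hypotheses --- it is only given as a sub- or quotient object of the semistable $E$ of equal slope, which for a weak stability condition (where destabilising means $\nubw(F)\ge\nubw(E/F)$, not $\nubw(F)\ge\nubw(E)$) does not immediately make it semistable. So the contradiction ``$F_0$ strictly destabilises $E_0$ at $(b,w)$'' does not dispose of walls realised by other objects of the class, nor of walls lying between $\ell_f$ and $\ell$.
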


\begin{proof}
Lemma \ref{lem.destabilising objects} gives $\ch_1(E_1).H^2\ge\(n_0-\frac c3\)H^3$, so
\beq{E0bd}
\ch_1(E_0).H^3\ \le\ \tfrac43cH^3.
\eeq
Given a wall of instability $\ell_0 \cap U$ for $E_0$ which lies along or above $\ell_f\cap U$, the difference in $b$-values $b_1<b_2$ of the points of $\ell_0\cap\partial U$ satisfies $b_2-b_1\ge b_2^f-b_1^f=\frac32n_0+O(1)$ by \eqref{b1b2}.
Repeating the arguments (\ref{chr1}, \ref{chr2}, \ref{devon}, \ref{chr3}) for destabilising factors $F_i$ of $E_0$, \eqref{chr3} becomes
$$
\ch_1(E_0).H^2\ \ge\ b_2\rk(E_0)H^3+(b_2-b_1)(r_0+r_1)H^3,
$$
where $\rk(E_0)=0$ and $r_i:=\rk\(\cH^{-1}(F_i)\)$. So by \eqref{E0bd},
\beq{tick}
\tfrac43cH^3\ \ge\ \(\tfrac32n_0+O(1)\)(r_0+r_1)H^3.
\eeq
 Thus $r_0=0=r_1$ and both $F_i$ have rank 0. But then their $\nubw$ slopes \eqref{noo} --- and that of $E_0$ --- are constant in $(b,w)$, contradicting the existence of the wall $\ell_0$.
\end{proof}

As we cross walls of instability for $\vno$ above $\ell_f$ we will have to deal with semistable factors of various types, and their own wall crossing. One type will be classes
$$
w_n\=\w-\,[\cO_X(-n )]\ \text{ which are close to }\  \vno\=\v-[\cO_X(-n_0)]
$$
in an appropriate sense. In fact it is important to note we will allow fractional $n\in\frac1{H^3}\Z$ and $\w\in K(X)\otimes\Q$ so long as $w_n\in K(X)$ is integral.

\begin{Def}\label{close} We say that $w_n\in K(X)$ is \emph{close to} $\vno$ if
\begin{itemize}
    \item $\Pi(w_n)$ lies on or above $\ell_f$ and
    \item $\ch\_H(w_n)=(0,c,s,d)-\(1,-n,\frac12n^2,-\frac16n^3\)$,
    \end{itemize}
    where $n\in\frac1{H^3}\Z$ and $s,d$ satisfy the following bounds.\vspace{1mm}
\begin{itemize}
    \item $n\in\big[n_0-\frac c3,\,n_0\big]$. That is, if we set $\delta_n:=n_0-n\in\frac1{H^3}\Z$ then $\delta_n\in\big[0,\frac c3\big]$.\vspace{1.6mm}
    \item $-n_0\delta_n-|s_0| \,\le\,  s \,\le\, -\frac34n_0\delta_n+ \(c +|s_0|H^3\)\delta_n +s_0$.\vspace{1mm}
    \item $d\,\ge\,\frac{15}{32}n_0^2\delta_n -n_0\delta_n\(|s_0|H^3 +c\) -\delta_n\(s_0H^3\)^2 + d_0$.\vspace{1mm}
\end{itemize} 
In particular setting $\delta_n=0$ shows that $\vno$ is close to $\vno$, which is good.
\end{Def}


%
For such a class we let $\ell(w_n)$ be the line parallel to $\ell_f$ through the point $\Pi(w_n)$ as in Figure \ref{figUwm}. We will only ever need to understand walls of instability inside the region
$$
U(w_n)\ :=\ \Big\{(b,w)\in U\ \colon\, (b,w)\text{ is on or above }\ell(w_n)\text{ and to the right of }\Pi(w_n)\Big\}.
$$\vspace{-5mm}
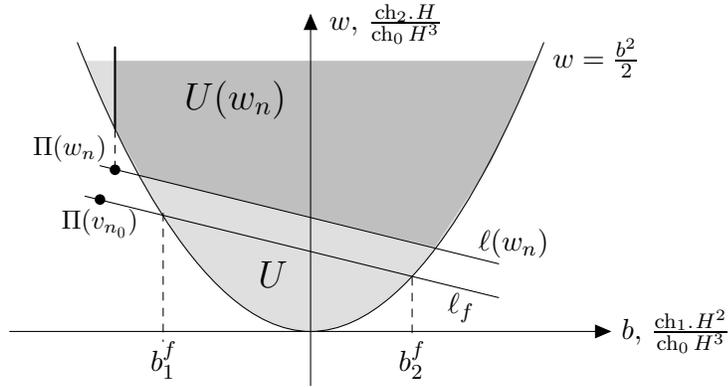
\begin{figure}[h]
	\begin{centering}
		\definecolor{zzttqq}{rgb}{0.27,0.27,0.27}
		\definecolor{qqqqff}{rgb}{0.33,0.33,0.33}
		\definecolor{uququq}{rgb}{0.25,0.25,0.25}
		\definecolor{xdxdff}{rgb}{0.66,0.66,0.66}
		
		\begin{tikzpicture}[line cap=round,line join=round,>=triangle 45,x=1cm,y=0.9cm]
		
		\draw[->,color=black] (-4,0) -- (4,0);
		\draw  (4, 0) node [right ] {$b,\,\frac{\ch_1\!.\;H^2}{\ch_0H^3}$};
		
		\fill [fill=gray!25!white] (0,0) parabola (3,4) parabola [bend at end] (-3,4) parabola [bend at end] (0,0);
		
	    \fill[fill=gray!50!white](-2.6, 4)--(-2.6,3)--(-2.25,2.3)--(1.65,1.25)--(2.4,2.5)--(3,4); 
	   
		\draw  (0,0) parabola (3.1,4.27); 
		\draw  (0,0) parabola (-3.1,4.27); 
		\draw  (3.8 , 3.6) node [above] {$w= \frac{b^2}{2}$};
	
		\draw[->,color=black] (0,-.8) -- (0,4.7);
		\draw  (1, 4.1) node [above ] {$w,\,\frac{\ch_2\!.\;H}{\ch_0H^3}$};
		
		\draw [color=black] (-3, 2) -- (2.5, .5);
		\draw [color=black] (-2.8, 2.45) -- (2.5, 1);
	    \draw [dashed] (-1.96, 1.75) -- (-1.96,-0.15);		
	    \draw [dashed] (1.35,.8) -- (1.35,-0.1);
	    \draw [dashed] (-2.6,2.4) -- (-2.6,3);
	    \draw [thick] (-2.6,3) -- (-2.6,4.2);
	    
	    \draw  (-1.95, 0) node[below]{$b_1^f$};
	    \draw  (1.35, 0) node[below]{$b_2^f$};
	    \draw  (-2.8, 1.6) node {{\small$\Pi(\vno)$}};
	    \draw  (-3.2, 2.7) node {{\small$\Pi(w_n)$}};
		\draw  (2, .35) node {$\ell_f$};
		\draw  (2.7, 1.3) node {$\ell(w_n)$};
        \draw  (-1, 3) node [above] {\Large{$U(w_n)$}};
        \draw  (-.5, .5) node [above] {\Large{$U$}};
		\fill (-2.6,2.39) circle (2pt);
		\fill (-2.8,1.95) circle (2pt);

		\end{tikzpicture}
		
		\caption{The line $\ell(w_n)$ and the region $U(w_n)\subset U$}
		\label{figUwm}
	\end{centering}
\end{figure}

%
We now analyse the possible wall crossing of objects of class $\vno$ and, more generally, classes $w_n$ close to $\vno$. So for the rest of this Section we fix the following data,
\begin{itemize}
    \item a class $w_n$ close to $\vno$ in the sense of Definition \ref{close}, 
    \item a weak stability condition $(b,w)\in U(w_n)$,
    \item a $\nubw$-semistable object $E$ of class $w_n$, and
    \item a $\nubw$-destabilising sequence $E'\into E\onto E''$ in $\Ab$.
\end{itemize}
We then let $\ell$ denote the line through $(b,w)$ and $\Pi(w_n)$. Since $\ch_1(w_n).H^2=(n+c)H^3\le(n_0+c)H^3$ we may apply Lemma \ref{lem.destabilising objects}. This renames $E',E''$ (in some order) as $E_0,E_1$ of ranks $0,-1$ respectively.

Since $\ell$ lies on or above $\ell_f\cap U$, and all possible walls for $E_0$ are parallel to $\ell$, we conclude from Lemma \ref{rk0ss} that there are no walls for $E_0$ on or above $\ell\cap U$.

We write the classes of the destabilising factors $E_0,\,E_1$ as
\beq{vec}
\ch\_H(E_0)\=\(0,\,c-c',\,s-s',\,d-d'\), \quad \ch\_H(E_1)\=\(\!\;-1,\,n +c',\,-\tfrac12n^2+s', \,\tfrac16n^3+d'\),
\eeq
for some $c', s', d' \in \mathbb{Q}$ with $c-c'>0$. The key result in this paper is the following.

\begin{Prop}\label{c' negative}
	Either $c'\in(0,c)$ or $c'= 0$ and $\delta_n = 0$. 
\end{Prop}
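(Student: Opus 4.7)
The plan is to argue by contradiction: assume towards a contradiction that either $c'<0$, or $c'=0$ and $\delta_n>0$, and derive a contradiction from the Bogomolov–Gieseker inequalities applied to $E_0$ and $E_1$, combined with the closeness conditions of Definition \ref{close}.

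First I would collect the elementary bounds already at hand. Lemma \ref{lem.destabilising objects} gives $\ch_1(E_1).H^2\geq(n_0-c/3)H^3$, equivalently $c'\geq\delta_n-c/3$; this alone is insufficient since the right-hand side can be negative. Next, since $E_1$ is $\nubw$-semistable on the wall $\ell$, the classical Bogomolov inequality \eqref{BOG} applied to the data in \eqref{vec} gives
\begin{equation*}
\Delta_H(E_1)\=\bigl[\,2nc'+c'^2+2s'\,\bigr](H^3)^2\ \geq\ 0,\qquad \text{hence}\qquad s'\ \geq\ -nc'-\tfrac12 c'^2.
\end{equation*}

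Next I would invoke Conjecture \ref{conjecture} (which we assume throughout) for $E_1$. By \eqref{boglinear}, $B_{b,w}(E_1)$ is linear in $(b,w)$ with a nonnegative coefficient of $w$, so the inequality $B_{b,w}(E_1)\geq0$ is tightest at the lowest point of $\ell\cap U(w_n)$; evaluating there gives a lower bound on $d'$ of the form $d'\geq\phi(c',s',n)$. Symmetrically, Conjecture \ref{conjecture} applies to the rank-$0$ sheaf $E_0$, which by Lemma \ref{rk0ss} is $\nubw$-semistable everywhere on $\ell$. With $\ch_0(E_0)=0$ it reads
\begin{equation*}
(2w-b^2)(c-c')^2(H^3)^2 + 4\bigl(\ch_2^{bH}(E_0).H\bigr)^2\ \geq\ 6\bigl(\ch_1^{bH}(E_0).H^2\bigr)\ch_3^{bH}(E_0),
\end{equation*}
and, optimised over $(b,w)\in\ell\cap U(w_n)$, yields an upper bound on $d-d'$ in terms of $c-c'$ and $s-s'$.

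Now I would feed these into the three closeness bounds on $n,s,d$ in Definition \ref{close}. Writing $s=(s-s')+s'$ and $d=(d-d')+d'$, the lower bound on $s'$ from $\Delta_H(E_1)\geq0$, the lower bound on $d'$ from BG for $E_1$, and the upper bound on $d-d'$ from BG for $E_0$ should combine to give an inequality whose leading $n_0$-coefficient is a positive multiple of $c'$ (or, when $c'=0$, of $\delta_n$). Since $n_0$ is chosen very large in \eqref{nzero}, this forces $c'\geq0$ with equality only if $\delta_n=0$, contradicting the assumption.

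The main obstacle will be the algebra: several inequalities must be combined, and the optimal choice of $(b,w)\in\ell\cap U(w_n)$ at which to evaluate the BG inequalities for $E_0$ and $E_1$ is delicate, since different choices isolate different coefficients. The right choices must make the dominant $n_0^2$-terms align into a coefficient proportional to $c'$ (or $\delta_n$). A further subtlety is that $c',s',d'\in\tfrac1{H^3}\Z$, so once the asymptotics pin down $c'\geq0$ one must still rule out $c'=0$ when $\delta_n>0$; this should come from the subleading $O(n_0)$ term in the same chain of inequalities, using the explicit constants in Definition \ref{close}.
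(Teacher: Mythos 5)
Your plan follows the paper's proof essentially verbatim: assume $c'\le 0$, use Lemma \ref{lem.destabilising objects} and the classical Bogomolov inequality to pin down $c'\in\big[-\tfrac13c,0\big]$ and $s'\ge0$, apply the Bogomolov--Gieseker inequality to the rank-$0$ sheaf $E_0$ (via \cite[Lemma B.3]{FT3}) and to $E_1$ at a judiciously chosen point of $\ell\cap U$, and combine with the bounds of Definition \ref{close} to extract a leading coefficient in $n_0$ that forces $c'=0=\delta_n$. One directional slip to fix when you carry out the algebra: since $\ch_1^{bH}(E_1).H^2>0$, the inequality \eqref{quadratic form} for $E_1$ gives an \emph{upper} bound on $d'$ (this is \eqref{BGd'}), while the \emph{lower} bound on $d'$ comes from the upper bound on $d-d'$ supplied by BG for $E_0$ together with the lower bound on $d$ in Definition \ref{close}; with the directions corrected, the clash between these two bounds on $d'$ at order $n_0^3$ is exactly the paper's argument (and it already rules out $c'=0$, $\delta_n>0$ at leading order, so no subleading analysis is needed).
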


\begin{proof}
	Assume $c' \leq 0$. By Lemma \ref{lem.destabilising objects}, $n+c'\ge n_0-\frac13c$, so we get the bounds
\beq{del}
-\delta_n +c'\ \geq\  -\tfrac13c \quad\text{and}\quad c'\,\in\,\big[\!-\tfrac13c,\,0\big]\cap\tfrac1{H^3}\Z.
\eeq

We can also bound $s'$. By Proposition \ref{prop. locally finite set of walls} the wall $\ell$ passes through $\Pi(E_1)$ and $\Pi(w_n)$, so has equation
	\beq{ell}
	w-\tfrac12n^2 + s\=\frac{s-s'}{c-c'}(b+n+c).
	\eeq
Since $\ell$ lies above or on $\ell(w_m)$, which has slope $-\frac{n_0}{4} + O(1)$ by \eqref{lfeq}, we get the bound
	\begin{equation}\label{s-1}
	s'\ \leq\ s+ (c-c')\left( \frac{n_0}{4} +O(1)\right). 
	\end{equation}
	 The Bogomolov inequality \eqref{BOG} for $E_1$ is $0\le\Delta_H(E_1)=(n+c')^2+2\(s'-\frac12n^2\)$, which is
$$
s'\ \ge\ -\(n+\tfrac12c'\)c'\ \stackrel{\eqref{del}}\ge\ 0.
$$
Combined with \eqref{s-1}, \eqref{del} and the bound $s\le s_0$ from Definition \ref{close} this bounds
\beq{s'}
0\ \le\ s'\ \le\ \tfrac13cn_0+O(1).
\eeq\smallskip

Next we apply the Bogomolov-Gieseker inequality to $\ch\_H(E_0)$ and $\ch\_H(E_1)$ \eqref{vec}. First consider $E_0$. By Lemma \ref{lem.destabilising objects} it is a \emph{sheaf of rank $0$} which is $\nubw$-semistable on $\ell\cap U$, so \cite[Lemma B.3]{FT3} applies. There we showed it is $\nubw$-semistable at a judicious point $(b,w)$ of $U$ where the Bogomolov-Gieseker inequality \eqref{quadratic form} bounds its $\ch_3$ by
\begin{align}
d -d'\ \leq \,\ &\frac{(s -s')^2}{2(c-c')} + \frac{1}{24}(c-c')^3 \label{dd'}\\
\overset{\eqref{s-1}}{\leq}\ &\frac{(c-c')}{2} \left(-\frac{n_0}{4}\right)^{\!2}+ O(1)\=\frac{(c-c')}{32}n_0^2+O(1).\label{dd'-1}
\end{align}

Next we do the same for $\ch\_H(E_1)$ at the point
$$
b\,=\,0,\quad w\,=\,\frac{n^2}2-s+(n+c)\frac{s-s'}{c-c'}
$$
of the line $\ell$ \eqref{ell}. Since $\ell$ is above $\ell_f\cap U$ this lies in $U$. 
Arranged in powers of $n$ \eqref{boglinear} gives
\begin{align}\nonumber
\frac{n^3}{2}&c' +n^2\!\left(\frac{c'\;^2}{2} +2c'\frac{s-s'}{c-c'} -s'\right) +n \left(-2c's+\frac{s-s'}{c-c'}(2c'c+c'\;^2+2s')  \right)+2s'\;^2+\\
&(c'\;^2+2s')\left(\frac{sc'-cs'}{c-c'}\right)\ \geq\ 3d'(n+c') \ \stackrel{\eqref{dd'}}{\ge}\ 3(n+c')\left[d- \frac{(s-s')^2}{2(c-c')} - \frac{1}{24}(c-c')^3     \right]\!.\label{BGd'}
\end{align}
By \eqref{del}, \eqref{s'} and the bounds on $n,s,d$ in Definition \ref{close}, each term above is $\le O(n_0^3)$. Discarding $O(n_0^2)$ terms shows the following is nonnegative,
\begin{align}\nonumber
\frac{n_0^3}2c'+\,&n_0^2\left(-s'+2c'\frac{s-s'}{c-c'}\right)+2n_0s'\frac{s-s'}{c-c'}
-3n_0d+\frac{3n_0}2\frac{(s-s')^2}{c-c'}\\
&\=\frac{n_0^3}2c'+\frac{n_0}{c-c'}\Big[-n_0c's'-n_0cs'-ss'+2n_0c's-\tfrac12(s')^2+\tfrac32s^2-3dc+3dc'\Big].\label{long}
\end{align}
By $-s\le n_0\delta_n+|s_0|$ from Definition \ref{close}, the first three terms in the square bracket are
$$
-n_0c's'-n_0cs'-ss'\,\stackrel{\eqref{s'}}\le\,
n_0s'\(-c'-c+\delta_n+O\(\tfrac1{n_0}\)\)\,
\stackrel{\eqref{del}}{\le}\,\(\!-\tfrac23n_0c+O(1)\)s'\,\stackrel{\eqref{s'}}{\le}\,0,
$$
so removing them --- and the fifth bracketed term $-\frac12(s')^2\le0$ --- from \eqref{long} we deduce
\beq{ded}
\tfrac12c'n_0^3+\frac{n_0}{c-c'}\Big[2n_0sc'+\tfrac32s^2-3dc+3dc'\Big]\ \ge\ 0.
\eeq
Since $d\ge\frac13n_0^2\delta_n+ O(n_0)$ by Definition \ref{close}, the coefficient of $c'$ inside the bracket is
$$
2n_0s+3d\ \ge\ -2n_0^2\delta_n+n_0^2\delta_n+O(n_0)\=-n_0^2\delta_n+O(n_0).
$$
Using this and $0\ge c'\ge\delta_n-\frac13c$ \eqref{del} in the inequality \eqref{ded}, and discarding $O(n_0^2)$ terms,
\beqa
0 &\le& \tfrac12c'n_0^3+\frac{n_0}{c-c'}\Big[-n_0^2\delta_n\(\delta_n-\tfrac13c\)+\tfrac32n_0^2\delta_n^2-cn_0^2\delta_n \Big] \\
&=& \tfrac12c'n_0^3+\frac{n_0^3\delta_n}{c-c'}\Big[\tfrac12\delta_n-\tfrac23c\Big].
\eeqa
By Definition \ref{close}, $\delta_n\le\frac c3$ so the term in square brackets is $<0$. Thus both terms are $\le0$ and the only way the inequality can hold is if $c'=0=\delta_n$.
\end{proof}

\begin{Prop}\label{T}
	If $w_n=\vno$ and $c'=0$ then $E_1\cong T(-n_0)[1]$ for some $T\in\Pic\_0(X)$ \eqref{Pic0}.
\end{Prop}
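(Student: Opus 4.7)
The plan is to first pin down $s' = d' = 0$ (so that $[E_1] = [\cO_X(-n_0)[1]]$ in $K(X)$), and then identify $E_1$ with a genuine shift of a line bundle by analyzing its cohomology sheaves. Since $w_n = \vno$ forces $n = n_0$, $s = s_0$, $d = d_0$, the class \eqref{vec} of $E_1$ simplifies to $\ch\_H(E_1) = \(-1, n_0, -\tfrac12 n_0^2 + s', \tfrac16 n_0^3 + d'\)$ for rationals $s', d'$ lying in the lattice $\ch(K(X)) \subset H^*(X,\Q)$.

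For the first step, I would substitute $c' = 0 = \delta_n$ into the chain \eqref{BGd'}--\eqref{dd'-1}. The LHS of \eqref{BGd'} collapses to $-n_0^2 s' + \frac{2 n_0 s'(s_0 - s')}{c}$, and the bound $3 n_0\bigl[d_0 - \frac{(s_0 - s')^2}{2c} - \frac{c^3}{24}\bigr]$ from below, after dividing by $n_0$, rearranges to an inequality of the form $n_0 s' \le C$ with $C$ depending only on $c, s_0, d_0$. Combined with $s' \ge 0$ from Bogomolov $\Delta_H(E_1) = 2 s' H^6 \ge 0$ and the discreteness of the lattice $\ch(K(X))$, this forces $s' = 0$ for $n_0 \gg 0$. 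Plugging $s' = 0$ back into \eqref{BGd'} annihilates the LHS identically, so the first inequality $\mathrm{LHS} \ge 3 n_0 d'$ delivers $d' \le 0$.

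With $s' = 0$ the point $\Pi(E_1) = (-n_0, \tfrac12 n_0^2)$ lies on $\partial U$ and $\Delta_H(E_1) = 0$. Since $E_1 \in \Ab$ on an interval of the wall $\ell$ just to the right of $b = -n_0$, the slope conditions \eqref{Abdef} combined with discreteness of $\mu_H$ give $\mu_H^+(\cH^{-1}(E_1)) \le -n_0$. Using $\ch_1(E_1).H^2 = n_0 H^3$ and effectivity of $\ch_1$ of the torsion sheaf $\cH^0(E_1)$, this sharpens to $\mu_H(\cH^{-1}(E_1)) = -n_0$ and $\ch_1(\cH^0(E_1)).H^2 = 0$, so $Q := \cH^0(E_1)$ is supported in dimension $\le 1$. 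Writing $\cH^{-1}(E_1) = L \otimes I_Z$ with $L := \cH^{-1}(E_1)^{\vee\vee}$ a line bundle of slope $-n_0$ and $Z$ a subscheme of dimension $\le 1$, expansion of $\ch_2(E_1).H = -\tfrac12 n_0^2 H^3$ yields
\[
\tfrac12 c_1(L)^2.H + \tfrac12 n_0^2 H^3 \= [Z]_1.H + \ch_2(Q).H \ \ge\ 0,
\]
while the Hodge index inequality gives $c_1(L)^2.H \le n_0^2 H^3$. These conspire to force equality throughout: $c_1(L) = -n_0 H$ in $H^2(X, \Q)$, $Z$ is zero-dimensional, and $Q$ is zero-dimensional.

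Finally, expanding $\ch_3(E_1) = (\tfrac16 n_0^3 + d') H^3$ using $\ch_3(L) = -\tfrac16 n_0^3 H^3$ (since $c_1(L)^3 = -n_0^3 H^3$ in $H^6(X,\Q)$) gives $d' H^3 = \ell(Z) + \ell(Q) \ge 0$. Combined with $d' \le 0$ from the first step this forces $d' = 0$, $Z = \emptyset$ and $Q = 0$. Hence $E_1 = L[1]$ and $T := L(n_0)$ has $c_1(T) = 0$ in $H^2(X,\Q)$, i.e. $T \in \Pic\_0(X)$. The main obstacle is the first step: one has to track the chain \eqref{BGd'}--\eqref{dd'-1} precisely enough to extract $s' \le C/n_0$ rather than the cruder $s' = O(n_0)$ bound of \eqref{s'}, since only a $1/n_0$-scale bound is sharp enough for the discreteness of $\ch(K(X))$ to force $s' = 0$.
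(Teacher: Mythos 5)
Your proposal is correct in substance, and its first half is exactly the paper's argument: substituting $c'=0=\delta_n$ into \eqref{BGd'}, using $s'\ge0$ and the bounded denominators of $s'$ to force $s'=0$ once $n_0\gg0$. Where you genuinely diverge is after $s'=0$. The paper notes that $s'=0$ is precisely $\Delta_H(E_1)=0$, cites \cite[Corollary 3.10]{BMS} (or \cite[Lemma 3.2]{FT3}) to conclude $E_1$ has no walls and is therefore $\nubw$-semistable throughout $U$, and then invokes \cite[Lemma 2.6]{FT3} to identify any such object as $T(-n_0)[1]$. You instead extract the extra inequality $d'\le0$ from the first inequality of \eqref{BGd'} and run a self-contained Chern-character analysis ($\cH^{-1}(E_1)=L\otimes I_Z$, Hodge index, nonnegativity of the $0$-dimensional contributions of $Z$ and $Q=\cH^0(E_1)$) to force $d'=0$, $Z=\emptyset$, $Q=0$. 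This buys self-containedness --- you only use membership of $\Ab$ for $b$ slightly greater than $-n_0$ together with the single Bogomolov--Gieseker evaluation already performed in \eqref{BGd'}, rather than semistability of $E_1$ over all of $U$ --- at the cost of redoing by hand what \cite[Lemma 2.6]{FT3} packages.

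One correction is needed: your displayed $\ch_2$ identity has the wrong sign. Since $[E_1]=[Q]-[L\otimes I_Z]$, we have $\ch_2(E_1).H=\ch_2(Q).H-\tfrac12c_1(L)^2.H+[Z]_1.H$, so equating this with $-\tfrac12 n_0^2H^3$ gives
\[
[Z]_1.H+\ch_2(Q).H\=\tfrac12\,c_1(L)^2.H-\tfrac12\,n_0^2H^3,
\]
with a minus sign on the right. As you wrote it (with $+$), the Hodge index bound $c_1(L)^2.H\le n_0^2H^3$ forces nothing; with the correct sign the right-hand side is $\le0$ while the left is a sum of nonnegative terms, which is what actually makes everything collapse to equality. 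The remainder ($c_1(L)=-n_0H$ in $H^2(X,\Q)$, then $d'H^3=\ell(Z)+\ell(Q)\ge0$ against $d'\le0$, hence $E_1=L[1]$ with $L(n_0)\in\Pic_0(X)$) then goes through as you describe.
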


\begin{proof}
	When $c'=0$ and $w_n = \vno$ the inequality \eqref{BGd'} becomes
	$$
	-s'n_0^2+\frac{2n_0s'(s_0-s')}{c}
 \ \ge\ 3n_0\left(d_0-\frac{(s_0-s')^2}{2c}-\frac1{24}c^3\right).
	$$
	Discarding terms of $O(n_0)$, this becomes 
	\begin{equation*}
	n_0s' \left(-n_0  - \frac{s'}{2c}  -\frac{s_0}c   \right)\ \ge\ O(n_0).
	\end{equation*}
Since $s'\ge0$ \eqref{s'} this forces $s'=0$, which is $2\Delta_H(E_1) = 0$. By \cite[Corollary 3.10]{BMS} or \cite[Lemma 3.2]{FT3} this implies there is no wall for $E_1$ in $U$. Thus $E_1$ is $\nubw$-semistable for any $(b,w) \in U$, so we may apply \cite[Lemma 2.6]{FT3} to conclude it is $T(-n_0)[1]$ for some $T\in\Pic\_0(X)$.
\end{proof}


The first case of the next result will allow us to do an induction on $\ch_1\!.\;H^2$, which decreases on passing from $E$ to $E_1$ since $\ch_1(E_1).H^2<\ch_1(w_n).H^2$ by \eqref{vec}. (We will deal with the second case afterwards.) Let $b_1<b_2$ be the $b$-values of the intersection points $\ell\cap\partial U$.

\begin{Prop}\label{prop.c' positive}
	If $c' > 0$, then either
	\begin{itemize}
	\item $(b,w)\in U([E_1])$ and $[E_1]$ is close to $\vno$, or
	\item $\ch_1(E_1).H^2 +b_1H^3<\min(c,\,b_2 -b_1)H^3$.
 This always holds when $c'<\delta_n+\frac23c$.
	\end{itemize}
\end{Prop}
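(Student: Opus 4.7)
The proof splits naturally at the threshold $c'=\delta_n+\tfrac{2c}{3}$. The motivation is this: for any $c'\in(0,c)$, one can try to realise $[E_1]$ as some $\tilde w_{\tilde n}$ in the sense of Definition \ref{close} by setting $\tilde n:=n+c'-c$, so that the ``$c$''-entry of the rank-$0$ class $\tilde w:=[E_1]+[\cO_X(-\tilde n)]$ equals $c$. A direct computation gives $\delta_{\tilde n}:=n_0-\tilde n=\delta_n+c-c'$, so the condition $\delta_{\tilde n}\in[0,\tfrac c3]$ required by Definition \ref{close} is equivalent to $c'\in[\delta_n+\tfrac{2c}{3},\,\delta_n+c]$. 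The upper bound is automatic from $c'<c$ and $\delta_n\ge0$, while the lower bound is precisely the complement of the Case~2 hypothesis $c'<\delta_n+\tfrac{2c}{3}$. This is what dictates the split.

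\emph{Case 1 ($c'\ge\delta_n+\tfrac{2c}{3}$).} I would verify the three bullets of Definition \ref{close} for $[E_1]$ with the new parameters $(\tilde n,\tilde s,\tilde d)$, where
\[
\tilde s\ =\ \tfrac{\tilde n^2-n^2}{2}+s',\qquad \tilde d\ =\ \tfrac{n^3-\tilde n^3}{6}+d'.
\]
The Bogomolov inequality $\Delta_H(E_1)\ge0$ gives a lower bound on $s'$, and the slope estimate \eqref{s-1} (valid because $\ell$ lies on or above $\ell(w_n)$) gives an upper bound; substituting these and using the closeness bounds on $(s,d)$ from Definition \ref{close} should translate into the required bounds on $\tilde s$. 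Similarly, the Bogomolov--Gieseker bound \eqref{BGd'} controls $d'$ and hence $\tilde d$. For the membership $(b,w)\in U([E_1])$: the point $(b,w)$ lies on $\ell$ passing through $\Pi([E_1])$, is to the right of $\Pi([E_1])$ because $E_1\in\Ab$ forces $\ch_1^{bH}(E_1).H^2\ge0$ (i.e.\ $b\ge-(n+c')$), and lies on or above $\ell([E_1])$ upon combining this with the slope bound $\mathrm{slope}(\ell)\ge\mathrm{slope}(\ell_f)=\mathrm{slope}(\ell([E_1]))$. The latter slope bound itself follows from $(b,w)$ lying above $\ell(w_n)$ and to the right of $\Pi(w_n)$.

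\emph{Case 2 ($c'<\delta_n+\tfrac{2c}{3}$).} I must prove $(n+c'+b_1)H^3<\min(c,\,b_2-b_1)H^3$. Since $\Delta_H(E_1)\ge0$ and $\ch_0(E_1)=-1$, a sign computation shows $\Pi([E_1])=(-(n+c'),\,\tfrac{n^2}{2}-s')$ lies on or below $\partial U$; as $(b,w)\in\ell\cap U$ lies to the right of $\Pi([E_1])$, the leftmost crossing $b_1$ of $\ell\cap\partial U$ satisfies $b_1\ge-(n+c')$, so the LHS is already $\ge0$. For the upper bound I would compare $\ell$ to $\ell_f$: since $\ell$ lies on or above $\ell_f$ on $[b_1^f,b_2^f]$ and has slope bounded below by $\mathrm{slope}(\ell_f)\approx-\tfrac{n_0}{4}$, a short planar computation shows $b_1\le b_1^f+O(1)$, giving $n+c'+b_1\le\tfrac c3+(c'-\delta_n)+O(n_0^{-1})$. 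The hypothesis $c'-\delta_n<\tfrac{2c}{3}$ then delivers $<c$. The other half of the minimum is free: by \eqref{b1b2} and its analogue for $\ell$, $b_2-b_1=\tfrac{3n_0}{2}+O(1)\gg c$, so $\min(c,\,b_2-b_1)=c$ for $n_0\gg0$ as in \eqref{nzero}.

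The main obstacle is the bookkeeping in Case~1: translating the $(s',d')$ bounds through the substitution $(\tilde n,\tilde s,\tilde d)$ and checking each inequality of Definition \ref{close} for the new parameters, exploiting $c'-\delta_n\ge\tfrac{2c}{3}$ to absorb error terms into the choice \eqref{nzero} of $n_0\gg0$. Case~2 is then a straightforward planar geometry calculation once the slope bound on $\ell$ is in hand.
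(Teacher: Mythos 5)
Your Case 2 ($c'<\delta_n+\frac23c$) is essentially the paper's argument: $b_1\le b_1^f=-n_0+\frac13c+O(1/n_0)$ and $b_2-b_1\ge\frac32n_0+O(1)$ from the fact that $\ell$ lies on or above $\ell_f$, combined with $n+c'\le n_0+\frac23c-\frac1{3H^3}$ (note you do need the discreteness $c',\delta_n,c\in\frac1{H^3}\Z$ to turn the strict hypothesis $c'<\delta_n+\frac23c$ into a definite gap of $\frac1{3H^3}$; otherwise your estimate only gives $n+c'+b_1<c+O(n_0^{-1})$, which is not quite $<c$).

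Case 1 has a genuine gap. You assert that $c'\ge\delta_n+\frac23c$ always forces the first bullet, but the proposition is an ``either/or'' there too, and the paper's proof splits this case further according to whether the wall $\ell$ lies strictly above or on/below the Joyce--Song wall $\ell_{\js}(w_n)$ of slope $\frac sc$. If $\ell$ is strictly above $\ell_{\js}(w_n)$ one proves the \emph{second} bullet (using that $\ell_{\js}(w_n)$ meets $\partial U$ at $b_1'=-n$ with $\ch_1(w_n).H^2+b_1'H^3=cH^3$); only when $\ell$ is on or below $\ell_{\js}(w_n)$ does one get closeness of $[E_1]$. This sub-dichotomy is not a convenience: the lower bound on $\tilde s$ required by Definition \ref{close} is $\tilde s\ge-n_0\delta_n'-|s_0|$ with $\delta_n'=\delta_n+c-c'$, and your proposed input $\Delta_H(E_1)\ge0$, i.e. $s'\ge-nc'-\frac12c'^2\approx-n_0c'$, only yields $\tilde s\ge-n_0c+O(n_0\delta_n)+O(1)$, which falls short by roughly $n_0(c'-\delta_n)\ge\frac23cn_0$ precisely in the range you are considering. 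The bound actually needed is $s'\ge\frac{c'}{c}s$, which is \emph{equivalent} to the gradient inequality $\frac{s-s'}{c-c'}\le\frac sc$, i.e. to $\ell$ lying on or below $\ell_{\js}(w_n)$; the same comparison is also needed to control $\big(\frac{s-s'}{c-c'}\big)^2$ in the lower bound for $\tilde d$. So your Case 1 both claims too much and lacks the estimate that would prove what is true. The remaining ingredients you list (the slope bound \eqref{s-1} for the upper bound on $\tilde s$, the Bogomolov--Gieseker control \eqref{dd'}/\eqref{BGd'} of $d'$, and the argument that $(b,w)\in U([E_1])$ because $\ell$ passes through $\Pi([E_1])$ with slope $\ge$ that of $\ell_f$) do match the paper once the Joyce--Song sub-case is inserted.
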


\begin{proof} We consider two cases depending on the size of $c'$. \smallskip

\textbf{Case (i):} $c'<\delta_n+\frac23c$. Since $c', \delta_n, c \in \frac{1}{H^3}\mathbb{Z}$ this is $c'\le\delta_n+\frac23c-\frac1{3H^3}$, so
\beq{trayn}
n +c'\=n_0-\delta_n+c'\ \le\ n_0+\tfrac23c-\tfrac1{3H^3}.
\eeq
Since $(b,w)\in U(w_n)$ lies on or above $\ell_f$ by Definition \ref{close}, we have
\beq{bbbb}
b_1\ \leq\ b_1^f\=-n_0 + \tfrac13c + O\(\tfrac1{n_0}\) \quad\text{and}\quad
b_2-b_1\ \ge\ b_2^f-b_1^f\=\tfrac32n_0+O(1)
\eeq
by \eqref{b1b2}. Combining the first with \eqref{trayn} gives
\beq{lastn}
	\tfrac1{H^3}\ch_1(E_1).H^2 + b_1 \= n +c' +b_1\ \leq\ n_0 + \tfrac{2}{3}c - \tfrac1{3H^3} -n_0 + \tfrac13c + O\(\tfrac{1}{n_0}\)\ <\ c.
\eeq
Since \eqref{bbbb} shows $c<b_2-b_1$, this means $\ch_1(E_1).H^2 +b_1H^3<\min(c,\, b_2 -b_1)H^3$. \smallskip
	
\textbf{Case (ii):} $c'\ge\delta_n + \frac23c$. Since $\delta_n \geq 0$ and $c-c' >0$ this gives $c'\in\big[\frac{2}{3}c,c\)$. Let $\ell_{\js}(w_n)$ denote the \emph{Joyce-Song wall} for $w_n$ connecting
\beq{jswall}
\Pi(w_n)\=\(\!-n -c,-s+\tfrac12n ^2\)\ \text{ to }\ \Pi\(\cO_X(-n)\)\=\(\!-n ,\tfrac12n ^2\).
\eeq
This is easily calculated to intersect $\partial U$ at $b_1'=-n $ and $b_2'=n +2\frac{s}c$, so that $\ch_1(w_n).H^2 +b_1'H^3=cH^3=\min(c,b_2'-b_1')H^3$. 
If $\ell$ lies strictly above $\ell_{\js}$ it follows that
$$
\ch_1(w_n).H^2 +b_1H^3\ <\ \ch_1(w_n).H^2 +b_1'H^3\=\min(c,b'_2-b'_1)H^3\ \le\ \min(c,b_2-b_1)H^3,
$$
giving the second case of the Proposition. \medskip

So we now assume $\ell$ lies on or below $\ell_{\js}(w_n)$ and try to show that $[E_1]$ is close to $\vno$ by rewriting $\ch\_H(E_1)$ as
$$
\ch\_H(E_1)\=\(0,c,\tilde s,\tilde d\,\)-\ch\_H\!\(\cO_X(-n')\)\= \left(-1,\ n'+c,\ -\tfrac12{n'\;^2} + \tilde{s},\ \tfrac16{n'\;^3} +\tilde{d}\,\right),
$$
where $n' := n-c+c'\in\frac1{H^3}\Z$ and $\tilde{s}, \tilde{d} \in\Q$ are defined by
\begin{equation}\label{s}
-\tfrac12(n-c+c')^2 +\tilde{s} \= -\tfrac12n^2 +s' \quad\so\quad  \tilde{s} \= s'-(c-c')(n_0-\delta_n) + \tfrac12(c-c')^2,
\end{equation}
\begin{equation}\label{d}
\tfrac16(n-c+c')^3 +\tilde{d} \= \tfrac16n^3 +d' \quad \so \quad \tilde{d}\=d'+\tfrac12n^2(c-c') -\tfrac12n(c-c')^2 + \tfrac{1}{6}(c-c')^3.
\end{equation}
Setting $\delta_n' := n_0-n'=\delta_n + c-c'\in\frac1{H^3}\Z$, the inequality $c'\ge\delta_n + \frac23c$ gives $\delta_n'\in\(0,\frac c3\big]$. This is one of the bounds required by Definition \ref{close}; we need to show that $\tilde s,\tilde d$ satisfy the other two.

By \eqref{slope} the slopes of both $\ell_f$ and $\ell(w_n)$ of Figure \ref{figUwm} are $>-\frac14{n_0} -|s_0|H^3$. Since $\ell$ lies between $\ell(w_n)$ and $\ell_{\js}(w_n)$ \eqref{jswall}, and all three pass through $\Pi(w_n)$, their gradients are ordered
\begin{equation}\label{order}
    -\frac{n_0}{4} -|s_0|H^3\ <\ \frac{s -s'}{c-c'} \ \leq\ \frac{s}{c}\,,
\end{equation}
which implies 
\beq{ord2}
    \frac{c'}{c}s\ \le\ s'\ <\  \left(\tfrac14n_0 + |s_0|H^3\right)(c-c') +s.
\eeq
Together with \eqref{s} and $s\ge-n_0\delta_n  -|s_0|$ from Definition \ref{close}, this bounds $\tilde s$ below by
\beqa
    \tilde{s} &\ge& \frac{c'}{c}s - (c-c')(n_0-\delta_n) + \tfrac12(c-c')^2\\
    &>& \(\!-n_0\delta_n  -|s_0|\) -(c-c')n_0\\
    &=& -n_0\delta'_n -|s_0|. 
\eeqa
Similarly using \eqref{s}, \eqref{ord2} and  $s\le-\frac34n_0\delta_n+\(c+|s_0|H^3\)\delta_n+s_0$ from Definition \ref{close}, we bound $\tilde s$ above by
\beqa
   \tilde{s} &<& \left(\tfrac14n_0 + |s_0|H^3\right)(c-c')+s  - (c-c')(n_0-\delta_n) + \tfrac12(c-c')^2  \\
   &=& -\tfrac{3}{4}n_0(c-c') + (c-c')\left(|s_0|H^3 + \delta_n + \tfrac12(c-c')   \right) +s\\
   &<& -\tfrac{3}{4}n_0(c-c') + (c-c')\left(|s_0|H^3 + c   \right) -\tfrac{3}{4}n_0\delta_n + \delta_n(c+|s_0|H^3) +s_0\\
   &=& -\tfrac{3}{4}n_0\delta_n' + \delta_n'\left(|s_0|H^3 + c   \right) +s_0,
\eeqa
where in passing from the second line to the third we used that $\delta_n$ and $c-c'$ both lie in $\big[0,\frac c3\big]$, so $\delta_n+\frac12(c-c')<c$.
Thus $\tilde s$ satisfies Definition \ref{close}.\medskip

Now we turn to $\tilde d$. Combining \eqref{order} with the upper bound $s\le s_0$ of Definition \ref{close} gives $\(\frac{s-s'}{c-c'}\)^2\le\left(-\tfrac14n_0 -|s_0|H^3 \right)^2$. With \eqref{dd'} this gives
$$
    d' \ \ge\ d-\tfrac12(c-c')\left(\tfrac14n_0 +|s_0|H^3 \right)^2   - \tfrac{1}{24}(c-c')^3.
$$
Together with \eqref{d} this gives
$$
\tilde d\,\ge\,d-\tfrac12(c-c')\left(\tfrac14n_0 +|s_0|H^3 \right)^2\!-\tfrac{1}{24}(c-c')^3+\tfrac12(c-c')n^2 -\tfrac12(c-c')^2n + \tfrac{1}{6}(c-c')^3.
$$
Substituting $n=n_0-\delta_n$ and recalling that $\delta_n+\frac12(c-c')<c$,
\beqa
    \tilde{d}
    &\ge& d -\tfrac12(c-c')\left(\tfrac14n_0 +|s_0|H^3 \right)^2  +\tfrac12(c-c')n_0^2 -\delta_n(c-c')n_0 - \tfrac12(c-c')^2n_0 \\
    &>& d +\tfrac{15}{32}(c-c')n_0^2 -n_0(c-c') \left( \tfrac14|s_0|H^3 +\delta_n + \tfrac12(c-c')          \right) - \tfrac12(c-c')(s_0H^3)^2 \\
    &>& d +\tfrac{15}{32}(c-c')n_0^2 -n_0(c-c') \left( |s_0|H^3 +c\right) - (c-c')(s_0H^3)^2.
\eeqa
Comparing with $d\,\ge\,\frac{15}{32}n_0^2\delta_n -n_0\delta_n\(|s_0|H^3 +c\) -\delta_n\(s_0H^3\)^2 + d_0$ from Definition \ref{close} and using $\delta_n'=\delta_n+c-c'$ this finally gives
$$
\tilde d\ >\ \frac{15}{32}n_0^2\delta'_n -n_0\delta'_n\(|s_0|H^3 +c\) -\delta'_n\(s_0H^3\)^2 + d_0.
$$
Thus $\tilde d$ satisfies Definition \ref{close}.
Finally since the slope of $\ell$ is $\ge$ that of $\ell_f$ it is also $\ge$ that of $\ell([E_1])$. So $(b,w)$ lies on or above $\ell([E_1])$ and therefore in $U([E_1])$ as claimed.
\end{proof}

We can phrase the second possibility of Proposition \ref{prop.c' positive} by saying that $(b,w)$ lies in $E_1$'s \emph{safe area $U^s_{[E_1]}\subset U$} defined as follows (see also Figure \ref{safef}).
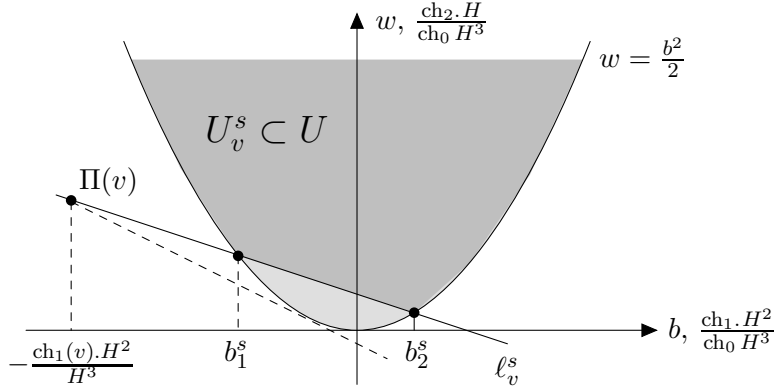
\begin{figure}[h]
	\begin{centering}
		\definecolor{zzttqq}{rgb}{0.27,0.27,0.27}
		\definecolor{qqqqff}{rgb}{0.33,0.33,0.33}
		\definecolor{uququq}{rgb}{0.25,0.25,0.25}
		\definecolor{xdxdff}{rgb}{0.66,0.66,0.66}
		
		\begin{tikzpicture}[line cap=round,line join=round,>=triangle 45,x=1.0cm,y=0.9cm]
		
		\draw[->,color=black] (-4.4,0) -- (4,0);
		\draw  (4, 0) node [right ] {$b,\,\frac{\ch_1\!.\;H^2}{\ch_0H^3}$};

		\fill [fill=gray!25!white] (0,0) parabola (3,4) parabola [bend at end] (-3,4) parabola [bend at end] (0,0);
		\fill[fill=gray!50!white](-3, 4)--(-2.35,2.4)--(-1.58,1.1)--(.76,.26)--(1.5,1)--(2.3,2.3)--(3,4); 

		\draw(0,0) parabola (3.1,4.27); 
		\draw(0,0) parabola (-3.1,4.27); 
		\draw  (3.8 , 3.6) node [above] {$w= \frac{b^2}{2}$};
		
		\draw[->,color=black] (0,-.8) -- (0,4.7);
		\draw[dashed,color=black](-3.8,1.9)--(.4,-0.42);
		\draw  (1, 4.1) node [above ] {$w,\,\frac{\ch_2\!.\;H}{\ch_0H^3}$};
		
		\draw [color=black] (-4, 2) -- (2,-.2);
		\draw[color=black](.76,.26)--(.76,0);
		\draw[dashed,color=black](-1.58,1.1)--(-1.58,0);
		\draw[dashed,color=black](-3.8,1.9)--(-3.8,0);
		\draw  (-3.3, 2.2) node {$\Pi(v)$};
		\draw  (-1.2, 2.5) node [above] {\Large{$U^s_v\subset U$}};
		\draw  (.8, 0) node [below] {$b_2^s$};
		\draw  (-1.6, 0) node [below] {$b_1^s$};
		\draw  (-3.8, 0) node [below] {$-\frac{\ch_1(v).H^2}{H^3}$};
		\draw  (2, -.6) node {$\ell^s_v$};
		\begin{scriptsize}
		\fill (-3.8,1.92) circle (2pt);
		\fill (.76,.26) circle (2pt);
		\fill (-1.58,1.1) circle (2pt);
		
		\end{scriptsize}
		
		\end{tikzpicture}
		
		\caption{The safe area $U^s_v\subset U$ of a rank $-1$ class $v$ with $\Delta_H(v)>0$}
		
		\label{safef}
		
	\end{centering}
\end{figure}

Let $v\in K(X)$ be any class of rank $-1$ and $\Delta_H(v)\ge0$ . There is a unique line passing through $\Pi(v)$ which is tangent to $\partial U=\big\{w=\frac12b^2\big\}$ at a point to the right of $\Pi(v)$ (or at $\Pi(v)$ if $\Delta_H(v)=0$). Rotating the line anticlockwise about $\Pi(v)$, it intersects $\partial U$ in two points with $b$-values $b_1^s\le b_2^s$ such that $b_2^s-b_1^s$ moves monotonically through $[0,\infty)$, while the horizontal distance $b_1^s+\ch_1(v).H^2/H^3$ from $b_1^s$ to $\Pi(v)$ decreases to 0. Thus there is a unique line $\ell^s_v$ passing through $\Pi(v)$ such that 
\begin{equation}\label{safe}
\ch_1(v).H^2 +b_1^sH^3 \= \min(c,\,b_2^s -b_1^s)H^3.  
\end{equation}
For lines through $\Pi(v)$ strictly above $\ell^s_v$, with $b$-values $b_1'<b_2'$ at the two intersections points with  $\partial U$, the equality is replaced by the inequality
\beq{safe<}
\ch_1(v).H^2 +b_1'H^3\ <\ \min(cH^3,\,b_2' -b_1')H^3,
\eeq
while below we get the opposite inequality $>$. A similar definition was used in \cite[Section 3]{FT3} for rank $\ge0$ classes. Here we include the extra $cH^3$ term in order to work above the Joyce-Song wall $\ell_{\js}(w_n)$ for $w_n$, which we observed in \eqref{jswall} satisfies \eqref{safe}, so that
$$
\ell_{\js}(w_n)\=\ell^s_{w_n }.
$$

\begin{Def}\label{defsafe}
For a class $v \in K(X)$ of rank $-1$ with $\Delta_H(v) \geq 0$ the safe area $U^s_v \subset U$ is the set of $(b,w)$ which lie strictly above $\ell^s_v$ and to the right of $\Pi(v)$, i.e. $bH^3 > -\ch_1(v).H^2$. 
\end{Def}

If we manage to wall cross into the safe area, we stay there, in the following sense.

\begin{Lem}\label{lem.safe area}
Suppose $\rk(v)=-1,\ (b,w)\in U^s_v$ and $E$ is $\nubw$-semistable of class $v$. Then any $\nubw$-destabilising sequence 
$E' \hookrightarrow E \twoheadrightarrow E''$ satisfies 
	\begin{itemize}
		\item one of the factors $E_0$ is a rank zero sheaf with $\ch_1(E_0).H^2\in(0, cH^3)$, and 
		\item the other factor $E_1$ has rank $-1$ with $(b,w)$ inside its safe area $U_{[E_1]}^s$. 
	\end{itemize}
\end{Lem}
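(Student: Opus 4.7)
The strategy is to mirror the rank-partitioning of Lemma \ref{lem.destabilising objects}, with the explicit length bound of order $n_0$ there replaced by the safe area inequality \eqref{safe<}. I would let $\ell$ denote the line through $(b,w)$ and $\Pi(v)$, with $b$-values $b_1 < b_2$ at the two points of $\ell \cap \partial U$, and label the destabilising factors so that $\rk(E_0) \geq 0 > \rk(E_1)$, setting $r_i := \rk\cH^{-1}(E_i)$. Since each $E_i$ lies in $\cA_{b'}$ for every $(b',w') \in \ell \cap U$, applying \eqref{Abdef} in the limits $b' \to b_1^+$ and $b' \to b_2^-$ bounds $\ch_1(\cH^{-1}(E_i)).H^2 \leq b_1 r_i H^3$ and $\ch_1(\cH^0(E_i)).H^2 \geq b_2(\ch_0(E_i)+r_i)H^3$.

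Subtracting these and summing over $i = 0, 1$ yields
\[
\ch_1(v).H^2 + b_1 H^3\ \geq\ (b_2 - b_1)(r_0 + r_1 - 1) H^3,
\]
whose left side is strictly bounded by $(b_2 - b_1) H^3$ by \eqref{safe<}. This forces $r_0 + r_1 \leq 1$, and since $r_1 \geq 1$ (because $\rk \cH^0(E_1) \geq 0$ while $\rk(E_1) \leq -1$), we conclude $r_0 = 0$, $r_1 = 1$. Hence $\rk(E_0) = 0$, $\rk(E_1) = -1$, and $E_0$ is a rank zero sheaf by \eqref{Abdef}. Finiteness of $\nubw(E_0) = \nubw(E)$ then gives $\ch_1(E_0).H^2 > 0$, while specialising the displayed inequality to $i = 1$ yields $\ch_1(E_1).H^2 + b_1 H^3 \geq 0$; subtracting this from the other half $\ch_1(v).H^2 + b_1 H^3 < cH^3$ of \eqref{safe<} gives $\ch_1(E_0).H^2 < cH^3$.

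For $(b,w) \in U^s_{[E_1]}$: since $\Pi(E_1)$ also lies on $\ell$ (as $E_1$ is a destabilising factor), the same $b_1 < b_2$ apply to $[E_1]$. The strict bound $\ch_1(E_1).H^2 + b_1 H^3 < \min(c, b_2 - b_1) H^3$ follows by subtracting the positive $\ch_1(E_0).H^2$ from $\ch_1(v).H^2 + b_1 H^3$ and invoking \eqref{safe<} for $v$; and $bH^3 > -\ch_1(E_1).H^2$ follows from $b > b_1$ combined with $\ch_1(E_1).H^2 \geq -b_1 H^3$.

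The main obstacle is not any single calculation — each step is routine — but rather in recognising that the defining condition \eqref{safe} of the safe area, with its particular $\min(c, b_2 - b_1) H^3$ on the right, is simultaneously strong enough to force $r_0 + r_1 \leq 1$ in the rank partition and to transfer the safe area property from $v$ to $[E_1]$ via the identity $\ch_1(E_1).H^2 = \ch_1(v).H^2 - \ch_1(E_0).H^2$. This dual role of the minimum is what makes the lemma self-sustaining under iteration.
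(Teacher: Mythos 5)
Your proposal is correct and follows essentially the same route as the paper's proof: label the factors by rank, apply the definition of $\Ab$ at the two endpoints $b_1,b_2$ of $\ell\cap\partial U$ to bound $\ch_1$ of each cohomology sheaf, sum to force $r_0+r_1\le 1$, and then use \eqref{safe<} for $v$ together with $\ch_1(E_1).H^2<\ch_1(v).H^2$ to transfer the safe-area condition to $[E_1]$. The only cosmetic differences are that the paper routes the rank bookkeeping through the long exact sequence \eqref{LES} and records a strict inequality for the $i=1$ factor, neither of which changes the argument.
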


\begin{proof}
	The proof is similar to that of Lemma \ref{lem.destabilising objects}. One of the objects $E',E''$ has rank$\,<0$; call it $E_1$. The other $E_0$ has rank$\,\ge0$. Let $r_i:=\rk\!\(\cH^{-1}(E_i)\)\ge0$. Then since $E_i\in\Ab$ for $b\in[b_1^s,b_2^s]$, 
	\beqa
	\ch_1\!\(\cH^{-1}(E_i)\).H^2 &\le& b_1^sr_iH^3 \\
	\text{and}\quad
	\ch_1\!\(\cH^0(E_i)\).H^2 &\ge& b_2^s(\ch_0(E_i)+r_i)H^3,
	\eeqa
	with the first inequality strict for $i=1$. Subtracting gives the inequality
	\begin{equation}\label{strict}
	    	\ch_1(E_i).H^2\ \geq\ b_2^s\ch_0(E_i)H^3+(b_2^s -b_1^s)r_iH^3,
	\end{equation}
	which is again strict when $i=1$. Adding over $i=0,1$ implies
	\beq{name}
	\ch_1(E).H^2 - b_2^s\ch_0(E)H^3 
	\ >\ (b_2^s -b_1^s)(r_0+r_1)H^3.
	\eeq
Since $\ch_0(E)=-1$ the left hand side is
$$
\ch_1(E).H^2 +b_1^sH^3+(b_2^s-b_1^s)H^3\ \stackrel{\eqref{safe<}}{<}\ 2(b_2^s -b_1^s)H^3,
$$
so by \eqref{name} this gives $r_0+r_1<2$. Therefore we can repeat the argument \eqref{LES} to show that $E_0$ is a sheaf of rank zero and  $E_1$ is a complex of rank $-1$ with rank zero $\cH^{0}(E_1)$.
	
	Thus $\ch_1(E_0).H^2\ge0$, and it cannot be zero since $\nubw(E_0)=\nubw(E)<+\infty$. So
	\beq{ineq}
	\ch_1(E_0).H^2\ >\ 0\quad\text{and}\quad \ch_1(E_1).H^2\ <\ \ch_1(E).H^2.
	\eeq
	The line through $\Pi(E)$ and $(b,w)$ passes through the safe area $U^s_{[E]}$ so satisfies \eqref{safe<}. Hence by the second inequality of \eqref{ineq} it also satisfies \eqref{safe<} for $v=[E_1]$. Since it also goes through $\Pi(E_1)$, we conclude that it lies in the safe area $U^s_{[E_1]}$.
	
	Finally $\ch_1^{b_1^sH}(E).H^2\le cH^3$ by \eqref{safe} and $\ch_1^{b_1^sH}(E_1).H^2> 0$ by \eqref{strict}, so
	\[
	\ch_1(E_0).H^2 \= \ch_1^{b_1^sH}(E_0).H^2 \= \ch_1^{b_1^sH}(E).H^2 - \ch_1^{b_1^sH}(E_1).H^2\ <\ cH^3. \qedhere
	\]
\end{proof}

\subsection{Summary}\label{summary} We summarise what we have proved so far. (1) and (2) are the content of Lemma \ref{lem.destabilising objects}, (3) is Proposition \ref{T}, (4) is Proposition \ref{c' negative} and (5) is Proposition \ref{prop.c' positive}.

\begin{enumerate}
    \item[(1)] Any $\nubw$-semistable object $E$ of class $\vno$ is a 2-term complex with $\rk\!\(\cH^{-1}(E)\)=1$ and $\rk\!\(\cH^0(E)\)=0$.
\item[(2)] If $E$ is destabilised on a wall $\ell$ the possible semistable factors are a rank $-1$ complex $E_1$ with $\ch_1(E_1).H^2<\ch_1(E).H^2$, and rank 0 sheaves $E_0$.
\item[(3)] If $\ch_1(E_0).H^2=cH^3$ then our wall is $\ell_{\js}(\vno)$ \eqref{jswall} and $E_1\cong T(-n_0)[1]$.
\item[(4)] Otherwise $\ch_1(E_0).H^2\in(0,cH^3)$.
\item[(5)] As we increase $w$, moving above $\ell$, the above results (1),\,(2),\,(4) for $E$ also hold for $E_1$ (and its rank $-1$ destabilising factors, etc). This is because either
\begin{enumerate}
\item[(a)] $[E_1]$ is close to $\vno$ in the sense of Definition \ref{close} and $(b,w)\in U([E_1])$ by Figure \ref{pic}, so the results for $E$ apply to $E_1$ immediately, or
\item[(b)] $(b,w)$ is in the safe area for $E_1$, where the results follow from Lemma \ref{lem.safe area}.
\end{enumerate}
\end{enumerate}
\begin{figure}[h]
	\begin{centering}
		\definecolor{zzttqq}{rgb}{0.27,0.27,0.27}
		\definecolor{qqqqff}{rgb}{0.33,0.33,0.33}
		\definecolor{uququq}{rgb}{0.25,0.25,0.25}
		\definecolor{xdxdff}{rgb}{0.66,0.66,0.66}
		
		\begin{tikzpicture}[line cap=round,line join=round,>=triangle 45,x=1.0cm,y=1.0cm]
		
		\draw[->,color=black] (-4,0) -- (4,0);
		\draw  (4, 0) node [right ] {$b,\,\frac{\ch_1\!.\;H^2}{\ch_0H^3}$};
		
		\fill [fill=gray!25!white] (0,0) parabola (3,4) parabola [bend at end] (-3,4) parabola [bend at end] (0,0);
		
	    \fill[fill=gray!50!white](-2.6, 4)--(-2.6,3)--(-2.25,2.3)--(1.65,1.25)--(2.4,2.5)--(3,4); 
	   
		\draw  (0,0) parabola (3.1,4.27); 
		\draw  (0,0) parabola (-3.1,4.27); 
		\draw  (3.8 , 3.6) node [above] {$w= \frac{b^2}{2}$};
	
		\draw[->,color=black] (0,-.8) -- (0,4.7);
		\draw  (1, 4.3) node [above ] {$w,\,\frac{\ch_2\!.\;H}{\ch_0H^3}$};
		
		\draw [color=black] (-3.4, 1.95) -- (0.6, 4.18);
		\draw [color=black] (-3.5, 2.136) -- (2.5, .5);
		\draw [color=black] (-2.8, 2.45) -- (2.5, 1);
	    \draw [dashed] (-2.6,2.4) -- (-2.6,3);
	    \draw [thick] (-2.6,3) -- (-2.6,4.2);
	    
	    \draw  (-3.2, 1.7) node {{\small$\Pi(\vno)$}};
	    \draw  (-3.2, 2.7) node {{\small$\Pi(E_1)$}};
		\draw  (2, .35) node {$\ell_f$};
		\draw  (2.7, 1.3) node {$\ell([E_1])$};
        \draw  (1, 2.5) node [above] {\Large{$U([E_1])$}};
        \draw  (-1.2, 3.5) node [left] {$(b,w)$};
		\fill (-2.6,2.39) circle (2pt);
		\fill (-3.2,2.06) circle (2pt);
		\fill (-1.2,3.5) circle (2pt);

		\end{tikzpicture}
		
		\caption{Diagram showing why $(b,w)\in U([E_1])$ in 5(a) of the Summary}
		\label{pic}
	\end{centering}
\end{figure}
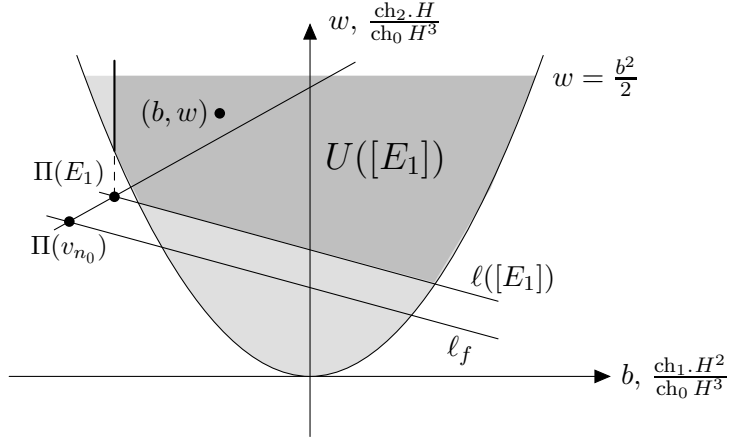

In particular $\ch_1\!.H^2$ drops any time we cross a wall and pass from a rank $-1$ complex to a rank $-1$ semistable factor (and then cross further walls for that, and pass to its rank $-1$ semistable factors, etc). This process terminates in finitely many steps because $\ch_1\!.\;H^2$ cannot drop below $n_0-\frac c3$ by Lemma \ref{lem.destabilising objects}. \medskip

%
%
%

We note in passing that, with a little further effort, the results of this Section can be used to prove
that on crossing a wall, those semistable objects which become unstable have Harder-Narasimhan filtrations of length 2 only. The semistable factors have ranks 0 and $-1$. However the rank 0 factor $E_0$ can be strictly semistable, with further decompositions into semistable factors, all with $\ch_H^{\le2}$ proportional to $\ch_H^{\le2}(E_0)$. All of these factors enter into the wall crossing formula, which therefore does not simplify much. So we omit the details of this result and do not use it.

\subsection{The quintic threefold} \label{Chunyi}
		Let $X\subset\PP^4$ be a smooth quintic 3-fold. Chunyi Li \cite[Theoreom 2.8]{Li} has proved the Bogomolov-Gieseker inequality \eqref{quadratic form} for a restricted subset of weak stability conditions on $X$: those $(b,w)$ satisfying 
	\begin{equation}\label{in for b, w}
	w\ >\ \tfrac{1}{2} b^2 + \tfrac{1}{2}\big(b - \lfloor b \rfloor\big)\big (\lfloor b \rfloor+1 - b \big). 
	\end{equation}
	In particular Conjecture \ref{conjecture} holds for any $w > \frac12b^2$ when $b \in \mathbb{Z}$. 

We check that knowing this restricted form of the Bogomolov-Gieseker inequality \eqref{quadratic form} is sufficient for our purposes. In this Section we have applied \eqref{quadratic form} to
	\begin{enumerate}
		 \item objects $E$ of class $\vno$ to find $\ell_f$ \eqref{lfeq}, and 
		\item the rank $-1$ destabilising objects $E_1$ of an object of class $w_n$ close to $\vno$ in \eqref{BGd'}. 
\end{enumerate}	
Now $(b,w)\in\ell_f\cap U$ in case (a), while in (b) we worked at a point $(b,w)\in U(w_m)$ which is therefore above $\ell_f\cap U$. But by \eqref{b1b2} we know $b$ takes an integer value along $\ell_f\cap U$, so by \eqref{in for b, w} we can apply \eqref{quadratic form} at $b\in\Z$ to prove the same results in cases (a) and (b).

The only other place we applied \eqref{quadratic form} is in \eqref{dd'}, where we invoked \cite[Lemma B.3]{FT2} for $E_0$. Setting $v=[E_0]$, the proof of that Lemma applied \eqref{quadratic form} along a line $\ell_v$ which intersects $\partial U$ at points $a_v < b_v$ with $b_v -a_v = \ch_1(v).H^2/H^3$. This is $\geq 1$ because $\Pic(X)=\Z.H$, so $b$ takes an integer value on the closure of $\ell_v\cap U$, so again \eqref{in for b, w} holds at a point of $\ell_v\cap U$.


%
%

\section{Wall crossing}\label{wcross}
We now assume the Calabi-Yau condition $K_X\cong\cO_X$. If $h^1(\cO_X)>0$ the Jac$\;(X)$ action on moduli spaces of sheaves forces $\J(v)=0$ whenever $\rk(v)>0$, so Theorem \ref{1} already holds.
So in this Section we further assume that $H^1(\cO_X)=0$ and prove Theorem \ref{2} for the class $\mathsf v$ \eqref{vdef} of rank 0 and dimension 2, by applying wall crossing to the class $\vno$ of \eqref{class vn}.

\subsection*{Wall crossing formula}
The work of Joyce-Song \cite{JS} can be used to define generalised DT invariants counting $\nubw$-semistable objects of class $v\in K(X)$ with $\nubw(v)<+\infty$,
\beq{DTJ}
\J_{b,w}(v)\ \in\ \Q.
\eeq
In \cite[Section 4 and Appendix C]{FT3} we described these invariants and showed they are well defined so long as $\nubw(v)<+\infty$. When $w\gg0$, so that $(b,w)$ lies in $v$'s large volume chamber of Proposition \ref{large}, we denote \eqref{DTJ} by $\J_{b,\infty}(v)$.

In \cite[Section 4]{FT3} we also showed the Joyce-Song wall crossing formula applies to the $\J_{b,w}(v)$ under the same $\nubw(v)<+\infty$ condition.

Suppose $\ell$ is the line through $\Pi(v)$ and a point $(b,w_0)\in U$ such that $\nu_{b, w_0}(v) < +\infty$.\footnote{If $\rk(v)=0$ and $\ch_1(v).H^2>0$ then $\ell$ is the line through $(b,w_0)$ of gradient $\ch_2(v).H\big/\!\ch_1(v).H^2$.} By the local finiteness of walls of Proposition \ref{prop. locally finite set of walls} we may choose
\beq{just}
(b,w_\pm)\,\in\,U\,\text{ just above and below the wall }\ell,
\eeq
in the sense that $(b,w_\pm)\not\in\ell$ and between $(b,w_-)$ and $(b,w_+)$ there are no walls for $v$, \emph{nor any of its finitely many semistable factors}, except for $\ell$.
Then the wall crossing formula is
\beq{WCF}
\quad\J_{b,w_-}(v)\=\J_{b,w_+}(v)\ +\hspace{-2mm}\mathop{\sum_{m\ge2,\ \alpha_1,\dots,\;\alpha_m\,\in\,C(\Ab),}}_{\sum_{i=1}^m\alpha_i\,=\,v,\ \nu_{b,w_0}(\alpha_i)\,=\,\nu_{b,w_0}(v)\,\forall i}\hspace{-3mm}C_{+,-}(\alpha_1,\dots,\alpha_m)\prod_{i=1}^m\J_{b,w_+}(\alpha_i).
\eeq
Here $C(\Ab)$ is the positive cone \eqref{CAb}, and the $C_{+,-}(\alpha_1,\dots,\alpha_n)\in\Q$ are universal coefficients  depending only on the Mukai pairings $\chi(\alpha_i,\alpha_j)$ and the relative sizes of the set of slopes $\{\nu\_{b,w_\pm}(\alpha_i)\}$. The term
$$
C_{+,-}(\alpha_1,\dots,\alpha_m)\prod_{i=1}^m\J_{b,w_+}(\alpha_i)
$$
is zero unless there is a $\nu\_{b,w_0}$-semistable object $E$ of class $v$ with $\nu\_{b,w_0}$-semistable factors of classes $\alpha_1,\dots,\alpha_m$. (Even most of these terms vanish --- although \eqref{WCF} is a countable sum, only finitely many terms in it are nonzero.) The formula reflects the different Harder-Narasimhan filtrations of $E$ on the two sides of the wall, and then further filtrations of the semistable Harder-Narasimhan factors by semi-destabilising subobjects. A similar formula holds if we swap $(b,w_-)$ with $(b,w_+)$.

The coefficients $C_{+,-}$ are given by complicated formulae. The only explicit expression we will need is that when $m =2$, $\nu\_{b,w_0}(\alpha_1)=\nu\_{b,w_0}(\alpha_2),\ \nu\_{b,w_+}(\alpha_1)>\nu\_{b,w_+}(\alpha_2)$ and $\nu\_{b,w_-}(\alpha_1)<\nu\_{b,w_-}(\alpha_2)$ then
\beq{><}
C_{+,-}(\alpha_1,\alpha_2)+C_{+,-}(\alpha_2,\alpha_1)\=(-1)^{\chi(\alpha_1,\alpha_2)-1}\chi(\alpha_1,\alpha_2).
\eeq

For economy of notation we incorporate the sign into the Mukai pairing by setting
$$
\bar\chi(v,w)\ :=\ (-1)^{\chi(v,w)-1}\chi(v,w), \qquad \bar\chi(w(d))\ :=\ (-1)^{\chi(w(d))-1}\chi(w(d)),
$$
for $v,w\in K(X)$. The second is the special case of the first where $v=[\cO_X(-d)]$.

\subsection*{The base case} We are now ready to apply the wall crossing formula to the class $\vno$ \eqref{class vn}. We will induct on the value of $c$, so we begin by fixing any class $\v$ \eqref{vdef} with
$$
\tfrac1{H^3}\ch_1(\v).H^2\=c_{\min}\ \in\ \tfrac1{H^3}\N,
$$
where $c_{\min}$ is the minimal value of $D.H^2/H^3$ over effective divisors $D\ne0$. 

We choose $n_0\gg0$ and always work with $(b,w)\in U$ to the right of $\Pi(\vno)$ --- i.e. with $b>-(c_{\min}+n_0)$ --- to ensure that $\nubw(\vno)<+\infty$. For $(b,w)$ below $\ell_f$ \eqref{lfeq} all objects of class $\vno$ are unstable. For $(b,w)$ on or above $\ell_f\cap U$ we have $(b,w)\in U(\vno)$, where we have the analysis of possible destabilising sequences in Summary \ref{summary}. The rank 0 factor $E_0$ has $\ch_1(E_0).H^2=(c-c')H^3$, where $c'\in[0,c_{\min})$. Thus $c'=0$ and the other factor $E_1$ is $T(-n_0)[1]$ for some $T\in\Pic\_0(X)$.

Thus we are on the Joyce-Song wall $\ell_{\js}(\vno)$, and this is the \emph{only wall} in $U$ for $\vno$. Let $(b,w_\pm)$ be points just above and below $\ell_{\js}(\vno)$. Since $c$ is minimal $E_0$ is \emph{stable} with no semistable factors. So using \eqref{><} the wall crossing formula simplifies to
$$
\J_{b,w_-}(\vno)\=\J_{b,w_+}(\vno)-\bar\chi(v(n_0))\cdot\J_{b,w_+}\big[T(-n)[1]\big]\cdot\J_{b,w_+}(\v).
$$
Here $\J_{b,w_-}(\vno)=0$ and $\J_{b,w_+}\big[T(-n)[1]\big]=\#H^2(X,\Z)_{\mathrm{tors}}$ is the invariant counting line bundles $T\in\Pic\_0(X)$.
There are no further walls for $\vno$, and there are no walls at all in $U(v_n)$ for $\v$ by Lemma \ref{rk0ss}, so the formula simplifies further to give
\beq{JJJ}
\J_{\infty}(\vno)\=\bar\chi(\v(n_0))\cdot\#H^2(X,\Z)_{\mathrm{tors}}\cdot\J_{b,\infty}(\v).
\eeq
As before $\J_{b,\infty}$ denotes $\J_{b,w}$ for $w\gg0$, while from now on $\J_\infty(\alpha)$, for a rank $-1$ class $\alpha$, will always denote $\J_{b,\infty}(\alpha)$ for $b>\mu\_H(\alpha)$ --- i.e. for $b$ to the right of $\Pi(\alpha)$. Rearranging \eqref{JJJ} proves a special case of the following.

\begin{Thm}\label{Ab} For any rank $0$ class $\v\in K(X)$ with $\ch_1(\v).H^2>0$ there is a universal formula $\J_{b,\infty}(\v)=F\(\J_{\infty}(\alpha_1),\J_{\infty}(\alpha_2),\dots\)$ with all $\alpha_i$ of rank $-1$.
\end{Thm}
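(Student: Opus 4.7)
The plan is to induct on $c := \ch_1(\v).H^2/H^3 \in \tfrac{1}{H^3}\N_{>0}$. The base case $c = c_{\min}$ is the calculation already carried out and summarised in \eqref{JJJ}: it exhibits $\J_{b,\infty}(\v)$ as a nonzero rational multiple of the rank $-1$ invariant $\J_{b,\infty}(\vno)$, which is of the required form.

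For the inductive step, assume the theorem holds for every rank $0$ class $\v'$ with $\ch_1(\v').H^2/H^3 < c$, and apply Joyce-Song wall crossing to $\vno$. Below $\ell_f$ no object of class $\vno$ is $\nubw$-semistable, so $\J_{b,w}(\vno) = 0$ there. By Propositions \ref{prop. locally finite set of walls} and \ref{large} there are finitely many walls in $U(\vno)$ before we reach its large volume chamber, exactly one of which is $\ell_{\js}(\vno)$. Telescoping \eqref{WCF} over these walls expresses $\J_{b,\infty}(\vno)$ as a universal sum of products of $\J_{b,w_+}(\alpha_i)$ over all classes $\alpha_1,\ldots,\alpha_m$ that arise as $\nu$-semistable factors of $\vno$ along the way.

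By Summary \ref{summary} the $\alpha_i$ split into a single rank $-1$ complex of strictly smaller $\ch_1.H^2$, together with rank $0$ sheaves whose classes are proportional, of total $\ch_1.H^2 \le cH^3$, with equality only on $\ell_{\js}(\vno)$, in which case the rank $0$ class is exactly $\v$. For rank $0$ factors off $\ell_{\js}(\vno)$ we have $\ch_1.H^2 < cH^3$, and Lemma \ref{rk0ss} says they have no walls in $U(\vno)$, so $\J_{b,w_+}(\alpha) = \J_{b,\infty}(\alpha)$, which the inductive hypothesis rewrites in terms of rank $-1$ invariants. On $\ell_{\js}(\vno)$ itself the rank $0$ factor $\v$ again satisfies $\J_{b,w_+}(\v) = \J_{b,\infty}(\v)$ by Lemma \ref{rk0ss}, contributing the distinguished term with nonzero coefficient $\bar\chi(\v(n_0)) \cdot \#H^2(X,\Z)_{\mathrm{tors}}$, which we ultimately solve for.

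It remains to rewrite each rank $-1$ invariant $\J_{b,w_+}(\alpha)$ appearing above as a large-volume rank $-1$ invariant. This is done by further wall crossings applied recursively: Summary \ref{summary}(5), together with Propositions \ref{c' negative}, \ref{T}, \ref{prop.c' positive} and Lemma \ref{lem.safe area}, guarantees that each subsequent wall has the same two-tiered structure (rank $0$ sheaves of $\ch_1.H^2 < cH^3$ plus a single rank $-1$ factor of strictly smaller $\ch_1.H^2$), whether the rank $-1$ factor remains close to $\vno$ in the sense of Definition \ref{close} or lies in its own safe area. The inner recursion terminates because $\ch_1.H^2$ of rank $-1$ factors is bounded below by $(n_0 - c/3)H^3$ via Lemma \ref{lem.destabilising objects}, while the outer induction handles all rank $0$ factors generated along the way. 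Universality of the resulting polynomial follows from the universality of the Joyce-Song coefficients $C_{+,-}$ together with the fact that the wall locations and classes depend only on $H^*(X,\Q)$, $\ch(\v)$ and $H$. The main obstacle is orchestrating the two interlocking recursions --- on $c$ externally and on $\ch_1.H^2$ of rank $-1$ classes internally --- so that every inner call appeals only to rank $0$ data of strictly smaller $c$; this is precisely what the structure results of Section \ref{shvs}, as condensed in Summary \ref{summary}, are designed to provide.
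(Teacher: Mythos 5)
Your proposal is correct and follows essentially the same route as the paper: an ascending induction on $c=\ch_1(\v).H^2/H^3$ with base case \eqref{JJJ}, combined with an inner recursion on $\ch_1.H^2$ of the rank $-1$ factors (packaged in the paper as Proposition \ref{prop-new}, a nested induction on $\ch_1(\beta).H^2/H^3$ and a descending induction over walls), using Summary \ref{summary} and Lemma \ref{rk0ss} to control the factors and finally solving for the distinguished Joyce--Song term $\bar\chi(\v(n_0))\cdot\#H^2(X,\Z)_{\mathrm{tors}}\cdot\J_{b,\infty}(\v)$. The only points worth tightening are that on $\ell_{\js}(\vno)$ the terms with $m>2$ have all rank $0$ factors of $\ch_1.H^2$ strictly less than $cH^3$ (so only the $m=2$ term produces the class $\v$ itself), and that solving for $\J_{b,\infty}(\v)$ uses $\chi(\v(n_0))\neq0$, which holds since $n_0\gg0$.
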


Here $F$ is a polynomial with countably many terms, but only finitely many of them nonzero. It may change from line to line --- we only care that its coefficients depend on just the cohomology and Chern classes of $X$.

\subsection*{The induction step}
We will prove Theorem \ref{Ab} by an ascending induction on $c=\ch_1(\v).H^2/H^3$. The universal formula \eqref{JJJ} proves the base case $c=c_{\min}$. So we can inductively assume that we have proved Theorem \ref{Ab} for all rank 0 classes with $\ch_1\!.\;H^2<cH^3$ and next prove it for a fixed class $\v$ with $\ch_1(\v).H^2=cH^3$.

So we choose $n_0\gg0$ as in \eqref{nzero}, set $\vno=\v-[\cO_X(-n_0)]$ and take $(b,w)$ to the right of $\Pi(\vno)$. As we wall cross for the class $\vno$ we will produce rank $-1$ classes of smaller $\ch_1\!.\;H^2$, which we can also express in terms of invariants $\J_{\infty}(\beta_i)$ with $\rk(\beta_i)=-1$ by a separate induction, as follows.

\begin{Prop}\label{prop-new}
    	Fix a class $\beta$ of rank $-1$ with $\ch_1(\beta).H^2/H^3 \in \big[n_0-\frac c3,\,n_0+c\big)$ and a point $(b,w) \in U$ to the right of $\Pi(\beta)$, on or above $\ell_f$, not\;\footnote{This condition is not strictly necessary; on changing the universal constants $C_{+,-}$ the wall ``crossing" formula \eqref{WCF} still holds when one of $(b,w_\pm)$ lies \emph{on} the wall. Since we only care about crossing walls to reach the large volume chamber we do not concern ourselves with the values of the invariants $\J_{b,w}$ on walls.} on a wall of instability for $\beta$. Suppose that either 
    	\begin{enumerate}
    	    \item $(b,w)\in U^s_\beta$ is in the safe area of $\beta$, or 
    	    \item $\beta$ is close to $\vno$ and $(b,w) \in U([\beta])$.
    	\end{enumerate}
    	Then there is a universal formula $\J_{b,w}(\beta)=F\(\J_{\infty}(\beta_1),\J_{\infty}(\beta_2),\dots\)$ with $\rk(\beta_i)=-1$. 
\end{Prop}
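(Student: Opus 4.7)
The plan is a strong induction on $\ch_1(\beta).H^2/H^3 \in \frac1{H^3}\Z$, which is bounded below by $n_0-\tfrac13c$ (Lemma \ref{lem.destabilising objects}) and strictly bounded above by $n_0+c$ by hypothesis; so the induction ranges over a finite set. Crucially this upper bound excludes $\beta=\vno$, ruling out the $c'=0$ branch of Proposition \ref{c' negative}.

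\textbf{Inductive step.} By Proposition \ref{large} there are only finitely many walls for $\beta$ between $(b,w)$ and the large volume chamber $U\_\infty$ of $\beta$. If there are none, $\J_{b,w}(\beta)=\J_{b,\infty}(\beta)$ and the required formula $F\=\J_{b,\infty}(\beta)$ is trivial. Otherwise, let $\ell$ be the lowest such wall and pick $(b,w_\pm)$ just below/above $\ell$ as in \eqref{just}; since there are no walls for $\beta$ strictly between $(b,w)$ and $(b,w_-)$ we have $\J_{b,w}(\beta)=\J_{b,w_-}(\beta)$, and \eqref{WCF} gives
\[
\J_{b,w_-}(\beta)\=\J_{b,w_+}(\beta) +\sum C_{+,-}(\alpha_1,\dots,\alpha_m)\prod_i \J_{b,w_+}(\alpha_i),
\]
with finitely many nonzero terms. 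By iteratively applying Summary \ref{summary} combined with Lemma \ref{lem.safe area}, each factor $\alpha_i$ is either a rank~$0$ sheaf class with $\ch_1(\alpha_i).H^2\in(0,cH^3)$, or a rank~$-1$ class $[E_1]$ with $\ch_1(E_1).H^2<\ch_1(\beta).H^2$ which still satisfies hypothesis (1) or (2) at $(b,w_+)$.

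\textbf{Handling the factors.} For the rank $-1$ factors $\alpha_i$, the inductive hypothesis of Proposition \ref{prop-new} (applicable because $\ch_1(\alpha_i).H^2$ is strictly smaller) expresses $\J_{b,w_+}(\alpha_i)$ as a universal polynomial in invariants $\J_{b,\infty}(\beta_j)$ with $\rk(\beta_j)=-1$. For the rank~$0$ factors, the bound $\ch_1(\alpha_i).H^2\in(0,cH^3)$ combined with the argument of Lemma \ref{rk0ss} (which gave an $O(c)$ bound on $\ch_1\!.\;H^2$, applied here to $\alpha_i$) shows there are no further walls for $\alpha_i$ above $\ell\cap U$, hence $\J_{b,w_+}(\alpha_i)=\J_{b,\infty}(\alpha_i)$; the inductive hypothesis of Theorem \ref{Ab}, which is available for all rank~$0$ classes of $\ch_1\!.\;H^2<cH^3$, then expresses this in the required form. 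The remaining term $\J_{b,w_+}(\beta)$ is handled by re-applying Proposition \ref{prop-new} at $(b,w_+)$, since both $U^s_\beta$ and $U([\beta])$ are stable under raising $w$; since there is one fewer wall to cross, this recursion on walls terminates at $\J_{b,\infty}(\beta)$.

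\textbf{Main obstacle.} The hardest bookkeeping step is verifying that hypothesis (1) or (2) \emph{propagates} at each wall crossing to every rank $-1$ semistable factor $[E_1]$ and every subsequent rank $-1$ sub-factor produced by its own wall crossing. In case (2), Proposition \ref{prop.c' positive} (with $c'>0$, which is forced by the exclusion of $\beta=\vno$) dichotomises into exactly the two cases of Definition \ref{defsafe} and Definition \ref{close}; in case (1), Lemma \ref{lem.safe area} keeps us in the safe area of $[E_1]$. The remaining work is mechanical: check that (a) $(b,w_+)$ still lies in the relevant safe area or $U([E_1])$ after raising $w$; (b) the rank~$0$ side condition $\ch_1\!.\;H^2 \in (0,cH^3)$ holds so Theorem \ref{Ab} can be invoked; (c) the rank~$-1$ side has strictly smaller $\ch_1\!.\;H^2$ so the outer induction is legitimate. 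All three are exactly what Summary \ref{summary} records.
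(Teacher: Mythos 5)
Your proposal is correct and follows essentially the same route as the paper: a double induction (ascending on $\ch_1(\beta).H^2/H^3$, and on the finitely many walls separating $(b,w)$ from the large volume chamber), with the wall crossing formula \eqref{WCF} at each wall, the single rank $-1$ factor handled by the outer induction via Proposition \ref{prop.c' positive}/Lemma \ref{lem.safe area}, and the rank $0$ factors frozen by Lemma \ref{rk0ss} and fed into the induction on $c$ from Theorem \ref{Ab}. Your "recursion on the number of remaining walls, terminating at $\J_{b,\infty}(\beta)$" is just the paper's descending induction on walls with the large volume chamber as base case, phrased in the opposite direction.
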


\begin{proof}
We prove this by an ascending induction on $\ch_1(\beta).H^2/H^3$. If it takes the minimal value $\ge n_0-\frac c3$ attained by integral classes then there is no wall for $\beta$ on or above $\ell_f$ by Lemma \ref{lem.destabilising objects}, so the claim is true and the induction starts.

The claim is trivial when $(b,w)$ lies in the large volume chamber for $\beta$ of Proposition \ref{large}. We take this as the base case of a descending induction on the walls $\ell$ for $\beta$ above $\ell_f$. Since there are only finitely many of them, we may assume the claim holds at a point $(b,w_+)$ just above $\ell$, in the sense of \eqref{just}. Then we need only deduce from this, and the wall crossing formula \eqref{WCF}, that it also holds at a point $(b,w_-)$ just below $\ell$.

We analyse the formula \eqref{WCF} for $v=\beta$. For each term of the sum, 
Proposition \ref{prop.c' positive} and Lemma \ref{lem.safe area} show that precisely one $\alpha_i$ has rank $-1$; it then satisfies either (a) or (b) above with $\ch_1(\alpha_i).H^2 < \ch_1(\beta).H^2$, so by our induction on $\ch_1(\beta).H^2/H^3$ the invariant $\J_{b,w_+}(\alpha_i)$ can be written as universal expressions in invariants $\J_{\infty}$ of rank $-1$ classes.

The other terms $\alpha_j$ in \eqref{WCF} are all of rank zero with $\ch_1(\alpha_j).H^2 < cH^3$. They satisfy $\J_{b,w_+}(\alpha_i)=\J_{b,\infty}(\alpha_i)$ by Lemma \ref{rk0ss}. Thus our induction on $c$ writes these as universal expressions in invariants $\J_{\infty}$ of rank $-1$ classes too.
\end{proof}

Starting with $(b,w)$ just below $\ell_f$ --- the lower boundary of $U(\vno)$ --- all objects of class $\vno$ are strictly unstable and we have the universal formula
\beq{univ}
\J_{b,w}\(\vno\)\=0.
\eeq
Now we move upwards towards the large volume chamber $w\gg0$ for $\vno$, crossing finitely many walls $\ell\cap U$ for $\vno$ as we go and modifying the formula accordingly.

Let $(b,w_-),\,(b,w_0),\,(b,w_+)$ denote points just below, on and above $\ell$ in the sense of \eqref{just}. By induction on the walls we may assume that we have a universal formula for $\J_{b,w_-}(\vno)$ in terms of invariants $\J_{\infty}$ of rank $-1$ classes, plus the extra term 
\beq{JJJ2}
\bar\chi(\v(n_0))\cdot\#H^2(X,\Z)_{\mathrm{tors}}\cdot\J_{b,\infty}(\v)
\eeq
from \eqref{JJJ} \emph{if and only if $(b,w_-)$ is above $\ell_{\js}(\vno)$}. The induction step is to use the wall crossing formula across $\ell$ --- with $(b, w_+)$ and $(b, w_-)$ swapped in \eqref{WCF} --- to deduce a similar formula for $\J_{b,w_+}(\vno)$.

Since $\ell$ is on or above $\ell_f$ we have $(b,w_0)\in U(\vno)$, so Summary \ref{summary} applies. We conclude that in each term of the sum on the right hand side of \eqref{WCF} precisely one $\alpha_i$ is of rank $-1$ and satisfies either 
\begin{enumerate}
    \item [(i)] condition (a) or (b) of Proposition \ref{prop-new}, or 
    \item [(ii)] $[\alpha_i] = \big[\cO_X(-n)[1]\big]$.
\end{enumerate}
The other $\alpha_j$ are all of rank zero with $\ch_1(\alpha_j).H^2 \le cH^3$. Equality can only hold in case (ii); then $\ell=\ell_{\js}$, the corresponding term in \eqref{WCF} has $m=2$ and the terms $\alpha_1,\alpha_2$ are $\cO_X(-n)$ and $\v$ in some order, giving the contribution
$$
\bar\chi(\v(n_0))\cdot\#H^2(X,\Z)_{\mathrm{tors}}\cdot\J_{b,\infty}(\v)
$$
of \eqref{JJJ2}. Thus every other term $\J_{b,w_-}$ in the sum can be written in terms of invariants $\J_{\infty}$ of rank $-1$ classes by Proposition \ref{prop-new} and the induction assumption. Therefore our universal formula for $\J_{b,w_-}(\vno)$ induces one for $\J_{b,w_+}(\vno)$, with the extra term \eqref{JJJ2} if $(b,w_0)$ is on $\ell_{\js}$. This completes the induction on the walls, allowing us to move to the large volume chamber for $\vno$, where the universal formula now takes the form
$$
\J_{\infty}(\vno)\=F\(\J_{\infty}(\alpha_1),\J_{\infty}(\alpha_1),\dots\)+\bar\chi(\v(n_0))\cdot\#H^2(X,\Z)_{\mathrm{tors}}\cdot\J_{b,\infty}(\v),
$$
for some rank $-1$ classes $\alpha_i$. Rearranging proves Theorem \ref{Ab} for our class $\v$ and therefore, by induction, all $\v$.

\subsection*{Proof of Theorem \ref{1}}
As explained in the Introduction, to prove Theorem \ref{1} it is sufficient to prove Theorem \ref{2} for rank 0 classes $\v\in K(X)$ with $\ch_1(\v).H^2>0$. For these we now have Theorem \ref{Ab}.

By results of Toda \cite{TodaBG} explained in Appendix \ref{appendix}, the invariants $\J_{\infty}(\alpha_i)$ in Theorem \ref{Ab} are stable pair invariants up to multiplying by the factor $\#H^2(X,\Z)_{\mathrm{tors}}$. In turn we can replace these by DT invariants $\J(\beta_j)$ counting ideal sheaves\footnote{These ideal sheaves are all Gieseker stable, and --- after tensoring by elements of $\Pic\_0(X)$ --- account for all Gieseker semistable sheaves in the same class.} in rank 1 classes $\beta_j$, by the wall crossing formula \cite[Theorem 1.1]{BrDTPT}. This proves a version of Theorem \ref{1} for $\v$, but with $\J(\v)$ replaced by $\J_{b,\infty}(\v)$ on the left hand side,
\beq{nearly}
\J_{b,\infty}(\v)\=F\(\J(\beta_1),\J(\beta_2),\dots\).
\eeq

Finally we wall cross from large volume (or ``tilt") stability to Gieseker stability for $\mathsf v$. This was already carried out in \cite[Sections 4 and 5]{FT3}; we briefly review the details.

For any $\alpha\in K(X)$ we denote its Hilbert polynomial by
$$
P_\alpha(t)\ :=\ \chi(\alpha(t))\=a_dt^d+a_{d-1}t^{d-1}+\dots+a_0,
$$
for $d\le3$ such that $a_d\ne0$.
From this we define the reduced Hilbert polynomial $p_\alpha(t)$ and its truncation $\widetilde p_\alpha(t)$ by
$$
p_\alpha(t)\ :=\ \tfrac1{a_d}P(t) \quad\text{and}\quad \widetilde p_\alpha(t)\ :=\ p_\alpha(t)-\tfrac{a_0}{a_d}\=t^d+\dots+\tfrac{a_1}{a_d}t.
$$
There is a total order $\prec$ on $\{$monic polynomials$\}\sqcup\{0\}$ defined by $p\prec q$ if and only if
\begin{itemize}
\item[(i)] deg$\,p>\ $deg$\,q$, or
\item[(ii)] deg$\,p=\,\;$deg$\,q$ and $p(t)<q(t)$ for $t\gg0$.
\end{itemize}
(The strange direction of the inequality (i) ensures that sheaves of lower dimension have \emph{greater} slope. Our convention is that $\deg0=0$, so $0$-dimensional classes $\alpha$ have $\widetilde p_\alpha=0\succ\widetilde p_\beta$ whenever $\beta$ is not of dimension 0.)
Then $E\in\mathrm{Coh}\;(X)$ is called \emph{Gieseker} (respectively \emph{tilt}) (semi)stable if for all non-trivial exact sequences $0\to A\to E\to B\to0$ in Coh$\;(X)$ we have
\beq{tidef}
p\_{\;[A]}\ \,(\preceq)\,\ p\_{\;[B]} \quad\(\text{respectively }\ \widetilde p\_{\;[A]}\ \,(\preceq)\,\ \widetilde p\_{\;[B]}\).
\eeq
Here $(\preceq)$ means $\prec$ for stability and $\preceq$ for semistability. In particular (i) ensures that Gieseker and tilt semistable sheaves are pure. 

Fix a class $\alpha\in K(X)$ with $\rk(\alpha)>0$, or $\rk(\alpha)=0$ and $\ch_1(\alpha).H^2>0$. Then an argument from \cite[Proposition 14.2]{Br} shows that an object $E\in\Ab$ of class $\alpha$ is $\nubw$-(semi)stable in the large volume chamber (i.e. for $w\gg0$) if and only if $E\in\mathrm{Coh}\;(X)$ is a tilt (semi)stable sheaf. So the invariants counting tilt semistable sheaves are the $\J_{b,\infty}(\alpha)$. Joyce-Song define an invariant $\J(\alpha)\in\Q$ \cite[Definition 5.15]{JS} counting Gieseker semistable sheaves of class $\alpha$; when there are no strictly semistables this reproduces the integer invariant defined in \cite{Th}.

So we can work in Coh\;$(X)$ to pass from the invariants $\J_{b,\infty}(\alpha)$ counting tilt semistable sheaves to the invariants $\J(\alpha)$ counting Gieseker semistable sheaves. Since tilt stability dominates Gieseker stablity in the sense of \cite[Definition 3.12]{JS} we get a wall crossing formula from \cite[Theorems 3.14 and 5.14]{JS}. It takes the form
\beq{Gti}
\J(\alpha)\=\J_{b,\infty}(\alpha)+\mathop{\sum_{m\ge2,\ \alpha_1,\dots,\;\alpha_m\,\in\,C(\cA),}}_{\sum_{i=1}^m\alpha_i\,=\,\alpha,\ \widetilde p_{\alpha_i}(t)\,=\,\widetilde p_{\alpha}(t)\ \forall i}C(\alpha_1,\dots,\alpha_m)\prod_{i=1}^m\J_{b,\infty}(\alpha_i);
\eeq
see \cite[Equation 78]{FT3}. The universal coefficients $C(\alpha_1,\dots,\alpha_m)$ depend only on the Mukai pairings $\chi(\alpha_i,\alpha_j)$ and the $\prec$-ordering of the reduced Hilbert polynomials $\{p\_{\alpha_i}(t)\}_{i=1}^m$.

The definition \eqref{tidef} of $\widetilde p(t)$ ensures that if $\rk(\alpha)=0,\,\ch_1(\alpha).H^2>0$ then the same is true of the $\alpha_i$ that appear in \eqref{Gti}.
Therefore we can substitute $\alpha=\mathsf v$ into \eqref{Gti} and replace all the terms $\J_{b,\infty}(\alpha),\,\J_{b,\infty}(\alpha_i)$ on the right hand side by their expressions \eqref{nearly} in terms of rank 1 DT invariants $\J(\beta_i)$. This proves Theorem \ref{1}.

%

\section{Dimension one sheaves}\label{dim1}
We again take $K_X=\cO_X$ and $H^1(\cO_X)=0$. In this Section we prove that our technique of using Joyce-Song stable pairs and wall crossing works for dimension 1 classes as well, proving Theorem \ref{2} in this case.

This is just for completeness, as Toda gives a much better, completely explicit formula for the DT invariants $\J(0,0,\beta,m)$ in terms of stable pair invariants in  \cite[Lemma 3.15]{To4}.\medskip

Fix $n \in \mathbb{Z}$ and work in the full subcategory 
 \begin{equation*}
 \cA\ :=\ \Big\langle \cO_X(-n),\ \Coh_{\leq 1}(X)[-1] \Big\rangle_{\text{ext}} \ \subset\ \cD(X),
 \end{equation*}
which is proved in \cite[Lemma 3.5]{TodaDTPT} to be an abelian category. Letting $K(\cA)$ denote its numerical Grothendieck group, define the map 
 \begin{align*}
 \cl \colon K(\cA)&\ \To\ \mathbb{Z}^{\oplus 3},\\
\big[\cO_X(-n)^{\oplus r}\big]+\big[F[-1]\big]&\ \Mapsto\ \(r,\,\ch_2(F).H,\,\ch_3(F)\)\=(r,c,s),
 \end{align*}
for $F\in\Coh_{\le1}(X)$. Equivalently, for $E\in\cA\subset\cD(X)$, 
$$
\cl(E)\=\Big(\!\ch_0(E),\ -\ch_2(E).H + \tfrac12n^2H^3\ch_0(E),\ -\ch_3(E) - \tfrac16n^3H^3\ch_0(E)\Big). 
$$ 
Toda \cite{TodaDTPT} constructs a one dimensional family of weak stability conditions $\nu\_\theta$ on $\cA$, parameterised by $\theta \in \mathbb{R}$. The slope function $\nu\_\theta\colon K(\cA)\to(-\infty,+\infty]$ factors through $\Z^3$ as 
\begin{equation*}
\nu\_{\theta}(F)\=\nu\_{\theta}(r, c, s)\ :=\ \left\{\!\!\!\begin{array}{cl}  \theta & \text{if } r \neq 0,\\ \frac{s}{c} & \text{if } r =0,\ c \neq 0,  \\
+\infty & \text{if } r =c= 0. 
\end{array}\right.
\end{equation*}   	
As usual we say $E \in \cA$ is $\nu\_\theta$-(semi)stable if and only if for any nontrivial short exact sequence $A\into E\onto B$ in $\cA$, we have 
\begin{equation*}
\nu\_\theta(A) \ (\leq) \ \nu\_\theta(B). 
\end{equation*}
Here $(\le)$ means $\le$ for semistability and $<$ for stability. By \cite[Section 5.3]{To2}, $\big\{\nu\_{\theta}\big\}_{\theta \in \mathbb{R}}$ is a continuous family of weak stability conditions satisfying the support property.

The $\cA$-subobjects of $F[-1]\in\Coh_{\le1}(X)$ are the subsheaves of $F$ (shifted by $[-1]$), and the $\nu\_\theta$-slope $s/c$ of a dimension 1 sheaf is, up to scale, the constant term of its reduced Hilbert polynomial. Therefore, for $F\in\Coh_{\le1}(X)\subset\cA$, the following are equivalent,
\begin{itemize}
\item the $\nu\_\theta$-(semi)stability of $F[-1]$ for one fixed $\theta$,
\item the $\nu\_\theta$-(semi)stability of $F[-1]$ for all $\theta\in\R$,
\item the Gieseker (semi)stability of $F$.
\end{itemize}
We now fix $c\in\Z_{>0},\ s\in\Z$ and consider rank 1 objects $E\in\cA$ with class
$$
\cl(E)\=(1,c,s).
$$
Since we have a one parameter family of stability conditions $\{\nu\_\theta\}\_{\theta\in\R}$, the Joyce-Song wall for the class $(1,c,s)$ is now a single point --- the stability condition $\nu\_{\js}=\nu\_{\theta_{\js}}$ for which $[E]$ and $[E]-[\cO_X(-n)]$ have the same slope. It is
$$
\theta\_{\js}\=\frac sc \quad\text{so that}\quad \nu\_{\js}(1,c,s)\=\nu\_{\js}(0,c,s)\=\nu\_{\js}(1,0,0)\=\theta\_{\js}.
$$

\begin{Lem}\label{propp} There are only finitely many walls $\theta\_{\js}=\theta_0<\theta_1<\dots<\theta_k<+\infty$ to the right of $\theta\_{\js}$ on which objects $E$ of class $(1,c,s)$ can be strictly $\nu\_\theta$-semistable. For $\theta$ in any chamber $(\theta_i,\theta_{i+1})$ the semistable objects do not change and are all stable. 
\end{Lem}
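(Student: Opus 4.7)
The plan is to combine the local finiteness of walls (from Toda's support property) with the existence of a large-$\theta$ chamber, after first identifying what a destabilizing sequence must look like.

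First, I would analyse walls. A strictly $\nu\_{\theta^*}$-semistable $E$ of class $(1,c,s)$ admits a short exact sequence $0\to A\to E\to B\to 0$ in $\cA$ with $\nu\_{\theta^*}(A)=\nu\_{\theta^*}(B)=\theta^*$. Since the rank function on $\cA$ is additive and nonnegative on objects, and $\rk(E)=1$, exactly one of $A,B$ has rank $1$ and the other rank $0$; denote the class of the rank-$0$ factor by $(0,c',s')$. For a finite wall one must have $c'>0$ (else the slope is $+\infty$), and then $\theta^*=s'/c'$. Since both $(0,c',s')$ and $(0,c-c',s-s')$ lie in the positive cone of $\cA$, we get $c'\in\{1,2,\dots,c\}$, finitely many possibilities.

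For the second assertion (semistable equals stable inside a chamber): in an open chamber $(\theta_i,\theta_{i+1})$ no object of class $(1,c,s)$ is strictly $\nu\_\theta$-semistable, by the very definition of a chamber. If some semistable $E$ were not stable, the decomposition above would force $\theta=s'/c'$, a fixed rational value, contradicting the openness of the chamber. Hence semistability coincides with stability throughout, and the set of semistable objects is locally constant along the chamber by the standard wall/chamber argument.

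For the finiteness of walls to the right of $\theta\_{\js}$: Toda's support property for $\{\nu\_\theta\}\_{\theta\in\R}$ \cite[Section 5.3]{To2} yields local finiteness of walls for any fixed class, so it suffices to exhibit a ``large-$\theta$'' chamber. The expected main obstacle is precisely the boundedness step: one needs a uniform upper bound on $s'$, independent of the putative wall, in order to cap $\theta^*=s'/c'$ from above. I would obtain it by invoking Toda's identification \cite{TodaDTPT} of $\nu\_\theta$-semistable objects of class $(1,c,s)$ for $\theta\gg 0$ with PT stable pair complexes $[\cO_X(-n)\to F]$, $F$ pure one-dimensional. Since the moduli of such pairs is bounded (finite type over $\C$), only finitely many classes $(0,c',s')$ can appear as rank-$0$ subquotients, which bounds $\theta^*$ above and thus rules out walls beyond a finite $\theta$. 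Together with local finiteness this gives the desired finite set $\theta\_{\js}=\theta_0<\theta_1<\dots<\theta_k$.
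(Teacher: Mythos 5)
Your wall analysis and your chamber argument are fine and agree with the paper's, and you have correctly located the crux: an upper bound on $s'$ (equivalently, on the $\ch_3$-component $s_0$ of the rank-$0$ destabilising factor), without which the walls $\theta^*=s_0/c_0$ could accumulate at $+\infty$. But your proposed resolution of that step has a genuine gap. First, the identification of $\nu\_\theta$-semistable objects of class $(1,c,s)$ for $\theta\gg0$ with stable pairs is, in this paper, Theorem \ref{thmm}, and its proof \emph{begins} by taking ``$\theta>\theta_k$ beyond the final wall'' --- i.e.\ it uses Lemma \ref{propp}. Citing Toda \cite{TodaDTPT} instead avoids formal circularity only by outsourcing the problem: Toda's identification is proved for a limiting notion of stability and its proof runs through exactly the kind of boundedness estimate you are trying to avoid, so one would still have to check it applies to the family $\nu\_\theta$ in the form ``for \emph{all} $\theta>\Theta$ the semistable objects are precisely the stable pairs''. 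Second, even granting that identification, your stated mechanism does not close the argument: a wall at large $\theta^*$ can be created by an object that is semistable only \emph{at and below} $\theta^*$ (a new object entering the semistable locus), and such an object need not be a stable pair, so bounding the rank-$0$ subquotients of the bounded family of stable pairs does not control all walls. What you would actually need from the identification is simply that the semistable set is $\theta$-independent beyond some $\Theta$; the subquotient-boundedness step is both unnecessary and, as a bound on $\theta^*$, not airtight.

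For comparison, the paper's proof is self-contained and elementary at this point: it puts the rank-$1$ factor $E_1$ in the normal form \eqref{normform}, an $\cA$-extension of $F[-1]$ by a complex $\big[\cO_X(-n)\onto F'\big]$ with $F'$ a \emph{quotient sheaf of} $\cO_X(-n)$ and bounded $\ch_2(F').H$ \eqref{cGbound}. Semistability of $E_1$ bounds the slope of $F$ below, giving $s'\ge\max\big(0,\tfrac sc\big)$, and Grothendieck boundedness (\cite[Lemma 1.7.9]{HL}: quotients of a fixed sheaf with bounded $\ch_2.H$ have $\ch_3$ bounded below) bounds $\ch_3(F')$ below; together these give the upper bound \eqref{four} on $s_0$ and hence on $\theta^*=s_0/c_0$. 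That normal-form-plus-boundedness argument is the missing ingredient in your proposal.
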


\begin{proof}
Note that the classes $(r,c,s)\in\Z^3$ of objects of $\cA$ have $r,c\ge0$, and if $c=0$ then $s\ge0$, because the same is true in $\Coh_{\le1}(X)$. Note also that if an object $E$ of class $(1,c,s)$ is strictly $\nu\_\theta$-semistable then it has a destabilising $\cA$-short exact sequence $E'\into E\onto E''$ with the ranks of $E',\,E''$ being 0 and 1 in some order. We call them $E_0,\,E_1$, where $\rk E_i=i$, and set
$$
\cl(E_0)\=(0,\,c_0,\,s_0) \quad\text{so that}\quad \cl(E_1)\=(1,\,c-c_0,\,s-s_0).
$$
Since $\nu\_\theta(E_0)=\nu\_\theta(E)=\theta<+\infty$ we have $c_0>0$.

Since $\theta>\theta\_{\js}$ is to the right of the Joyce-Song wall,
\begin{equation}\label{one}
 \nu\_\theta(E_0) \= \frac{s_0}{c_0}\= \theta\ >\ \frac{s}{c}\,. 
 \end{equation}
But $ c-c_0 \ge 0$, and if  $c-c_0 = 0$ then $s-s_0\ge0$. The latter contradicts  \eqref{one}, so
 \begin{equation}\label{two}
 0\ <\ c_0\ <\ c.
 \end{equation}
Thus $c_0$ can take only finitely many values, while $s_0\in\Z$ is bounded below by \eqref{one}. If we can find an upper bound for $s_0$ this will prove the finiteness of the possible destabilising classes $\cl(E_0)=(0, c_0, s_0)\in\Z^3$ and walls $\theta=\frac{s_0}{c_0}$ to the right of $\theta\_{\js}$.\medskip

Since $E_1\in\cA$ it is an iterated extension of objects in $\langle\cO_X(-n)\rangle$ and $\Coh_{\le1}(X)[-1]$. Since it has rank 1 we use precisely one term $\cO_X(-n)$ from the former, so collecting the other terms presents $E_1$ as a (non-unique) $\cA$-extension 
$$
E_1'\Into E_1\Onto F[-1] \quad\text{where}\quad F'[-1]\Into E_1'\Onto \cO_X(-n)
$$
for some $F,F'\in\Coh_{\le1}(X)$. That is, $E_1$ is an $\cA$-extension
\beq{normform}
\big[\cO_X(-n)\rt sF'\big]\Into E_1\Onto F[-1].
\eeq
Taking the long exact sequence of cohomology sheaves $\cH^i$ shows that $\cH^0(E_1)$ is the kernel of $\cO_X(-n)\to F'$ and $\cH^1(E_1)$ is an extension of $F$ and the cokernel of $\cO_X(-n)\to F'$. Thus $\cH^0(E_1)$ and $\cH^1(E_1)[-1]$ lie in $\cA$ and the following is an $\cA$-short exact sequence,
$$
\cH^0(E_1)\Into E_1\Onto\cH^1(E_1)[-1].
$$
In other words \emph{we may assume $s$ is onto in \eqref{normform}.} Then writing $\cl(F)=(0,c',s')$, we have
\beq{cGbound}
0\ \le\ c'\=c-c_0-\ch_2(F').H\ \le\ c-c_0.
\eeq
If $c'>0$ then the $\nu\_\theta$-semistability of $E_1$ gives   
 $$
 \frac{s'}{c'}\ \geq\ \nu\_\theta(E_1) \= \theta\ >\ \frac{s}{c}\,,
 $$
 while if $c' = 0$ then $s'\ge0$. So in either case we find $s' \geq \max\(0,\tfrac sc\)$ and therefore
\begin{equation}\label{four}
s-s_0 \= s' +\ch_3(F')\ \ge\ \max\(0,\tfrac sc\)+\ch_3(F').
\end{equation}
Quotients $F'$ of a fixed sheaf $\cO_X(-n)$ with bounded $\ch_2(F').H$ \eqref{cGbound} have $\ch_3(F')$ bounded below; see \cite[Lemma 1.7.9]{HL} for instance. Thus \eqref{four} bounds $s_0$ above, as required. 
\end{proof}

We note for later that these walls $\theta_i$ are also to the right of the Joyce-Song wall $\theta\_{\js}(E_1)=\frac{s-s_0}{c-c_0}$ of the rank 1 semistable factor $E_1$. This follows from \eqref{one} and the see-saw inequality,
\beq{seesaw}
\frac{s_0}{c_0}\=\theta\ >\ \frac sc\ >\ \frac{s-s_0}{c-c_0}\,.
\eeq

We now pick $n\gg0$ so that $H^1(F(n))=0$ for any Gieseker semistable sheaf
\beq{n>>0}
F\,\in\,\Coh_{\le1}(X)\ \text{ with }\ 0\,\le\,\ch_2(F).H\,\leq\,c \ \text{ and }\  \frac{\ch_3(F)}{\ch_2(F).H}\,\ge\,\theta\_{\js} -1.
\eeq
\begin{Thm}\label{thmm} Consider $\nu\_\theta$-semistable objects $E$ of class $(1,c,s)$.
\begin{itemize}
\item For $\theta\in(\theta\_{\js}-1,\theta\_{\js})$ there are none,
\item For $\theta\in[\theta\_{\js},\infty)$ they are complexes $E=[\cO_X(-n)\to F]$ for some $F\in\Coh_{\le1}(X)$, 
\item For $\theta\gg0$ the complex $E(n)$ is a stable pair, and all stable pairs arise in this way.
\end{itemize}
That is, $F$ is pure and $\dim\cH^1(E)=0$ when $\theta\gg0$. 
\end{Thm}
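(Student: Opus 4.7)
The plan is to bring any $\nu\_\theta$-semistable $E$ of class $(1,c,s)$ with $\theta\ge\theta\_{\js}-1$ into the ``case A'' form
\[
G[-1]\Into E\Onto\cO_X(-n),\qquad G\in\Coh_{\le1}(X),\ \cl(G)=(0,c,s),
\]
so that $E\simeq[\cO_X(-n)\rt{s}G]$, and then to read off parts (1)--(3) from this single $\cA$-exact sequence.

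To produce the case A form I would start from the normal form \eqref{normform}, $[\cO_X(-n)\to F']\Into E\Onto F[-1]$ with the defining map surjective and $F,F'\in\Coh_{\le 1}$. Semistability of $E$ forces every pure 1-dim quotient of $F[-1]$ to have $\nu\_\theta$-slope $\ge\theta$, and $\ch_2(F).H\le c$ is automatic, so $F$ satisfies the bounds of \eqref{n>>0} as soon as $\theta\ge\theta\_{\js}-1$. I now lift the tautological $\cA$-quotient $[\cO_X(-n)\to F']\Onto\cO_X(-n)$ through $E$; the obstruction lies in
\[
\Ext^1_\cA(F[-1],\cO_X(-n))\=\Ext^2(F,\cO_X(-n))\ \cong\ H^1(F(n))^*,
\]
using Serre duality on the Calabi-Yau $X$, and vanishes by our choice of $n$ in \eqref{n>>0} (the 0-dim and pure-1-dim parts of $F$ both contribute zero). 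The kernel of the resulting $\cA$-surjection $E\Onto\cO_X(-n)$ is an iterated extension of $F[-1]$ and $F'[-1]$, hence of the shape $G[-1]$ with $G\in\Coh_{\le1}$ and $\cl(G)=(0,c,s)$. Converting the $\cA$-sequence to a triangle identifies $E$ with the 2-term complex $[\cO_X(-n)\to G]$.

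Granted this form, the three assertions follow quickly. For part (1) the sub $G[-1]$ has $\nu\_\theta$-slope $s/c=\theta\_{\js}$ while the quotient $\cO_X(-n)$ has slope $\theta$, so $\theta<\theta\_{\js}$ strictly destabilises $E$ and there are no semistables on $(\theta\_{\js}-1,\theta\_{\js})$. Part (2) is the shape $E=[\cO_X(-n)\to G]$ itself. For part (3), any 0-dim subsheaf $G_0\subset G$ gives an $\cA$-sub $G_0[-1]\Into E$ of slope $+\infty$, destabilising $E$ for any finite $\theta$, so $G$ is pure 1-dim. If $\cok(s)$ had a 1-dim (pure) part $C$, letting $K\subset G$ be the kernel of $G\Onto\cok(s)\Onto C$ (so $\im(s)\subset K$) gives the $\cA$-sub $[\cO_X(-n)\to K]\Into E$ with quotient $C[-1]$ of finite slope $\ch_3(C)/\ch_2(C).H$, destabilising $E$ once $\theta$ exceeds that slope. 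Since Lemma \ref{propp} gives only finitely many walls above $\theta\_{\js}$, semistability for $\theta\gg0$ forces $\cok(s)$ to be $0$-dim. Conversely, given a stable pair $(G,s)$, the vanishing $\Hom(G_0[-1],\cO_X(-n))=\Ext^1(G_0,\cO_X(-n))=H^2(G_0(n))^*=0$ for any $G_0\in\Coh_{\le1}$ shows that every rank-$0$ $\cA$-sub of $E=[\cO_X(-n)\to G]$ factors through $G[-1]$, i.e.\ comes from a subsheaf of $G$; these are all 1-dim of bounded finite slope, so $E$ is $\nu\_\theta$-semistable for $\theta\gg0$.

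The main obstacle is the first step: extracting precisely the quotient-slope bound from semistability that plugs into \eqref{n>>0} and kills the Ext obstruction $H^1(F(n))^*$. Once the case A form is secured, the remainder is a formal manipulation of the $\cA$-filtration, and parts (2) and (3) essentially fall out of reading the slopes in the resulting short exact sequence together with Lemma \ref{propp}.
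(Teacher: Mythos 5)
Your construction of the two-term form is essentially the paper's own argument: start from the normal form \eqref{normform}, use semistability of $E$ to bound the slopes of the Harder--Narasimhan factors of the quotient sheaf from below by $\theta>\theta\_{\js}-1$, invoke \eqref{n>>0} and Serre duality to kill $\Ext^2(F,\cO_X(-n))\cong H^1(F(n))^*$, and lift the projection to $\cO_X(-n)$ across $E$. (Be slightly careful with the wording: \eqref{n>>0} applies to \emph{Gieseker semistable} sheaves, so the vanishing must be assembled from the HN factors of the quotient sheaf, as your parenthetical hints; this is exactly what the paper does.) The first two bullets and the forward half of the third then follow by the same slope comparisons as in the paper.

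There is, however, a genuine gap in your converse (``all stable pairs arise in this way''). To show $E=[\cO_X(-n)\to G]$ is $\nu\_\theta$-semistable for $\theta\gg0$ you must check \emph{every} $\cA$-exact sequence $A\Into E\Onto B$, and you only treat the case $\rk(A)=0$. When $\rk(A)=1$ and $\rk(B)=0$ the required inequality is $\theta=\nu\_\theta(A)\le\nu\_\theta(B)$, and since $\theta\gg0$ this fails unless $\nu\_\theta(B)=+\infty$, i.e.\ unless $\cH^1(B)$ is $0$-dimensional. This is not automatic from your factorisation argument (which says nothing about quotients); it needs the observation that $B=\cH^1(B)[-1]\in\Coh_{\le1}(X)[-1]$ and that the long exact sequence of cohomology sheaves exhibits $\cH^1(B)$ as a quotient of $\cH^1(E)=\cok(s)$, which is $0$-dimensional precisely because $(G,s)$ is a stable pair. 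The paper's proof handles this case explicitly and your argument is incomplete without it; with that one line added, your treatment of the rank-$0$ subobjects (factoring them through $G[-1]$ via $\Ext^1(G_0,\cO_X(-n))=H^2(G_0(n))^*=0$ and using purity of $G$ to bound their slopes) is a perfectly good, mildly different route to the remaining half of the converse.
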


\begin{proof}
 	Take $\theta>\theta\_{\js}-1$ and write any $\nu\_\theta$-semistable object $E$ of class $(1,c,s)$ in the form
\beq{norm3}
 	\big[\cO_X(-n)\rt{s_0}F'\big]\Into E\Onto F''[-1]
\eeq
as in \eqref{normform}, with $F',F''\in\Coh_{\le1}(X)$.
 Let $F'' \onto F^{\min}$ be the the final Gieseker semistable quotient of its Harder-Narasimhan filtration --- i.e. the quotient sheaf of $F''$ of minimum $\ch_3\!/\!\ch_2\!.\;H$ slope. Thus $F^{\min}[-1]$ is the $\cA$-quotient of $F''[-1]$ of minimal $\nu\_\theta$-slope,
and is $\nu\_\theta$-semistable for any $\theta \in \mathbb{R}$.
 
The semistability of $E$ and its $\cA$-quotient $E\onto F^{\min}[-1]$ give the slope inequality
 \begin{equation*}
 \theta\_{\js}-1\ <\ \theta\ \le\ \frac{\ch_3(F^{\min})}{\ch_2(F^{\min}).H}\ \le\ \frac{\ch_3(F^i)}{\ch_2(F^i).H}\,,
 \end{equation*}
 for all Harder-Narasimhan factors $F^i$ of $F''$. Therefore \eqref{n>>0} gives $H^1(F^i(n))=0$ for all $i$, so $H^1(F''(n))=0=\Ext^2(F'', \cO_X(-n))$ by Serre duality. Thus $b\circ a=0$ in the following diagram of exact triangles, enabling us to pick a (not necessarily unique) lift $c$ of $b$ and take (co)cones to complete it to
\begin{equation*}
 \xymatrix@=15pt{
F''[-2]\ar[r]\ar@{=}[d] &F'[-1]\ar[r]\ar[d]+<0ex,1.8ex>&F[-1]\ar[d]\\
F''[-2]\ar[r]^-a &\big[\cO_X(-n) \xrightarrow{s_0} F'\big]\ar[r]\ar[d]_b&E\ar[d]^c\\
&\cO_X(-n)\ar@{=}[r]&\cO_X(-n).
}
\end{equation*}
Since $F$ is an extension of $F'$ and $F''$ it lies in $\Coh_{\le1}(X)$, so the right hand column now shows any $\nu\_\theta$-semistable object $E$ can be written $\cO_X(-n)\rt sF$ when $\theta>\theta\_{\js}-1$. But for $\theta\in(\theta\_{\js}-1,\theta\_{\js})$ the $\cA$-injection $F[-1]\into\big[\cO_X(-n)\to F\big]= E$ destabilises $E$ because
$$
\nu\_{\theta}\(F[-1]\)\=\frac sc\ >\ \theta\=\nu\_\theta(E).
$$

Finally take $\theta>\theta_k$ beyond the final wall for $(1,c,s)$. The $\cA$-surjection $E\onto\cH^{-1}(E)= \cok s\;[-1]$ and the $\nu\_\theta$-semistability of $E$ gives
$$
\theta\=\nu\_\theta(E)\ \le\ \frac{\ch_3(\cok s)}{\ch_2(\cok s).H}\,.
$$
The lack of further walls means this holds for all $\theta>\theta_k$, so we conclude that $\ch_2(\cok s).H$ vanishes, so $\cok s$ has dimension 0.
Furthermore any dimension 0 subsheaf $G\into F$ would define a destabilising $\cA$-injection $G[-1] \hookrightarrow E$, so $F$ is also pure and $E(n)$ is a stable pair.
\medskip

Conversely we claim that twisting any stable pair by $\cO_X(-n)$ gives a $\nu\_{\theta}$-semistable object for $\theta>\theta_k$. So fix $E=\big[\cO_X(-n)\rt sF\big]$ of class $(1,c,s)$ with $F$ pure and $\dim\cok s=0$, and consider a $\nu\_\theta$-destabilising $\cA$-exact sequence
\beq{AB}
A\Into E\Onto B.
\eeq

If $\rk(A)=1$ then $\rk(B)=0$ so $B=\cH^1(B)[-1]\in\Coh_{\le1}(X)[-1]$. The long exact sequence of cohomology sheaves of \eqref{AB} shows $\cH^1(B)$ is a quotient of $\cok s$ and hence 0-dimensional. Thus $\nu\_\theta(B)=+\infty>\theta=\nu\_\theta(E)$ is not destabilising.

If $\rk(B)=1$ then $A\in\Coh_{\le1}(X)[-1]$ and its slope $\nu\_\theta(A)$ is constant in $\theta$. But there are no walls for $E$ beyond $\theta_k$, so \eqref{AB} destabilises for all $0\ll\theta=\nu\_\theta(E)\le\nu\_\theta(A)$. It follows that $\nu\_\theta(A)=+\infty$ and $A=Q[-1]$ for some sheaf $Q$ of dimension 0. Hence $\Hom(Q[-1],\cO_X(-n))=0$ and 
$$
\xymatrix@R=14pt{&Q[-1] \ar@{-->}[dl]_\exists\ar[dr]^0 \\
F[-1] \ar[r]& E \ar@{<-_)}[u]-<0ex,2ex>\ar[r]& \cO_X(-n)}
$$
defines a nonzero map $Q[-1]\to F[-1]$, contradicting the purity of $F$.
\end{proof}

\subsection*{Wall crossing}
By now the wall crossing argument, to express the one dimensional generalised DT invariants $\J(0,0,\beta,m)$ in terms of universal formulae of stable pair invariants, is familiar. We give a brief summary.

We choose a Chern character $(0,0,\beta,m)$ and work with complexes $E$ of
$$
\ch(E)\=\ch(\cO_X(-n))-(0,0,\beta,m)\ \text{ and so }\ \cl(E)\=(1,c,s)\ :=\ (1,\,\beta.H,\,m).
$$
We wall cross from the chamber $\theta\in(\theta_{\js}-1,\theta_{\js})$, where there are no semistable objects, to the large volume chamber $\theta\gg0$ where the semistable objects $E$ are all stable pairs, with counting invariant $P\_{\beta,\,m+n\beta.H}(X)$. As observed in the proof of Lemma \ref{propp}, on any wall we have destabilising factors $E_0,\,E_1$ of ranks $0,\,1$ respectively, both with smaller values of $c$ \eqref{two} and with $E_0$ a Gieseker semistable sheaf.

When $c=c_{\min}>0$ takes the minimal possible value this means the only possibility is that $\cl(E_0)=(0,c_{\min},s)$, so $\cl(E_1)=(1,0,0)$ and the wall is $\theta_{\js}$. It follows that $E_0$ is Gieseker stable and the wall crossing formula is the simplest one \eqref{JJJ}. Thus we find
\beq{easyJS}
P\_{\beta,\,m+n\beta.H}(X)\=(-1)^{m+n\beta.H-1}(m+n\beta.H)\,\J(0,0,\beta,m),
\eeq
expressing $\J(0,0,\beta,m)$ in terms of stable pair invariants.

Now take general $c>0$. On each wall the semistable factors $E_0,\,E_1$ either have the classes $(1,0,0),\,(0,c,s)$ again, or have \emph{strictly smaller $c$}. In the first case we are on $\theta_{\js}$ and again pick up the term on the right hand side of \eqref{easyJS}, plus higher order terms counting semistable factors of $E_0$; since these have smaller $c$ we may assume by induction they are written in terms of stable pair invariants.

In the second case both $E_0,\,E_1$ have strictly smaller $c>0$, and $\theta$ is to the right of the Joyce-Song wall of $E_1$ by \eqref{seesaw}. By induction we may assume both $\J(0,0,\beta',m')$ and $\J_\theta(1,\beta',m')$ can be written in terms of stable pair invariants for all $\beta'.H<c$ and $\theta$ to the right of the Joyce-Song wall of $(1,\beta',m')$, since the base case $\beta'.H=c_{\min}$ was shown above. After finitely many walls, we reach the large volume chamber where we get the stable pair invariant $P\_{\beta,\,m+n\beta.H}(X)$ equated with the right hand side of \eqref{easyJS} plus a function of stable pair invariants with smaller $c$. Rearranging --- and rewriting the stable pair invariants in terms of counts of ideal sheaves using the DT/PT wall crossing proved in \cite{BrDTPT, TodaDTPT}
 --- proves Theorem \ref{2} for dimension 1 classes.\medskip

Finally we note that the working of \cite[Section 6.3]{JS} shows that the same technique of using Joyce-Song pairs also works for dimension 0 sheaves.

\appendix
\section{Stable pairs}\label{appendix}
We explain here some results of Toda. Though they are all proved in \cite[Section 3]{TodaBG} they are not stated in the generality we need, so we give a brief account (with slightly different proofs) for completeness.

\begin{Def}
	A pair $(F,s)$ consisting of a 1-dimensional sheaf $F$ and a section $s\in H^0(F)$ is called a \emph{stable pair} \cite{PT} if 
	\begin{itemize}
		\item $F$ is a \emph{pure} 1-dimensional sheaf, i.e. it has no 0-dimensional subsheaves, and 
		\item $s \colon \cO_X \rightarrow F$ has zero-dimensional cokernel. 
	\end{itemize}
\end{Def}

We often abuse notation and call the 2-term complex $\cO_X\rt sF$ (with $\cO_X$ in degree 0) a stable pair. We also use the notation $E^\vee:=R\hom(E,\cO_X)$ for derived dual.

	\begin{Lem}\label{lem1}
	Suppose $E \in \Ab$ is a $\nu\_{b,w}$-semistable object of rank $-1$ with $\nubw(E)<+\infty$ and $w \gg 0$. Set $L:=(\det E)^{-1}$. Then
$$
E^\vee\otimes L[1]\,\text{ is a stable pair}.
$$	
	\end{Lem}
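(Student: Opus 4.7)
The plan follows the strategy of \cite[Section 3]{TodaBG}, adapted to our setting: first pin down the structure of $E$, then apply derived duality.

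First I would show that semistability for $w\gg 0$ forces $\cH^{-1}(E)$ to be a line bundle and $\cH^0(E)$ to be a pure $1$-dimensional sheaf (or zero). Torsion-freeness and rank $1$ of $\cH^{-1}(E)$ are automatic from $E\in\Ab$. The subobject $\cH^{-1}(E)[1]\into E$ in $\Ab$ has $\nubw$-slope growing linearly in $w$, so for $w\gg 0$ it exceeds the bounded slope of any $2$-dimensional torsion piece of $\cH^0(E)$; this forces $\dim\supp\cH^0(E)\le 1$. Reflexivity of $F:=\cH^{-1}(E)$---hence, for a rank-$1$ sheaf on smooth $X$, its being a line bundle $L$---follows by comparing $E$ with $E':=\mathrm{cone}(Q[-1]\to L[1])$ for $Q:=\cH^0(E)$: since $T:=L/F$ has codimension $\ge 2$ we get $\mu_H(L)=\mu_H(F)\le b$ and hence $E'\in\Ab$. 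Rotating the natural triangle $E\to E'\to T[1]\to E[1]$ gives a short exact sequence $0\to T\to E\to E'\to 0$ in $\Ab$, so the subobject $T$ of $\nubw$-slope $+\infty$ strictly destabilises $E$ unless $T=0$. An analogous argument rules out a $0$-dim subsheaf of $Q$.

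Next, with $E=\mathrm{cone}(Q[-1]\to L[1])$ where $L$ is a line bundle and $Q$ is pure $1$-dim, applying $(-)^\vee\otimes L[1]$ to the triangle $L[1]\to E\to Q\to L[2]$ yields
\[
Q^\vee\otimes L[1]\To E^\vee\otimes L[1]\To\cO_X\xrightarrow{\ s\ }Q^\vee\otimes L[2].
\]
Since $Q$ is pure $1$-dim Cohen--Macaulay on the smooth $3$-fold $X$, the local derived dual $Q^\vee$ is the single pure $1$-dim sheaf $\ext^2(Q,\cO_X)$ shifted into degree $2$. A short long-exact-sequence chase then identifies the cohomology of $E^\vee\otimes L[1]$ as $\cH^0=\ker s$, a subsheaf of $\cO_X$ (hence an ideal sheaf $I_C$), $\cH^1=\cok s$, and $\cH^i=0$ for $i\ne 0,1$.

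It remains to check $\cok s$ is $0$-dim. If $s=0$ then $E\cong L[1]\oplus Q$ is destabilised by $Q\into E$ (slope $+\infty$ versus finite), contradicting semistability. More generally, if $\im s$ lies in a proper pure $1$-dim subsheaf of $\ext^2(Q,\cO_X)\otimes L$, dualising back exhibits a subobject of $E$ with strictly larger slope for $w\gg 0$, again violating semistability. Thus $E^\vee\otimes L[1]=[\cO_X\xrightarrow{\,s\,}\ext^2(Q,\cO_X)\otimes L]$ is a PT stable pair. The main technical obstacle I expect is in the structure stage: while the rotation-to-SES trick is clean for the cokernel $T$, ruling out a $0$-dim subsheaf $Q_0\subset Q$ requires lifting $Q_0$ to a subobject of $E$ through a possibly nonzero $\Ext^2(Q_0,L)$-obstruction, which must itself be eliminated by exploiting semistability.
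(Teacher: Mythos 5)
Your proposal is correct in outline and reaches the same endpoint, but it reorders the argument in a way the paper deliberately avoids: you establish purity of $Q=\cH^0(E)$ \emph{before} dualising, whereas the paper never proves $\cH^0(E)$ is pure at all. Instead it dualises immediately, so that $F:=\cH^0(E)^\vee\otimes L[2]$ has $\cH^0(F)=\ext^2(\cH^0(E)\otimes L^{-1},\cO_X)$ automatically pure $1$-dimensional and $\cH^1(F)=\ext^3(\cH^0(E)\otimes L^{-1},\cO_X)$ $0$-dimensional, and then kills both $\cH^2(I\udot)$ and the $1$-dimensional part of $\cok s$ using the single vanishing $\Hom(G,E)=0$ for every $G\in\Coh_{\le1}(X)$ (applied to $G=\cO_x$ and to $G=G_0^\vee\otimes L[2]$ for $G_0$ pure $1$-dimensional). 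What your route buys is a cleaner statement of the structure of $E$ itself; what the paper's route buys is never having to lift subsheaves of $\cH^0(E)$ through the extension by $L[1]$. The identification of $\cH^{-1}(E)$ with a line bundle via the torsion quotient $T=L/F$ of slope $+\infty$ is the same in both arguments, as is the final dualisation.

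On the step you flag as the main technical obstacle: it closes, but not by ``exploiting semistability'' as you guess. Given a $0$-dimensional subsheaf $Q_0\subset Q$, the obstruction to lifting $Q_0\to Q$ through $E\onto Q$ is the composite $Q_0\to Q\to L[2]$, an element of $\Ext^2(Q_0,L)\cong H^1(X,Q_0\otimes L^{-1}\otimes\omega_X)^*$ by Serre duality, which vanishes because $Q_0$ is $0$-dimensional. So the lift exists unconditionally, and only \emph{then} does semistability enter, via $\Hom(Q_0,E)=0$. Two smaller imprecisions: rank $1$ of $\cH^{-1}(E)$ is not ``automatic from $E\in\Ab$'' (only torsion-freeness is); it follows from your subsequent slope comparison forcing $\cH^0(E)\in\Coh_{\le1}(X)$, so state it in that order. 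And in the last step, rather than arguing about $\im s$ landing in a proper pure subsheaf, it is cleaner to note that $\cok s$ is a quotient of the pure $1$-dimensional sheaf $\ext^2(Q,\cO_X)\otimes L$, so if it were not $0$-dimensional it would admit a nonzero map to a pure $1$-dimensional sheaf $G$, contradicting $\Hom(I\udot[1],G)=\Hom(E^\vee,(G^\vee\otimes L[2])^\vee)=0$.
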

	
	\begin{proof}
    By \cite[Lemma 2.7(c)(iii)]{BMS} we have $\cH^{0}(E) \in \Coh_{\leq 1}(X)$ while $\cH^{-1}(E)$ is a $\mu\_H$-semistable sheaf. In particular $\cH^{-1}(E)$ is torsion free, and rank 1 by Lemma \ref{lem.destabilising objects}. Thus it is $L\otimes I_Z$ for some subscheme $Z\subset X$ of dimension $\le1$ and line bundle $L$ --- the double dual of $\cH^{-1}(E)$, which also equals $\det\!\(E[1]\)$ since $Z$ and $\cH^0(E)$ have dimension $\le$ 1.
    
    Since $\mu\_H(L)=\mu\_H\(\cH^{-1}(E)\)$ the shifts by $[1]$ of both sheaves are in $\Ab$. This gives two $\Ab$ short exact sequences
	\begin{equation}\label{sequences}
     	L\otimes\cO_Z \Into\cH^{-1}(E)[1]\Onto L[1] \qquad\text{and}\qquad \cH^{-1}(E)[1]\Into E\Onto\cH^0(E)
	\end{equation}
and so the $\Ab$-injection $L\otimes\cO_Z\into E$	. Since $\nu\_{b,w}(L\otimes\cO_Z) = +\infty$ this contradicts the $\nubw$-semistability of $E$ unless $Z=\emptyset$. Thus $\cH^{-1}(E)=L$.\medskip
	
The derived dual of the second exact sequence of \eqref{sequences} is the exact triangle
$$
\cH^0(E)^\vee\To E^\vee\To L^{-1}[-1].
$$
Setting $F:=\cH^0(E)^\vee\otimes L[2]$ and $I\udot:=E^\vee\otimes L[1]$ this gives
\beq{tria}
I\udot\To\cO_X\To F.
\eeq
Since $\dim\cH^0(E)\le1$, we know $\cH^0(F)=\ext^2(\cH^0(E)\otimes L^{-1}, \cO_X)$ is a pure 1-dimensional sheaf, $\cH^1(F)=\ext^3(\cH^0(E)\otimes L^{-1}, \cO_X)$ is a 0-dimensional sheaf, and the other $\cH^i(F)$ vanish. The long exact sequence of cohomology sheaves of \eqref{tria} then shows that
\beq{malv}
\cH^{\ge3}(I\udot)\,=\,0\ \text{ and }\ \cH^2(I\udot)\,=\,\cH^1(F)\ \text{ is 0-dimensional.}
\eeq

Now we use the $\nubw$-semistability of $E\in\Ab$ and $\nubw(E)<+\infty$ to deduce
\beq{homfe}
\Hom(G,E)\=0\=\Hom(E^\vee,G^\vee)\ \text{ for any }\ G\,\in\,\mathrm{Coh}_{\le1}(X).
\eeq
Setting $G=\cO_x$ for any point $x\in X$ gives
$$
0\=\Hom\!\(I\udot\otimes L^{-1}[-1],\cO_x[-3]\)\ \cong\ \Hom(I\udot[2],\cO_x)\=\Hom\!\(\cH^2(I\udot),\cO_x\),
$$
where the last equality follows from $\cH^{\ge3}(I\udot)=0$ \eqref{malv}. Combined with the second part of \eqref{malv} this gives $\cH^2(I\udot)=0$, so in fact
\beq{ge2}
\cH^{\ge2}(I\udot)\,=\,0\,\text{ and }\,F\,\text{ is a pure 1-dimensional sheaf}.
\eeq
Now let $G$ be any pure 1-dimensional sheaf. Then $G^\vee\otimes L[2]$ is also a pure 1-dimensional sheaf, which we may also subsitute into \eqref{homfe} in place of $G$. This gives
\beq{end}
0\=\Hom\!\(E^\vee,(G^\vee\otimes L[2])^\vee\)\=\Hom(I\udot[1],G)\ \stackrel{\eqref{ge2}}=\ \Hom\!\(\cH^1(I\udot),G\).
\eeq
But by \eqref{tria} $\cH^1(I\udot)$ is a quotient of a pure 1-dimensional sheaf $F$, so by \eqref{end} it must be 0-dimensional. Thus by \eqref{tria}, $I\udot$ is the cocone of a map $\cO_X\to F$ with 0-dimensional cokernel, i.e. it is a stable pair.
  \end{proof}
  
For the converse let $(F,s)$ be a stable pair and $I\udot=\{\cO_X\rt sF\}$ the associated complex.

  \begin{Lem}\label{lem2}
For any line bundle $L$ take $b> L.H^2/H^3$ and $w \gg 0$. Then 
$$
(I\udot)^\vee\otimes L[1]\,\in\,\Ab\,\text{ and is }\,\nu\_{b,w}\text{-stable}.
$$ 
\end{Lem}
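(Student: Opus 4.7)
The plan has two parts: (1) check $E\in\cA_b$; (2) check $\nu_{b,w}$-stability for $w\gg0$.

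For (1), I would compute the cohomology sheaves of $E$ by applying $R\hom(-,\cO_X)\otimes L[1]$ to the defining triangle $I\udot\to\cO_X\xrightarrow{s} F\to I\udot[1]$. Using the purity of $F$, which gives $F^\vee=G[-2]$ with $G:=\cE xt^2(F,\cO_X)$ itself a pure one-dimensional sheaf (a standard consequence of depth-theoretic characterisations of purity), the long exact sequence of cohomology sheaves yields $\cH^{-1}(E)=L$ and $\cH^0(E)=G\otimes L$, with all other cohomology vanishing. Then $\mu_H^+(\cH^{-1}(E))=L.H^2/H^3<b$ by assumption, and $\mu_H^-(\cH^0(E))=+\infty>b$ since $G\otimes L$ is torsion, so $E\in\cA_b$ by \eqref{Abdef}.

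For (2), I would argue by contradiction: assume $A\hookrightarrow E\twoheadrightarrow B$ in $\cA_b$ is destabilising, with $\nu_{b,w}(A)\geq\nu_{b,w}(B)$ for some $w\gg0$. The long exact sequence of cohomology sheaves, combined with torsion-freeness of the $\cH^{-1}$ terms (forced by membership in $\cA_b$) and the fact that $\cH^{-1}(E)=L$ is a line bundle, constrains the ranks: either $\cH^{-1}(A)=0$ (so $\rk A\geq0$) or $\cH^{-1}(A)=L$ (so $\rk A=-1$ and $\rk B=0$). For $w\gg0$, subobjects $A$ of positive rank have $\nu_{b,w}(A)\to-\infty$, while rank-zero subobjects with $\ch_1(A).H^2>0$ give bounded $\nu_{b,w}(A)$; on the other hand $\nu_{b,w}(B)\to+\infty$ since $\rk B<0$. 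This rules out the first case except possibly a degenerate sub-torsion-sheaf of $\cH^0(E)$ of $\leq 1$-dimensional support.

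The main obstacle is the remaining case where $A$ is a nontrivial $\cA_b$-sub of $E$ corresponding (via the $\cA_b$-short exact sequence $L[1]\hookrightarrow E\twoheadrightarrow G\otimes L$) to a proper torsion subsheaf $K$ of $G\otimes L$, with $B$ a nonzero torsion quotient. To handle this I would apply the anti-equivalence $R\hom(-,\cO_X)\otimes L[1]$, which sends $E$ back to $I\udot$ and the destabilising sequence to a triangle $B^\vee\otimes L[1]\to I\udot\to A^\vee\otimes L[1]$ in $\cD(X)$. Tracing the cohomology sheaves of $B^\vee\otimes L[1]$ (concentrated in degrees depending on $\dim\supp B$, then shifted), a nonzero $B$ produces either a nonzero $0$-dimensional subsheaf of $F$ (contradicting purity of $F$) or a positive-dimensional piece of $\cok s$ (contradicting the stable pair condition), giving the contradiction. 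This final step essentially inverts the argument of Lemma \ref{lem1}, and the bookkeeping of cohomology sheaves under the duality is the technical heart of the proof.
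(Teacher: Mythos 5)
Your overall strategy coincides with the paper's: establish membership of $\Ab$ from the triangle $L[1]\to E\to L\otimes F^\vee[2]$, rule out destabilisers by rank, and derive the final contradiction by dualising the destabilising sequence back to a statement about $F$ and $\cok s$. Part (1) is fine. But in part (2) there is a genuine gap at the step ``rank-zero subobjects with $\ch_1(A).H^2>0$ give bounded $\nu_{b,w}(A)$''. For rank-zero $A$ the slope $\nu_{b,w}(A)=\ch_2(A).H\big/\ch_1^{bH}(A).H^2$ is independent of $w$, but its numerator is not a priori bounded over all subobjects $A\into E$ in $\Ab$: even with $\ch_1(A).H^2$ bounded, a subobject supported on a divisor can have arbitrarily large $\ch_2(A).H$ unless one controls $\ch_2$ of the quotient $B$ from below. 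The paper obtains this control by observing that $\cH^{-1}(B)$ is a rank-one torsion-free sheaf $L''\otimes I_{Z''}$ with $L.H^2\le L''.H^2\le bH^3$ and applying the Hodge index theorem to bound $\ch_2(B).H$, and only then chooses $w$ large enough (its condition (ii)) to exclude this case. Without that input the divisor-supported case is not excluded. A related, smaller issue is uniformity: your limits ``$\nu_{b,w}(A)\to-\infty$'' and ``$\nu_{b,w}(B)\to+\infty$'' hold for a fixed class, whereas the destabiliser may vary with $w$; the paper first invokes the wall-and-chamber structure to reduce to a single $(b,w)$ and then uses the Bogomolov inequality \eqref{BOG} to make the positive-rank case uniform (its condition (iii)).

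A second inaccuracy: you describe the remaining case as ``$A$ corresponding to a proper torsion subsheaf of $G\otimes L$''. A rank-zero subobject $A\into E$ in $\Ab$ need not inject into $\cH^0(E)$ --- the composite $A=\cH^0(A)\to\cH^0(E)$ can have kernel a quotient of the torsion-free sheaf $\cH^{-1}(B)$. The paper therefore has to prove separately that $A$ injects into the pure sheaf $L\otimes F^\vee[2]$ (the $\ker\gamma=0$ argument), and this is what guarantees $A$ is \emph{pure} one-dimensional, hence that $\cH^2(A^\vee\otimes L)$ is a nonzero one-dimensional sheaf receiving a surjection from the zero-dimensional $\cok(s)$ --- the final contradiction. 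That last step also requires $\cH^{\ge2}\big((B\otimes L^{-1})^\vee[1]\big)=0$, which the paper imports from \cite[Proposition 5.1.3(b)]{BMT}; your sketch of the duality bookkeeping needs this vanishing to conclude.
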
  
    \begin{proof} The dual\,$\otimes L[1]$ of the exact triangle $I\udot\to\cO_X\to F$ gives 
\begin{equation}\label{L[1]}
L [1]\To L \otimes (I\udot)^\vee[1] \To L \otimes F^\vee[2].
\end{equation}
Since $F$ is a pure 1-dimensional sheaf, so is $L\otimes F^\vee[2]$. So the exact triangle shows the rank $-1$ complex $E:=L \otimes (I\udot)^\vee[1]$ lies in $\Ab$ \eqref{Abdef} when $b>\mu\_H(L)=L.H^2/H^3$.

By the wall and chamber structure of Proposition \ref{prop. locally finite set of walls} and the primitivity of $\ch(E)$ it is sufficient to prove that $E$ is $\nubw$-semistable for a fixed $b > \max\(0, \mu\_H(L)\)$ and $w\gg0$. Since $\rk(E)=-1$ the slope $\nubw(E)$ is linear and increasing in $w$. So take $w$ satisfying
\begin{enumerate}
\item[(i)] $\nubw(E)\,>\,0$,
\item[(ii)] $\nubw(E)\,>\,\ch_2(E).H+\tfrac1{2H^3}\max\!\((bH^3)^2,\,(L.H^2)^2\)$, and
\item[(iii)] $w\,>\,\tfrac12\(\!\ch_1^{bH}(E).H^2\)^2+b\ch_1^{bH}(E).H^2+\tfrac12b^2$,
\end{enumerate}
and suppose for a contradiction that $E$ is strictly $\nubw$-unstable. Taking the first term of its $\nubw$-Harder-Narasimhan filtration gives an $\Ab$-exact sequence $E' \hookrightarrow E \twoheadrightarrow E''$ with $\nubw$-semistable $E'$ and
	\begin{equation}\label{order-slope}
	\nubw(E')\ >\ \nubw(E)\ \ge\ \nubw(E'').
	\end{equation}
Thus $\cH^0(E'')$ is a quotient of the 1-dimensional sheaf $\cH^0(E)$, so it is supported in dimension $\leq 1$. If $\cH^{-1}(E'')$ also has rank 0 then it vanishes by \eqref{Abdef}, so $\rk(E'')=0$ and $\nubw(E'')=+\infty$, contradicting \eqref{order-slope}. We conclude $\rk(E'')<0$ and $\rk(E')\ge0$.

Suppose first that $\rk(E')>0$. Then $\rk(\cH^0(E'))>0$ so \eqref{65} gives $\ch_1^{bH}(E').H^2>0$. By (i) and \eqref{order-slope} we also have $\nubw(E')>0$. So their product is also positive,
\beqa
0 &<& \ch_2(E').H-w\ch_0(E')H^3 \\
&\stackrel{\eqref{BOG}}\le& \frac{(\ch_1(E').H^2)^2}{2\ch_0(E')H^3}-w\ch_0(E')H^3 \\
&=& \frac{(\ch_1^{bH}(E').H^2)^2}{2\ch_0(E')H^3}+b\ch_1^{bH}(E').H^2+\tfrac12b^2\ch_0(E')H^3-w\ch_0(E')H^3 \\
&\le& \tfrac12(\ch_1^{bH}(E).H^2)^2+b\ch_1^{bH}(E).H^2+\tfrac12b^2\ch_0(E')H^3-w\ch_0(E')H^3.
\eeqa
Therefore $w<\frac12(\ch_1^{bH}(E).H^2)^2+b\ch_1^{bH}(E).H^2+\tfrac12b^2$, which contradicts (iii).

So $\rk(E')=0$ and $\rk(E'')=-1$. We claim that $E'$ is a sheaf. If not then since $\cH^{-1}(E')$ and $\cH^{-1}(E'')$ are torsion free \eqref{Abdef} and $\rk\!\(\cH^{-1}(E)\)=1$, the exact sequence
$$
	0\To\cH^{-1}(E')\rt{\alpha}\cH^{-1}(E)\rt{\beta}\cH^{-1}(E'')\To\cH^0(E')\To\cH^0(E)\To\cH^0(E'')\To0
$$
shows the map $\alpha$ is an isomorphism and $\beta$ is 0. Since $\cH^0(E)$ has dimension $\le1$ the rest of this sequence then shows that $\mu\_H\(\cH^{-1}(E'')\)=\mu\_H\(\cH^0(E')\)$, which is impossible by the definition \eqref{Abdef} of $\Ab$.

So $E'$ is a rank 0 sheaf. The inclusion $\cH^{-1}(E) \hookrightarrow \cH^{-1}(E'')$ implies the rank 1 torsion free sheaf $\cH^{-1}(E'')$ is $L''\otimes I_{Z''}$ for some $Z''$ of $\dim\le1$ and line bundle $L''$ with 
	\begin{equation*}
	L.H^2\ \leq\ L''.H^2\ \stackrel{\eqref{Abdef}}\leq\ bH^3. 
	\end{equation*} 
The Hodge index theorem $\((L'')^2.H\)(H^3)\le(L''.H^2)^2$ and $\ch_2\!\(\cH^0(E'')\).H\ge0$ then give
	\begin{equation*}
	\ch_2(E'').H\ \ge\ -\ch_2(L''\otimes I_{Z''}).H\ \ge\ -\tfrac12(L'')^2.H\ \geq\ - \tfrac{1}{2H^3} \max\((bH^3)^2 \,,\,(L.H^2)^2\).
	\end{equation*}
If $\ch_1(E').H^2>0$ then we get the following contradiction to (ii) and \eqref{order-slope},
	$$
\nubw(E')\=\frac{\ch_2(E').H}{\ch_1(E').H^2}\ \le\ \ch_2(E').H\ \le\ \ch_2(E).H+\tfrac1{2H^3}\max\((bH^3)^2 \,,\,(L.H^2)^2\).
	$$
	So in fact the rank 0 sheaf $E'$ is supported in dimension $\le1$. \medskip
	
	From \eqref{L[1]} we get the following diagram in $\Ab$,
$$
\xymatrix@R=18pt{
L[1]\ \ar@{<-_)}[d]+<0pt,9pt>\ar@{^(->}[r]<-.2ex>& L \otimes (I\udot)^\vee[1] \ar@{<-_)}[d]+<0pt,10pt><.5ex>\ar@{->>}[r]&  L \otimes F^\vee[2] \ar@{<-_)}[d]+<0pt,9pt> \\
\ker\gamma\ \ar@{^(->}[r]<-.2ex>& E' \ar[ur]_(.45){\gamma}\ar@{->>}[r]& \im\gamma.}
$$
Since $\Ab$-subobjects of sheaves are sheaves, both $\ker\gamma$ and $\im\gamma$ are sheaves. Thus the exact sequence of cohomology sheaves of the lower row shows $\ker\gamma$ is a \emph{subsheaf} of $E'$, so it has dimension $\le1$. The $\Ab$-cokernel of the left hand injection has torsion free $\cH^{-1}$ sitting in an exact sequence of sheaves $0\to L\to\cH^{-1}\to\ker\gamma\to0$, which forces $\ker\gamma=0$.

Thus we have the $\Ab$-injection $\gamma\colon E'\into L \otimes F^\vee[2]$. Hence $\cH^{-1}(\cok\gamma)$ is both torsion free and a subsheaf of $E'$; it is therefore zero and $E'\into L \otimes F^\vee[2]$ is also an injection of sheaves. We deduce $E'$ is a pure 1-dimensional sheaf.

The dual\,$\otimes\;L[1]$ of $E'\into E\onto E''$ gives
\beq{exact-dual}
(E''\otimes L^{-1})^\vee[1]\To I\udot\To(E')^\vee\otimes L[1].
\eeq
Since $E''\in\Ab$ we have $E'' \otimes L^{-1}\in\cA_{\;b'}$, where $b' = b - L.H^2/H^3$. And $\nu^{\max}_{b,w}(E'')<+\infty$
by the definition of $E''$ via the $\nubw$-Harder-Narasimhan filtration, so $\nu^{\max}_{b', w}(E'' \otimes L^{-1})<+\infty$.
Therefore \cite[Proposition 5.1.3(b)]{BMT} applies to show $\cH^{\ge2}\((E'' \otimes L^{-1})^\vee[1]\) = 0$. Then the exact sequence of cohomology sheaves of \eqref{exact-dual} gives a surjection
$$
\cok(s) \Onto \cH^{2}\((E')^\vee\otimes L\)
$$
from a 0-dimensional sheaf to a nonzero 1-dimensional sheaf, a contradiction.
\end{proof}

The bijection of sets defined by Lemmas \ref{lem1} and \ref{lem2} can be upgraded to an isomorphism of moduli stacks. There is a fine moduli space $P$ of stable pairs of fixed Chern character. It admits a universal stable pair on $P\times X$, flat over $P$. Taking its derived dual and twisting by $L[1]$ defines a perfect complex over $P\times X$. By Lemma \ref{lem2} its restriction to any $\{$point$\}\times X$ is a $\nu\_{b,w}$-stable complex for $bH^3>L.H^2$ and $w\gg0$.

These complexes all have the same class $v\in K(X)$. There is an algebraic moduli stack of finite type $\cM_{b,w}(v)$ of $\nubw$-semistable objects in this class \cite[Theorem C.5]{FT3}.

The perfect complex is classified by a map
$P\to\cM_{b,w}(v)$ which factors through $P/\C^*$. (Here the $\C^*$ acts trivially on $P$ but scales the complex.) Twisting further by elements of $\Pic\_0(X)$ defines a map
\beq{stacks}
P/\C^*\times\Pic\_0(X)\To\cM_{b,w}(v)
\eeq
which is a bijection by Lemmas \ref{lem1} and \ref{lem2}. But deformations of stable pairs $(F,s)$ and the complexes $(I\udot)^\vee\otimes L[1]$ coincide to all orders \cite[Theorem 2.7]{PT}, so the bijection induces an isomorphism $P\times\Pic\_0(X)\to M_{b,w}(v)$ on coarse moduli spaces. Since the stabiliser groups are $\C^*$ on both sides, we deduce that \eqref{stacks} is an isomorphism of stacks.

Hence, for $bH^3>L.H^2$, the invariants $\J_{b,\infty}(v)$ equal the stable pair invariants of \cite{PT} multiplied by $\#H^2(X,\Z)_{\mathrm{tors}}$.

%
%
%
%
%
%
%

\bibliographystyle{halphanum}
\bibliography{references}

\bigskip \noindent
{\tt{s.feyzbakhsh@imperial.ac.uk\\ richard.thomas@imperial.ac.uk}}\medskip

\noindent Department of Mathematics\\
\noindent Imperial College\\
\noindent London SW7 2AZ \\
\noindent United Kingdom

\end{document}